\documentclass[11pt,oneside,english]{amsart}
\usepackage{subfiles, mathtools}
\usepackage{amssymb}
\usepackage[colorlinks]{hyperref}
\usepackage{graphicx}
\usepackage[pagewise]{lineno}
\usepackage{ulem}

\makeatletter \theoremstyle{plain}
 \newtheorem{thm}{Theorem}[section]
 \newtheorem{lem}[thm]{Lemma}
 \newtheorem{prop}[thm]{Proposition}
 
 \numberwithin{equation}{section} 
 \numberwithin{figure}{section} 
 \theoremstyle{plain}
 \theoremstyle{definition}
 \newtheorem{defn}[thm]{Definition}
 \newtheorem{rem}[thm]{Remark}
  \newtheorem{cor}[thm]{Corollary}
  \newtheorem{ex}[thm]{Example}

\newcommand{\bH}{{{\bf H}}}

\newcommand{\Aa}{{\mathcal {AA}}}

\newcommand{\calH}{{{\mathcal H}}}

\newcommand{\C}{{{\mathbb C}}}

\newcommand{\R}{{{\mathbb R}}}

\newcommand{\mH}{{{\mathbb H}}}

\usepackage{babel}

\makeatother

\newcommand\blfootnote[1]{%
  \begingroup
  \renewcommand\thefootnote{}\footnote{#1}%
  \addtocounter{footnote}{-1}%
  \endgroup
}

\providecommand{\subjclass}[1]
{
  \small	
  {\textit{$2020$ Mathematics Subject Classifications.}} #1
}

\begin{document}

\title [Horizontal curvatures]{Horizontal curvatures of surfaces in 3D contact sub-Riemannian Lie groups}

\author{Elia Bubani \& Andrea Pinamonti \& Ioannis D. Platis \& Dimitrios Tsolis}

\newcommand{\Addresses}{{
  \bigskip
  \footnotesize

  E.~Bubani,  \textsc{University of Fribourg, Department of Mathematics, 
Ch. du Musée 23,
1700 Fribourg,
Switzerland.}\par\nopagebreak
  \textit{E-mail address}, E.~Bubani: \texttt{elia.bubani@unifr.ch}

  \medskip

  A.~Pinamonti (Corresponding author), \textsc{Department of Mathematics, University of Trento, via Sommarive 14, 38123 Povo, Italy}\par\nopagebreak
  \textit{E-mail address}, A.~Pinamonti: \texttt{andrea.pinamonti@unitn.it}

  \medskip

  I.D.~Platis, \textsc{Department of Mathematics, University of Patras, Panepistimioupolis, 26504 Rion, Achaia, Greece}\par\nopagebreak
  \textit{E-mail address}, I.D.~Platis: \texttt{idplatis@upatras.gr}

  \medskip
  D.~Tsolis, \textsc{Department of Mathematics, University of Patras, Panepistimioupolis, 26504 Rion, Achaia, Greece}\par\nopagebreak
  \textit{E-mail address}, D.~Tsolis: \texttt{d.tsolis@upatras.gr}

}}

\date{\today}

\begin{abstract}
In this paper we study horizontal curvatures for surfaces embedded in three-dimensional contact sub-Riemannian Lie groups. Using a Riemannian approximation scheme, we derive explicit formulas for horizontal Gauss curvature, horizontal measn curvature, and symplectic distortion for surfaces embedded in three dimensional Lie groups with a sub-Riemannian structure obtained by a contact form. We focus on two primary examples: the Heisenberg group and the affine-additive group. We classify surfaces of revolution within these groups that exhibit constant horizontal curvatures, often expressing their profiles through elementary or elliptic integrals. 
\end{abstract}
\maketitle

\blfootnote{\subjclass{53C17, 53D10}}
\blfootnote{\thanks{The first author was supported by the Swiss National Science Foundation, Grants Nr. 191978, 10001161 and  204501. The second author is a member of the Istituto Nazionale di Alta Matematica (INdAM), Gruppo Nazionale per l'Analisi Matematica, la Probabilità e le loro Applicazioni (GNAMPA), and is supported by the University of Trento, the MIUR-PRIN 2022 Project \textit{Regularity problems in sub-Riemannian structures}  Project code: 2022F4F2LH and the INdAM-GNAMPA 2025 Project \textit{Structure of sub-Riemannian hypersurfaces in Heisenberg groups}, CUP ES324001950001. The third author was supported by the Medicus program, Grant Nr. 83765.}}

\tableofcontents

\section{Introduction}

The study of surfaces and their various curvatures lies at the heart of differential geometry, with profound implications in analysis, geometric measure theory, and mathematical physics. In the general Riemannian setting, classical notions of mean and Gaussian curvature are well-established and provide deep insight into the local and global behaviour of embedded hypersurfaces. However, many contemporary problems in geometric analysis and sub-elliptic PDE arise in settings where the underlying geometry is not Riemannian but sub-Riemannian; here, the ambient metric is inherently anisotropic.

Sub-Riemannian spaces model constrained motion: admissible curves are required to be tangent to a distinguished, typically non-integrable, \textit{horizontal} distribution. Such geometries arise naturally in numerous areas inside mathematics and natural sciences in general. A particularly rich and flexible class is provided by \textit{contact sub-Riemannian manifolds}, where the horizontal distribution is realised as the kernel of a contact form and is complemented by the Reeb vector field. Among them, \textit{three-dimensional contact sub-Riemannian Lie groups} are exceptional, as they are sufficiently structured to allow explicit computations and general enough to capture many phenomena that appear in higher dimensions.

A fundamental challenge in this setting is to develop intrinsic notions of curvature for embedded surfaces that are genuinely adapted to the sub-Riemannian structure. Classical Riemannian definitions depend on the full ambient metric and the Levi-Civita connection (see \cite{M-76}); neither of which is available when the metric is defined only on the horizontal distribution. In particular, surface geometry must account for the presence of \textit{characteristic points}, where the tangent plane aligns with the horizontal distribution and the horizontal normal degenerates. Therefore, a satisfactory definition of curvature should: a) coincide with the Riemannian one whenever the sub-Riemannian structure is induced by a Riemannian metric,b)  be invariant under the natural isometry group of the sub-Riemannian geometry (e.g.\ left translations), c) encode the non-holonomic constraints imposed by the horizontal distribution and d) reduce to the known expressions in model spaces such as the Heisenberg group.

In this paper we provide a systematic treatment of curvature for surfaces embedded in three-dimensional contact sub-Riemannian Lie groups. We focus on two characteristic examples: the Heisenberg group and the affine-additive group. Each carries a left-invariant contact structure given by a contact form, its Reeb vector field, and a metric on the horizontal bundle. Our approach is based on a \textit{Riemannian approximation scheme}: we introduce a one-parameter family of Riemannian metrics that converges to the sub-Riemannian metric in the Gromov-Hausdorff sense (see Section \ref{sec:Riem-approx}). Within this family we employ Cartan's method of moving frames: for an embedded surface, we compute the Riemannian second fundamental form and the ambient Gauss curvatures in an adapted orthonormal frame. A careful asymptotic analysis of these quantities as the approximation parameter tends to zero produces natural sub-Riemannian limits and leads to intrinsic definitions of the \textit{horizontal mean curvature} $H_{\Sigma}^h$ and the \textit{horizontal Gauss curvature} $K_{\Sigma}^h$ of a surface $\Sigma$, away from its characteristic set (where the horizontal gradient vanishes), see Section \ref{sec:not-curv}.

After providing the proper definitions, we focus in obtaining explicit formulae that are amenable to applications and classification problems. Together with $H_{\Sigma}^h$ and $K_{\Sigma}^h$ we also analyse the \textit{symplectic distortion} $Q_{\Sigma}^h$, which captures the way the surface geometry is twisted relative to the ambient contact structure and plays an important role in the geometric identities emerging from the approximation scheme. We apply these formulae to make explicit computations for several natural classes of surfaces. Moreover, we obtain classification results for surfaces of revolution with constant horizontal Gauss curvature, constant horizontal mean curvature, and constant symplectic distortion, deriving explicit formulae.

Our results connect with, and extend, several earlier contributions on curvature in sub-Riemannian geometries. In the Heisenberg group, notions of sub-Riemannian Gaussian curvature and related Gauss-Bonnet type identities have been developed in \cite{BTV,DV-GaussBonnet}. Further extensions to contact manifolds and general sub-Riemannian settings appear in \cite{V-GBgeneral,GHVM-GaussBonnet,BBP-GaussBonnet}. The present definition of horizontal mean curvature is compatible with known formulae in the Heisenberg group (see \cite{CDPT}). Moreover, inspired by classical investigations in Euclidean space (e.g.\ \cite{Massey}) and by recent studies of rotationally invariant surfaces and constant-curvature problems in Heisenberg groups (see \cite{P-SRS,RR-Rot.inv.,Vel-RSCC1,Vel-RSCC2}), the framework developed here provides a blueprint for analogous questions in other contact sub-Riemannian geometries. The explicit computations and classification results we obtain are intended to support further investigations on constant curvature surfaces and isoperimetric problems.

The paper is organised as follows. In Section \ref{sec-prel} we introduce the general framework of three-dimensional contact sub-Riemannian Lie groups and fix notation. Section \ref{sec-notions} is devoted to the Riemannian approximation scheme: we construct adapted moving frames and coframes along an embedded surface and, using Cartan's structural equations, derive explicit formulae for the connection forms, the relevant sectional curvatures, and the limiting expressions that define the horizontal mean and Gauss curvatures, together with the symplectic distortion (see Section \ref{sec:not-curv}). Section \ref{sec-appl} contains applications to the two model groups, namely the Heisenberg group (Section \ref{sec:ex-Heis}) and the affine-additive group (Section \ref{Sec:ex-AA}).

\medskip

\noindent{\textbf{Acknowledgements}} The authors would like to thank Zolt\'an M. Balogh (Bern, Switzerland) for fruitful discussions on the subject of this paper. We also thank Georgios Simantiras (Patras, Greece) for interesting conversations about Example \ref{cc-flask}.

\color{black}

\section{Preliminaries}\label{sec-prel}

\subsection{Three dimensional contact sub-Riemannian Lie groups}
We consider an arbitrary  3-dimensional connected Lie group $G$; we denote by $e$ the identity element of $G$ and also, for $g\in G$ we denote by $L_g:G\to G$ the left translation given by $L_g(h)=gh,\, h\in G$. We shall assume that $G$ is equipped with a contact form $\vartheta$, i.e., a $1$-form such that $\vartheta\wedge d\vartheta\neq0$. The horizontal distribution $\mathcal{H}$ in the tangent bundle ${\mathrm T}\,G$ is given by $\mathcal{H}:=\ker\vartheta={\rm span}\{X,Y\}$, where $X$ and $Y$ are left-invariant vector fields. Since $\vartheta$ is a contact form, the distribution $\mathcal{H}$ is completely non-integrable: ${\mathrm T}\,G=\calH+[\calH,\calH]$. We denote by $T$ the Reeb field, i.e., the left-invariant vector field such that $\vartheta(T)=1$ and $d\vartheta(T,\cdot)=0$. A frame for ${\mathrm T}\,G$ is given by $\{X,Y,T\}$ and we assign a coframe $\{\omega_1,\omega_2,\vartheta\}$ given by the dual 1-forms to $\{X,Y,T\}$. We endow $G$ with a CR structure $J:\mathcal{H}\to \mathcal{H}$  which we extend to $J:\mathrm{T}\,G\to\mathrm{T}\,G$ by setting
$$
JX=Y,\quad JY=-X, \quad JT=0.
$$
A left-invariant metric is constructed in $G$ considering the left-invariant sub-Riemannian metric $g:\mathcal{H}\times\mathcal{H}\to \R$ such that $\{X,Y\}$ is an orthonormal frame. 
The distance associated  to $g$ is defined as follows:
an absolutely continuous curve $\gamma:[a,b]\to G$ shall be called {\it horizontal} if $\dot{\gamma}(s)\in\calH_{\gamma(s)}$ for almost every $s\in[a,b]$. Then, the {\it horizontal velocity} $|\dot\gamma(s)|$ of $\gamma$ is defined by
$$|\dot\gamma(s)|=\sqrt{g(\dot\gamma(s),X_{\gamma(s)})^2+g(\dot\gamma(s),Y_{\gamma(s)})^2}\,$$
and the {\it horizontal length} $\ell(\gamma)$ of $\gamma$ is given by $$\ell(\gamma)=\int_a^b|\dot\gamma(s)|\,ds.$$
Now, for every $p, q  \in G$, the {\it Carnot-Carath\'eodory or sub-Riemannian distance} $d_{CC}$ associated with the metric $g(\cdot,\cdot)$ is defined by:  
\begin{equation*}\label{CC distance}
    d_{CC}(p,q)=\inf_{\gamma\in \Gamma_{(p,q)}}\{\ell(\gamma) \},
\end{equation*}
where $\Gamma_{(p,q)}=\{\gamma,\gamma:[0,1]\to G \text{ horizontal and } \gamma(0)=p,\, \gamma(1)=q\}$. 
The definition \eqref{CC distance} depends only on the values of $g(\cdot,\cdot)$ in $\mathcal{H}\times \mathcal{H}$.
Moreover, since $\mathcal{H}$ is completely non-integrable, the distance $d_G$ is finite, geodesic, and
induces the manifold topology (cf. \cite{Mont}).
for any 1-form $\omega$ and vector fields $X,Y$.
The following proposition gives explicit relations between the vector fields $X,Y$ and $T$:
\begin{prop}\label{Prop: abc}
Let $X,Y,T$ be as above. Then there exist constants $a_i,b_i$, $i\in\{1,2,3\}$ and $c\neq 0$ such that
$$
[X,T]=a_1X+b_1Y, \quad [Y,T]=a_2X+b_2Y, \quad [X,Y]=a_3X+b_3Y+cT.
$$ 
Moreover, $a_i,b_i$, $i\in\{1,2,3\}$ and $c$ satisfy
\begin{equation}\label{eq: liebr-const}
   \left|\begin{matrix}
    a_1&b_1\\
    a_3&b_3
\end{matrix}\right|=
\left|\begin{matrix}
    a_2&b_2\\
    a_3&b_3
\end{matrix}\right|=0,\quad a_1+b_2=0. 
\end{equation}
\end{prop}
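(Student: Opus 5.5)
Since $X,Y,T$ are left-invariant and form a global frame, every bracket of two of them is again left-invariant. Hence it is a constant-coefficient combination of $X,Y,T$. So we may write
$$[X,T]=a_1X+b_1Y+c_1T,\quad [Y,T]=a_2X+b_2Y+c_2T,\quad [X,Y]=a_3X+b_3Y+cT$$
with constants, and the plan is to pin these down using the contact form and the Reeb conditions. The basic tool is the Cartan formula
$$d\vartheta(U,V)=U\vartheta(V)-V\vartheta(U)-\vartheta([U,V]).$$
For left-invariant $U,V\in\{X,Y,T\}$ the functions $\vartheta(U),\vartheta(V)$ are constant, so this reduces to $d\vartheta(U,V)=-\vartheta([U,V])$.

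First, the Reeb condition $d\vartheta(T,\cdot)=0$ gives $\vartheta([X,T])=\vartheta([Y,T])=0$, so $c_1=c_2=0$. Next, the contact condition $\vartheta\wedge d\vartheta\neq 0$ evaluated on $(X,Y,T)$ equals $d\vartheta(X,Y)$, since $\vartheta(X)=\vartheta(Y)=0$ and $\vartheta(T)=1$. Therefore $c=-d\vartheta(X,Y)=\vartheta([X,Y])\neq 0$. This proves the first part of the proposition.

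For the relations \eqref{eq: liebr-const} I would use the Jacobi identity
$$[[X,Y],T]+[[Y,T],X]+[[T,X],Y]=0.$$
Expanding with the structure constants:
\begin{itemize}
\item $[[X,Y],T]=a_3[X,T]+b_3[Y,T]$, since $[T,T]=0$.
\item $[[Y,T],X]=-a_2[X,Y]\cdot 0+\dots$; more precisely $[[Y,T],X]=a_2[X,X]+b_2[Y,X]=-b_2[X,Y]$.
\item $[[T,X],Y]=-a_1[X,Y]-b_1[Y,Y]=-a_1[X,Y]$.
\end{itemize}
Summing gives
$$a_3(a_1X+b_1Y)+b_3(a_2X+b_2Y)-(a_1+b_2)(a_3X+b_3Y+cT)=0.$$
The $T$-component yields $c(a_1+b_2)=0$, hence $a_1+b_2=0$ because $c\neq0$. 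Substituting $b_2=-a_1$, the $X$- and $Y$-components become
$$a_3a_1+b_3a_2=0,\qquad a_3b_1-b_3a_1=0.$$

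The second of these is exactly $\det\begin{pmatrix}a_1&b_1\\a_3&b_3\end{pmatrix}=0$. The main obstacle is the other determinant, $a_2b_3-a_3b_2=a_2b_3+a_3a_1$, which is what the $X$-component gives once $b_2=-a_1$ is used. So $\det\begin{pmatrix}a_2&b_2\\a_3&b_3\end{pmatrix}=0$ follows too.

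I would double-check the signs in this expansion carefully, since the whole argument rests on that bookkeeping.
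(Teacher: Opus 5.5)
Your proof is correct and follows the same route as the paper: Cartan's formula with the Reeb condition removes the $T$-components of $[X,T]$ and $[Y,T]$, the contact condition gives $c=\vartheta([X,Y])\neq 0$, left-invariance makes the coefficients constant, and the Jacobi identity yields the relations; your sign bookkeeping checks out. The only differences are cosmetic: you get constancy up front rather than by pushing forward from $e$, and the $X$- and $Y$-components of your summed identity are already $a_2b_3-a_3b_2$ and $a_3b_1-a_1b_3$ before substituting $b_2=-a_1$, so the step you flagged as the main obstacle is immediate.
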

\begin{proof}
We first prove that $ [X,T],[Y,T]\in\ker\vartheta.$
There are smooth functions $a_1,b_1,c_1:G\to \R$ such that
$$
  [X,T]=a_1X+b_1Y+c_1T.
$$
Since $T$ is the Reeb field, we have by definition $ \vartheta(X)=0$ and $ d\vartheta(T,\cdot)=0$.
Thus
$$
  0=d\vartheta(X,T)=X(\vartheta(T))-T(\vartheta(X))-\vartheta([X,T])=-\vartheta([X,T])=-c_1,
$$
where the second identity follows from the formula \begin{eqnarray}\label{domegaXY}
    d\omega (X,Y)&=& X(\omega (Y))-Y(\omega(X))-\omega([X,Y]),
\end{eqnarray}
 for every $\omega$  a smooth 1-form and every $X,Y$ smooth vector fields defined on a smoth manifold (cf. Proposition 14.29 in \cite{Lee-Intro}).
Therefore $[X,T]=a_1X+b_1Y\in\ker\vartheta$ and the proof for $[Y,T]$ is similar.

Now, there are smooth functions $a_3,b_3,c: G \mapsto \R$ such that
$$
[X,Y] =a_3X+b_3Y+cT.
$$
Since  $\vartheta$ is a contact form,
$$
0\neq d\vartheta(X,Y)=-\vartheta([X,Y])=c,
$$
implying that $c\neq 0$. Thus the left invariance of $X,Y,T$ yields
\begin{eqnarray*}\label{eq: pushfwd [X,T]}
   (DL_g)_e[X,T]_e&=&a_1(e)(DL_g)_eX_e+b_1(e)(DL_g)_eY_e\nonumber\\
   &=&a_1(e)X_g+b_1(e)Y_g,
   \end{eqnarray*}
  and also
   \begin{eqnarray*}\label{eq: [X,T]1}
       [X,T]_g&=&a_1(g)X_g+b_1(g)Y_g.
   \end{eqnarray*}
   On the other hand, the left invariance of the Lie bracket $[X,T]$ gives
\begin{equation*}\label{eq: [X,T]2}
    [X,T]_g= (DL_g)_e[X,T]_e=a_1(e)X_g+b_1(e)Y_g,
\end{equation*}
and we deduce that $a_1,b_1$ are constant.
Now,
\begin{eqnarray*}
    [X,[Y,T]]&=&[X,\,a_2X+b_2Y]\\
    &=&b_2[X,Y]\\
    &=&b_2a_3\,X+b_2b_3\,Y+b_2c\,T.
\end{eqnarray*}
Similarly,
$$
[Y,[T,X]]=a_1a_3\,X+
    a_1b_3\,Y+a_1c\,T,
    $$
    and
    $$[T,[X,Y]]=(-a_1a_3-a_2b_3)\,X+(
    -a_3b_1-b_2b_3)\,Y.
$$
Thus, the Jacobi identity with respect to $X,Y$ and $T$ yields
\begin{eqnarray*}
    &&
    a_3b_2-a_2b_3=0,\\
    &&
    a_1b_3-a_3b_1=0,\\
    &&
    c(a_1+b_2)=0.
\end{eqnarray*}
Since $c\neq 0$, the proof is complete.
\end{proof}
\subsection{Sub-Riemannian isometries}
A homeomorphism $f: G \to G $ is said a {\it CC-isometry} if  
\begin{equation}\label{def: eq: CC-isom}
    d_{CC}(f(p),f(q))=d_{CC}(p,q),\quad\forall p,q\in G.
\end{equation}
We underline here that $CC$-isometries are smooth analytic maps (see Theorem 1.3 in \cite{DO-I}).
We denote by ${\rm Isom}_{CC}(G)$ (resp. by ${\rm AutIsom}_{CC}(G)$) the group of $CC$-isometries of $G$ (resp. the group of automorphisms of $G$ that are $CC$-isometries). The next theorem follows from Theorem 1.2,  \cite{KD-17}:
\begin{thm}\label{Thm:Isom}
     Let $G$ be a $3$-dimensional connected contact sub-Riemannian Lie group $G$ with groups of CC-isometries ${\rm Isom}_{CC}(G)$ and ${\rm AutIsom}_{CC}(G)$. If $G$ is nilpotent, then $${\rm Isom}_{CC}(G) = G \rtimes {\rm AutIsom}_{CC}(G)\,.$$
\end{thm}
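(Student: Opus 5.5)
The statement is Theorem \ref{Thm:Isom}: the CC-isometry group of a nilpotent $G$ is the semidirect product of left translations with the isometric automorphisms. The paper says it follows from Theorem 1.2 of \cite{KD-17}. I would therefore not prove it from scratch, but reduce it to that result.

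\textbf{Reduction to \cite{KD-17}.} The cited theorem (Kivioja--Le Donne) says the following. Let $G$ be a connected nilpotent Lie group with a left-invariant sub-Riemannian (more generally sub-Finsler) structure. Then every isometry fixing the identity is a group automorphism, so every isometry is affine: a left translation composed with an automorphism.

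\textbf{Checking the hypotheses.} I would verify them first, which is routine. The distribution $\mathcal H=\ker\vartheta={\rm span}\{X,Y\}$ is left-invariant because $X,Y$ are. The metric $g$ is left-invariant because $\{X,Y\}$ is orthonormal and left-invariant. Bracket generation holds because $c\neq0$ in Proposition \ref{Prop: abc}. So $d_{CC}$ is a left-invariant sub-Riemannian distance inducing the manifold topology, and each $L_h$ is a CC-isometry: left translations preserve $\mathcal H$ and horizontal lengths, hence send $\Gamma_{(p,q)}$ bijectively onto $\Gamma_{(hp,hq)}$. Thus $G\subset{\rm Isom}_{CC}(G)$ as left translations.

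\textbf{Decomposition.} Take $f\in{\rm Isom}_{CC}(G)$ and set $\phi=L_{f(e)^{-1}}\circ f$. Then $\phi$ is an isometry fixing $e$, so by \cite{KD-17} $\phi$ is an automorphism, i.e. $\phi\in{\rm AutIsom}_{CC}(G)$, and $f=L_{f(e)}\circ\phi$. This decomposition is unique: if $L_g\phi=L_{g'}\phi'$, evaluating at $e$ gives $g=g'$, hence $\phi=\phi'$. Also $G$ is normal in ${\rm Isom}_{CC}(G)$, since $\phi L_g\phi^{-1}=L_{\phi(g)}$ for any automorphism $\phi$. The multiplication rule $(L_g\phi)(L_{g'}\phi')=L_{g\phi(g')}\,\phi\phi'$ is exactly the semidirect product law, giving ${\rm Isom}_{CC}(G)=G\rtimes{\rm AutIsom}_{CC}(G)$.

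\textbf{Main obstacle.} The only nontrivial step is that an isometry fixing $e$ is an automorphism. This is precisely the content of \cite{KD-17}, which uses the smoothness of isometries \cite{DO-I} and the nilradical structure, so I would cite it rather than reprove it. If a self-contained argument were wanted, one would show that the differential at $e$ preserves $\mathcal H_e$ and the bracket structure, and then use rigidity of isometries determined by their $1$-jet. That route is much harder, so I would rely on the citation.
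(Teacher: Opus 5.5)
Your proposal is correct and is essentially the paper's argument: the paper gives no proof beyond invoking Theorem 1.2 of \cite{KD-17}, exactly as you do. Your checks that $d_{CC}$ is a left-invariant distance inducing the manifold topology, your unique decomposition $f=L_{f(e)}\circ\phi$, and the semidirect product law $(L_g\phi)(L_{g'}\phi')=L_{g\phi(g')}\,\phi\phi'$ simply spell out what the paper leaves implicit.
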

Theorem \ref{Thm:Isom} holds only when $G$ is nilpotent, cf. \cite{KD-17}, Section 4.

\section{Notions of curvature}\label{sec-notions}
In Section \ref{sec:Riem-approx} we use a Riemannian approximation scheme  to provide notions of horizontal mean curvature and of horizontal Gaussian curvature for a $C^2$-smooth surface embedded in $G$ in Section \ref{sec:not-curv}. 
In this approach we use Cartan's formalism, we refer for details about Cartan's method to \cite{Chen-Lin-Biharmonic}, Chpt. 3. For a surface $\Sigma$ embedded in $G$ and by using Cartan's structural equations we derive formulae for the sectional curvature $\overline K^\epsilon$ of $G$ and for the second fundamental form $\mathbf{II}^\epsilon_\Sigma$ related to $\Sigma$. The formulae for the horizontal mean curvature $H^h_\Sigma$ and the horizontal Gaussian curvature $K^h_\Sigma$ will then arise by taking the limit case. We subsequently discuss the invariance of all those features in Section \ref{sec:curv-invariance}, horizontal geodesic curvature and horizontal normal curvature in Section \ref{sec:hor-geod} as well as $G$-cylindruical surfaces in Section \ref{sec:G-cyl}.
\subsection{The Riemannian approximation method}\label{sec:Riem-approx}
For $\epsilon>0$, we consider a family of Riemannian metrics $(g_\epsilon)_{\epsilon>0}$ on $G$ such that the frame $\{X,Y,T_\epsilon=\epsilon T\}$ is orthonormal and we denote its corresponding coframe by $\{\omega_1,\omega_2,\theta_\epsilon=(1/\epsilon)\theta\}$; the associated Riemannian distance to $g_\epsilon$ is denoted by $d_\epsilon$.
 Let $\omega_1=X^*$, $\omega_2=Y^*$ and $\vartheta_\epsilon=\vartheta/\epsilon=T_\epsilon^*$. The Lie bracket relations for $G$ are now written as:
\begin{equation*}\label{eq: bracket relations eps}
    [X,T_\epsilon]=\epsilon(a_1X+b_1Y), \quad
    [Y,T_\epsilon]=\epsilon(a_2X+b_2Y),\quad
    [X,Y]=a_3X+b_3Y+(c/\epsilon)T_\epsilon,
\end{equation*}
where $a_i,b_i$ and $c$ satisfy \eqref{eq: liebr-const} for all $i\in\{1,2\}$.
Applying formula \eqref{domegaXY} to \eqref{eq: bracket relations eps} we obtain
\begin{equation*}
    \begin{aligned}\label{eq: d old}
 d\omega_1=&-a_3\,\omega_1\wedge\omega_2-\epsilon \left(a_1\,\omega_1\wedge\theta_\epsilon+a_2\,\omega_2\wedge\vartheta_\epsilon\right),\\
 d\omega_2=&-b_3\,\omega_1\wedge\omega_2-\epsilon\left(b_1\,\omega_1\wedge\theta_\epsilon+b_2\,\omega_2\wedge\vartheta_\epsilon\right),\\
 d\vartheta_\epsilon=&-(c/\epsilon)\,\omega_1\wedge\omega_2.
\end{aligned}
\end{equation*}
From the first structural equations there exist three 1-forms 
$
\theta_1^2,\theta_1^3,$ and $\theta_2^3
$
such that
\begin{eqnarray*}
d\omega_1&=&\theta_1^2\wedge\omega_2+\theta_1^3\wedge\vartheta_\epsilon,\\
d\omega_2&=&-\theta_1^2\wedge\omega_1+\theta_2^3\wedge\vartheta_\epsilon,\\
d\theta_\epsilon&=&-\theta_1^3\wedge\omega_1-\theta_2^3\wedge\omega_2.
\end{eqnarray*}
We write
\begin{eqnarray*}
    \theta_1^2&=&(\theta_1^2)_1\,\omega_1+(\theta_1^2)_2\,\omega_2+(\theta_1^2)_3\,\vartheta_\epsilon,\\
    \theta_1^3&=&(\theta_1^3)_1\,\omega_1+(\theta_1^3)_2\,\omega_2+(\theta_1^3)_3\,\vartheta_\epsilon,\\
    \theta_2^3&=&(\theta_2^3)_1\,\omega_1+(\theta_2^3)_2\,\omega_2+(\theta_2^3)_3\,\vartheta_\epsilon.
\end{eqnarray*}
Then by equating we have:
\begin{eqnarray*}
   \theta_1^2&=&-a_3\,\omega_1-b_3\,\omega_2+\left(\frac{\epsilon}{2}(a_2-b_1)-\frac{c}{2\epsilon}\right)\,\vartheta_\epsilon,\\
   \theta_1^3&=&-\epsilon a_1\,\omega_1-\left(\frac{\epsilon}{2}(b_1+a_2)+\frac{c}{2\epsilon}\right)\,\omega_2,\\
   \theta_2^3&=&\left(\frac{c}{2\epsilon}-\frac{\epsilon}{2}(b_1+a_2)\right)\,\omega_1\epsilon b_2\,\omega_2.    \end{eqnarray*}
Now, the curvature 2-forms
$
\Phi^2_1, \Phi_3^1$ and $\Phi_2^3,
$
are given by the second structural
equations:
\begin{eqnarray*}
 \Phi_1^2&=&d\theta_1^2+\theta_1^3\wedge\theta_2^3,\\
 \Phi_1^3&=&d\theta_1^3-\theta_1^2\wedge\theta_2^3,\\
 \Phi_2^3&=&d\theta_2^3+\theta_1^2\wedge\theta_1^3.
\end{eqnarray*}
The sectional curvatures of the characteristic plane spanned by $X$ and $Y$ is subsequently given by
\begin{eqnarray}\label{SecCurvXY}
    K_\epsilon(X,Y)=-\Phi_1^2(X,Y)&=&-d\theta_1^2(X,Y)-\theta_1^3\wedge\theta_2^3(X,Y) \nonumber \\
    &=&-X\theta_1^2(Y)+Y\theta_1^2(X)+\theta_1^2([X,Y])-\theta_1^3(X)\theta_2^3(Y)+\theta_1^3(Y)\theta_2^3(X) \nonumber \\
    &=&-a_3^2-b_3^2+\frac{c}{2}(a_2-b_1)+\epsilon^2\left(\frac{(a_2+b_1)^2}{4}+ a_1^2 \right)-\frac{3c^2}{4\epsilon^2},
    \end{eqnarray}
    and in a similar manner we find that the sectional curvatures of the characteristic planes spanned by $X$ and $T$ as well as by $Y$ and $T$ are, respectively:
    \begin{eqnarray}
\label{SecCurvXT}
    K_\epsilon(X,T_\epsilon)=-\Phi^3_1(X,T_{\epsilon})&=& 
    \frac{c^2}{4\epsilon^2}+\frac{\epsilon^2}{4}\left((a_2+b_1)(a_2-3b_1)-4a_1^2\right)-\frac{c}{2}(a_2+b_1),\, \\
    \label{SecCurvYT} K_\epsilon(Y,T_\epsilon)=-\Phi^3_2(Y,T_{\epsilon})&=& 
    \frac{c^2}{4\epsilon^2}-\frac{\epsilon^2}{4}\left((a_2+b_1)(3a_2-b_1)+4a_1^2\right)+\frac{c}{2}(a_2+b_1).
    \end{eqnarray}
    The following holds (cf. \cite{Gromov}, Chpt. 1, or \cite{D-MLG}, Theorem 12.3.7):
\begin{thm}
The family of metric spaces $(G,d_\epsilon)$ converges to the metric space $(G,d_{CC})$ in the
pointed Gromov–-Hausdorff sense as $\epsilon\to 0^+$.
\end{thm}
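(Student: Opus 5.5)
The plan is to work on a fixed compact set and compare $d_\epsilon$ with $d_{CC}$ directly. From pointwise comparison we get $d_\epsilon\le d_{CC}$ and monotonicity in $\epsilon$. From the ball-box theorem we get a uniform lower bound $d_{CC}\le d_\epsilon + C\epsilon$ on compact sets. Together these give uniform convergence of $d_\epsilon$ to $d_{CC}$ on compacts. Pointed Gromov--Hausdorff convergence then follows, using the identity map as the approximating map and the basepoint $e$.

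\emph{Upper bound.} A horizontal curve $\gamma$ has $\dot\gamma\in\calH$. Since $\{X,Y,T_\epsilon\}$ is $g_\epsilon$-orthonormal and $\{X,Y\}$ is $g$-orthonormal, the $g_\epsilon$-length of $\gamma$ equals its horizontal length $\ell(\gamma)$. Hence $d_\epsilon\le d_{CC}$ for every $\epsilon$. Moreover, for an arbitrary absolutely continuous curve, $|\dot\gamma|_{g_\epsilon}^2=\omega_1(\dot\gamma)^2+\omega_2(\dot\gamma)^2+\epsilon^{-2}\vartheta(\dot\gamma)^2$, which is nonincreasing in $\epsilon$. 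So $d_\epsilon$ increases as $\epsilon\downarrow 0$ and is bounded above by $d_{CC}$.

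\emph{Lower bound.} This is the main step. Fix $R>0$ and points $p,q$ in the $d_{CC}$-ball $B_{CC}(e,R)$; since $d_\epsilon\le d_{CC}$, they also lie in the corresponding $d_\epsilon$-ball. Take a $g_\epsilon$-geodesic $\gamma$ from $p$ to $q$ of length $L=d_\epsilon(p,q)\le 2R$. Then
$$\int|\vartheta(\dot\gamma)|\le \epsilon L,$$
so the vertical excursion of $\gamma$ is $O(\epsilon)$. I then build a horizontal curve shadowing $\gamma$. Partition $\gamma$ into $N$ pieces and replace each piece by the horizontal lift of its projection $(\omega_1(\dot\gamma),\omega_2(\dot\gamma))$, which means solving $\dot\eta=\omega_1(\dot\gamma)X+\omega_2(\dot\gamma)Y$ from the same starting point. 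By Grönwall-type estimates, using the compactness of the region and the smoothness of the left-invariant fields, the endpoint of this horizontal curve differs from the true endpoint by a displacement whose size is controlled by $\int|\vartheta(\dot\gamma)|\le\epsilon L$ together with bracket error terms. Rather than tracking these errors piece by piece, I would not partition at all and instead close the gap once at the end. By the ball-box theorem (Mitchell/Nagel--Stein--Wainger, cf. \cite{Mont}), two points whose Riemannian distance is $\delta$ have $d_{CC}\le C\sqrt{\delta}$ locally uniformly. This yields
$$d_{CC}(p,q)\le d_\epsilon(p,q)+C\sqrt{\epsilon L}$$
uniformly on $B_{CC}(e,R)$. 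The obstacle is making the lifting estimate uniform. The lift is controlled by an ODE whose coefficients are bounded on a compact set, which I expect to reach slightly beyond $B_{CC}(e,R)$, and the error is linear in the vertical input $\int|\vartheta(\dot\gamma)|$. So the endpoint discrepancy is $O(\epsilon)$ in a fixed Riemannian metric $g_1$, and the ball-box estimate converts this into $O(\sqrt\epsilon)$ in $d_{CC}$.

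\emph{Conclusion.} We obtain $\sup_{B_{CC}(e,R)}|d_\epsilon-d_{CC}|\to0$. The identity map is therefore an $o(1)$-isometry between the balls $B_{CC}(e,R)$ and the corresponding $d_\epsilon$-balls, after an $o(1)$ adjustment of radii: $d_\epsilon$-balls of radius $R$ sit inside $d_{CC}$-balls of radius $R+o(1)$ by the lower bound, and contain $d_{CC}$-balls of radius $R$ by the upper bound. This gives pointed Gromov--Hausdorff convergence of $(G,d_\epsilon,e)$ to $(G,d_{CC},e)$. Alternatively, one can simply invoke \cite{Gromov} or \cite{D-MLG}, Theorem 12.3.7, once one checks that $g_\epsilon$ restricted to $\calH$ is $g$ and that the vertical direction is penalized by $\epsilon^{-2}$, which is exactly the setup of those results.
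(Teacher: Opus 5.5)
The paper does not prove this statement itself. It quotes it from \cite{Gromov}, Chpt.~1, and \cite{D-MLG}, Theorem~12.3.7, which is the ``alternatively'' in your last sentence. Your main argument is a genuinely different route: a self-contained, quantitative proof, and it is correct in substance. Its steps are as follows.
\begin{enumerate}
\item Horizontal curves have the same $g_\epsilon$- and $g$-length, so $d_\epsilon\le d_{CC}$.
\item A $g_\epsilon$-geodesic satisfies $\int|\vartheta(\dot\gamma)|\le\epsilon L$.
\item A Gr\"onwall comparison of $\dot\gamma=\omega_1(\dot\gamma)X+\omega_2(\dot\gamma)Y+\vartheta(\dot\gamma)T$ with the flow of its horizontal part gives an endpoint error $O(\epsilon)$ in a fixed metric.
\item The ball-box estimate $d_{CC}\le C\sqrt{d_1}$ closes the gap; by left-invariance it is uniform whenever $d_1$ is small.
\end{enumerate}
Your route buys an explicit rate, $d_\epsilon\le d_{CC}\le d_\epsilon+C_R\sqrt{\epsilon}$ on bounded sets. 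That is, you get locally uniform convergence of the distance functions on a fixed underlying space, which is stronger than pointed Gromov--Hausdorff convergence. The citation buys generality: it covers arbitrary bracket-generating distributions, needs no group structure, and leaves no estimates to verify.

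Three small repairs are needed.

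First, your overview announces $d_{CC}\le d_\epsilon+C\epsilon$, but the argument you give only yields $C\sqrt{\epsilon L}$. That is enough for the conclusion, but you should not claim the linear rate without a further argument.

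Second, the compact set for Gr\"onwall should not be described as ``slightly beyond $B_{CC}(e,R)$'', because the $g_\epsilon$-geodesic is controlled by $d_\epsilon$, not by $d_{CC}$. Instead, note that $g_\epsilon\ge g_1$ for $\epsilon\le 1$, so $d_{CC}\ge d_\epsilon\ge d_1$. For $p,q\in B_{d_\epsilon}(e,R)$ one has $L\le 2R$. Then $\gamma$, the horizontal lift $\eta$, and both endpoints stay in the compact set $\overline{B_{d_1}(e,3R)}$, independently of $\epsilon$. This makes the Lipschitz and Gr\"onwall constants uniform.

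Third, prove the lower bound for all $p,q\in B_{d_\epsilon}(e,R)\supset B_{CC}(e,R)$, not only for $B_{CC}(e,R)$. The same argument works verbatim. This form is exactly what you need for the inclusion $B_{d_\epsilon}(e,R)\subset B_{CC}(e,R+o(1))$ used in your conclusion.
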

We now fix once for all the totality of our assumptions on the surface $\Sigma$. Let $u:G\to\R$ be a $C^2$-smooth function such that its tangent map $Du$ is surjective for all $p\in u^{-1}(0)$. Then the regular level set theorem implies that the set 
\begin{equation*}
    \Sigma=\{p\in G:u(p)=0\},
\end{equation*}
is a regular (embedded) 2-dimensional submanifold of $G$.  We call such a submanifold a {\it hypersurface} or simply a {\it surface} in $G$ and since it is always orientable we fix an orientation for $\Sigma$. 
We say that a point $p \in\Sigma$ is \textit{characteristic} if
\begin{equation}\label{char point}
    \nabla_H u(p):=(Xu,Yu)_{|p}=(0,0),
\end{equation}
and the \textit{characteristic set} $\mathcal{C}(S)$ of $\Sigma$ comprises of its characteristic points:
\begin{equation*}
    \mathcal{C}(\Sigma)=\{p\in\Sigma:\nabla_H u(p)=(0,0)\}.
\end{equation*}
Following the notation used in  \cite{CDPT}, as well as  in \cite{BTV}, we set
$$
p=Xu,\quad q=Yu,\quad r=T_\epsilon u,
$$
and subsequently,
\begin{equation}
\!
    \begin{aligned}\label{notation MF 0}
    l=\|\nabla_H u\|:=\sqrt{(Xu)^2+(Yu)^2},\quad \overline{p}=\frac{p}{l},\quad\overline{q}=\frac{q}{l},\quad \overline{r}=\frac{r}{l}\\
    l_\epsilon=\sqrt{(Xu)^2+(Yu)^2+(T^\epsilon u)^2},\quad \overline{r_\epsilon}=\frac{r}{l_\epsilon},\\
    \overline{p_\epsilon}=\frac{p}{l_\epsilon}\,\quad\text{and}\quad \overline{q_\epsilon}=\frac{q}{l_\epsilon}.
\end{aligned}
\end{equation}
In particular $\overline{p}^2+\overline{q}^2=1$ and $\left(\frac{l}{l_\epsilon} \right)^2 (\overline{r}^2+1)=1$. From Definition \eqref{char point} it is straightforward that all functions in \eqref{notation MF 0} are well defined on $\Sigma\setminus\mathcal{C}(\Sigma)$.
The \textit{Riemannian gradient} $\nabla_\epsilon u$ of $u$ is given by
$$\nabla_\epsilon u=(Xu)X+(Yu)Y+(T_\epsilon u)T_\epsilon$$ and the \textit{Riemannian unit normal} $n_\Sigma$ to $\Sigma$ is
\begin{equation*}
    n_\Sigma=\frac{\nabla_\epsilon u}{\|\nabla_\epsilon u\|}.
\end{equation*}
\begin{defn}\label{E_1, E_2}
    An orthonormal frame $\{E_1,E_2,n_\Sigma\} $ associated to the surface $\Sigma$ is given by
\begin{equation*}
        E_1=Jn_\Sigma=-\overline{q}\,X+\overline{p}\,Y,\quad 
    E_2=\frac{l}{l_\epsilon}(\overline{r}\,\overline{p}\,X+\overline{r}\,\overline{q}\,Y-T_\epsilon),
\end{equation*}
where $J:\mathrm{T}\,G\to\mathrm{T}\,G$ is the CR structure.
\end{defn}
For every point $p\in\Sigma$, $\{(E_1)_{|p},(E_2)_{|p}\}$ is an orthonormal basis for the tangent plane $T_{p}\Sigma$.
For further reference, we will need the following.
\begin{lem}\label{l,r limits}
    Let $\overline{p}$, $\overline{q}$, $l_\epsilon$ and $\overline{r_\epsilon}$ as above.  Then, when $\epsilon\to  0^+$ we have the following:
     \begin{eqnarray*}
     &&
        l_\epsilon\to\|\nabla_H u\|,\quad
        \overline{r_\epsilon}\to 0,\quad
        \frac{\overline{r_\epsilon}}{l_\epsilon}\to 0,\quad
        \\
        &&
        \frac{\overline{r_\epsilon}}{\epsilon\, l_\epsilon}\to\frac{Tu}{\|\nabla_H u\|^2},
        \left(\frac{\overline{r_\epsilon}}{\epsilon}\right)^2\to\frac{(Tu)^2}{\|\nabla_H u\|^2},\quad
        \frac{\overline{r_\epsilon}}{\epsilon^2}\sim\frac{Tu}{\epsilon \|\nabla_H u\|}.
    \end{eqnarray*}
\end{lem}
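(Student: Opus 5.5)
The plan is to make every quantity explicit in $\epsilon$ and then pass to the limit, working pointwise at a fixed $p\in\Sigma\setminus\mathcal{C}(\Sigma)$, so that $\|\nabla_H u\|=l>0$. The key observation is that $T_\epsilon=\epsilon T$, so $r=T_\epsilon u=\epsilon\,Tu$, where $Tu$ is a fixed finite number independent of $\epsilon$. Consequently
\[
l_\epsilon=\sqrt{l^2+\epsilon^2(Tu)^2},\qquad \overline{r_\epsilon}=\frac{\epsilon\,Tu}{\sqrt{l^2+\epsilon^2(Tu)^2}} .
\]
Every claimed limit is then the limit of an explicit elementary function of $\epsilon$ with fixed coefficients $l>0$ and $Tu$.

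The limits follow in turn.
\begin{itemize}
\item First, $l_\epsilon\to l=\|\nabla_H u\|$ by continuity of the square root.
\item Next, $\overline{r_\epsilon}\to 0/l=0$, and $\overline{r_\epsilon}/l_\epsilon\to 0/l^2=0$, using that $l_\epsilon\to l\neq0$.
\item Dividing by $\epsilon$ gives
\[
\frac{\overline{r_\epsilon}}{\epsilon}=\frac{Tu}{l_\epsilon}\to\frac{Tu}{l}.
\]
Hence
\[
\frac{\overline{r_\epsilon}}{\epsilon\, l_\epsilon}=\frac{Tu}{l_\epsilon^2}\to\frac{Tu}{\|\nabla_H u\|^2},
\qquad
\Bigl(\frac{\overline{r_\epsilon}}{\epsilon}\Bigr)^2\to\frac{(Tu)^2}{\|\nabla_H u\|^2}.
\]
\item Finally,
\[
\frac{\overline{r_\epsilon}}{\epsilon^2}=\frac{1}{\epsilon}\cdot\frac{Tu}{l_\epsilon}.
\]
Since $Tu/l_\epsilon\to Tu/l$, this is asymptotically equivalent to $Tu/(\epsilon\|\nabla_H u\|)$. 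The relation $\sim$ is interpreted as the ratio tending to $1$ when $Tu\neq0$, or, uniformly, as the difference being $o(1/\epsilon)$. I would state explicitly that the degenerate case $Tu=0$ is trivial, since there both sides vanish identically.
\end{itemize}

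There is no real obstacle here. The only point requiring care is to note that the limits hold away from the characteristic set, where $l>0$, and that the notation $r=T_\epsilon u$ hides the factor $\epsilon$. Once that factor is made visible, all statements reduce to continuity of rational and root functions in $\epsilon$.
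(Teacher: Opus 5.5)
Your proof is correct and complete. Writing $r=T_\epsilon u=\epsilon\,Tu$ and $l_\epsilon=\sqrt{\|\nabla_H u\|^2+\epsilon^2(Tu)^2}$ reduces every claim to continuity in $\epsilon$ at a non-characteristic point, which is exactly the elementary check the paper leaves implicit: it states this lemma without proof. Your reading of $\sim$, including the trivial case $Tu=0$, is a sensible interpretation of the last relation.
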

\begin{lem}\label{MF ids}
We let:
$$
H^H=X(\overline{p})+Y(\overline{q}),\quad Q^H=X(\overline{q})-Y(\overline{p}).
$$
The following hold:
\begin{align*}
    n_\Sigma(\overline{p})&=(l/l_\epsilon)(-\overline{q}\,Q^H+\overline{r}\,T_\epsilon(\overline{p})),
    \quad &&n_\Sigma(\overline{q})=(l/l_\epsilon)(\overline{p}\,Q^H+\overline{r}\,T_\epsilon(\overline{q})),\\
    E_1(\overline{p})&=-\overline{q}\, H^H,\quad
    &&E_1(\overline{q})=\overline{p}\, H^H,\\
    E_2(\overline{p})&=(l/l_\epsilon)(-\overline{r}\,\overline{q}\,Q^H-T_\epsilon(\overline{p})),\quad
    &&E_2(\overline{q})=(l/l_\epsilon)(\overline{r}\,\overline{p}\,Q^H-T_\epsilon(\overline{q})).
\end{align*}    
\end{lem}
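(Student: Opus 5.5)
We prove Lemma \ref{MF ids} by direct computation. The only tool needed is the identity $\overline{p}^2+\overline{q}^2=1$ and its derivatives. Since $\overline p^2+\overline q^2\equiv 1$ away from $\mathcal C(\Sigma)$, for any vector field $V$ we have
\[
\overline{p}\,V(\overline{p})+\overline{q}\,V(\overline{q})=0 .
\]
Hence the vector $(V\overline p,V\overline q)$ is orthogonal to $(\overline p,\overline q)$, and so it is a multiple of $(-\overline q,\overline p)$. Concretely,
\[
V(\overline p)=-\overline q\,\lambda_V,\qquad V(\overline q)=\overline p\,\lambda_V,\qquad \lambda_V=\overline p\,V(\overline q)-\overline q\,V(\overline p).
\]
The strategy is to compute $\lambda_V$ for $V=X$, $V=Y$, and then, by linearity, for $E_1$, $E_2$ and $n_\Sigma$ using their expressions in the frame $\{X,Y,T_\epsilon\}$.

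\textbf{Step 1.} We express the horizontal derivatives through $H^H$ and $Q^H$. From $X(\overline p)=-\overline q\lambda_X$, $X(\overline q)=\overline p\lambda_X$, $Y(\overline p)=-\overline q\lambda_Y$ and $Y(\overline q)=\overline p\lambda_Y$ we get
\[
H^H=-\overline q\lambda_X+\overline p\lambda_Y,\qquad Q^H=\overline p\lambda_X+\overline q\lambda_Y .
\]
Because $(\overline p,\overline q)$ is a unit vector, this linear system is an orthogonal rotation, and inverting it gives
\[
\lambda_X=\overline p\,Q^H-\overline q\,H^H,\qquad \lambda_Y=\overline q\,Q^H+\overline p\,H^H .
\]

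\textbf{Step 2.} We combine these along the frame vectors.

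For $E_1=-\overline q X+\overline p Y$:
\[
\lambda_{E_1}=-\overline q\lambda_X+\overline p\lambda_Y=H^H(\overline q^2+\overline p^2)=H^H .
\]
This gives $E_1(\overline p)=-\overline q H^H$ and $E_1(\overline q)=\overline p H^H$.

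For the horizontal part $\overline p X+\overline q Y$:
\[
\overline p\lambda_X+\overline q\lambda_Y=Q^H .
\]
Now write $n_\Sigma=(l/l_\epsilon)\big(\overline pX+\overline qY+\overline r T_\epsilon\big)$, which holds since $\nabla_\epsilon u=l(\overline pX+\overline qY+\overline rT_\epsilon)$ and $\|\nabla_\epsilon u\|=l_\epsilon$. Applying this to $\overline p$ gives
\[
n_\Sigma(\overline p)=(l/l_\epsilon)\big(-\overline q\,Q^H+\overline r\,T_\epsilon(\overline p)\big),
\]
and similarly for $\overline q$. The $T_\epsilon$-term is simply left as it is.

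Finally, $E_2=(l/l_\epsilon)\big(\overline r(\overline pX+\overline qY)-T_\epsilon\big)$, so
\[
E_2(\overline p)=(l/l_\epsilon)\big(-\overline r\,\overline q\,Q^H-T_\epsilon(\overline p)\big),
\]
and similarly $E_2(\overline q)=(l/l_\epsilon)\big(\overline r\,\overline p\,Q^H-T_\epsilon(\overline q)\big)$.

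All computations hold on $\Sigma\setminus\mathcal C(\Sigma)$, where $l\neq0$, and in fact on a neighbourhood of it in $G$, since $\overline p,\overline q$ are defined wherever $\nabla_H u\neq0$. No step is a genuine obstacle. The only point requiring care is the inversion in Step 1 together with sign bookkeeping; the orthogonality identity makes it immediate.
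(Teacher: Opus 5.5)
Your argument is correct. The identity $\overline{p}\,V(\overline{p})+\overline{q}\,V(\overline{q})=0$, the inversion giving $\lambda_X,\lambda_Y$, and the expansions of $E_1$, $E_2$, $n_\Sigma$ in the frame $\{X,Y,T_\epsilon\}$ all check out, and this is the routine direct computation the paper relies on, since it states the lemma without proof. One cosmetic nit: the matrix you invert in Step 1 has determinant $-\overline{p}^2-\overline{q}^2=-1$ and is its own inverse, so it is a reflection rather than a rotation; this does not affect your formulas.
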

\begin{defn}\label{E_1, E_2, dual}
 The coframe dual to the frame of Definition  \ref{E_1, E_2} comprises the differential $1$-forms $\{\alpha_1,\alpha_2,\alpha_\Sigma\}$ given by
\begin{equation*}
    \begin{aligned}\label{coframe}
    \alpha_1&= -\overline{q}\,\omega_1+\overline{p}\,\omega_2,\\
    \alpha_2&=(l/l_\epsilon)(\overline{r}\,\overline{p}\,\omega_1+\overline{r}\,\overline{q}\,\omega_2-\vartheta_\epsilon),\\
    \alpha_\Sigma&=(l/l_\epsilon)\left(\overline{p}\,\omega_1+\overline{q}\,\omega_2+\overline{r}\,\vartheta_\epsilon\right).
\end{aligned}
\end{equation*}
\end{defn}
The following proposition holds:
\begin{prop}\label{wedge relations coframe}
    Let $\omega_1$, $\omega_2$, $\vartheta_\epsilon$, $\alpha_1$, $\alpha_2$ and $\alpha_\Sigma$ be as above. Then:
\begin{equation*}
        \begin{aligned}\label{eq: omega}
    \omega_1&=(l/l_\epsilon)\overline{p}\alpha_\Sigma-\overline{q}\alpha_1+(l/l_\epsilon)\overline{rp}\alpha_2,\\
    \omega_2&=(l/l_\epsilon)\overline{q}\alpha_\Sigma+\overline{p}\alpha_1+(l/l_\epsilon)\overline{rq}\alpha_2,\\
        \vartheta_\epsilon&=(l/l_\epsilon)(\overline{r}\alpha_\Sigma-\alpha_2).
    \end{aligned}
\end{equation*}
    Also,
\begin{equation*}
        \begin{aligned}\label{eq: wedges}
        \omega_1\wedge\omega_2&=(l/l_\epsilon)(\alpha_\Sigma\wedge\alpha_1-\overline{r}\alpha_1\wedge\alpha_2),\\
        \omega_1\wedge\vartheta_\epsilon&=(l/l_\epsilon)\overline{qr}\,\alpha_\Sigma\wedge\alpha_1-\overline{p}\,\alpha_\Sigma\wedge\alpha_2+(l/l_\epsilon)\,\overline{q}\,\alpha_1\wedge\alpha_2,\\
        \omega_2\wedge\vartheta_\epsilon&=-(l/l_\epsilon)\overline{pr}\,\alpha_\Sigma\wedge\alpha_1-\overline{q}\,\alpha_\Sigma\wedge\alpha_2-(l/l_\epsilon)\,\overline{p}\,\alpha_1\wedge\alpha_2
    \end{aligned}
\end{equation*}
    and
    \begin{equation*}
    \begin{aligned}\label{eq: d new}
     d\omega_1=&-(l/l_\epsilon)(a_3+\epsilon\overline{r}(\overline{q}a_1-\overline{p}a_2))\,\alpha_\Sigma\wedge\alpha_1\\
     &+\epsilon((\overline{p}a_1+\overline{q}a_2))\,\alpha_\Sigma\wedge\alpha_2\\
     &-(l/l_\epsilon)(-\overline{r}a_3+\epsilon\overline{r}(\overline{q}a_1-\overline{p}a_2))\,\alpha_1\wedge\alpha_2,\\
     d\omega_2=&-(l/l_\epsilon)(b_3+\epsilon\overline{r}(\overline{q}b_1-\overline{p}b_2))\,\alpha_\Sigma\wedge\alpha_1\\
     &+\epsilon((\overline{p}b_1+\overline{q}b_2))\,\alpha_\Sigma\wedge\alpha_2\\
     &-(l/l_\epsilon)(-\overline{r}b_3+\epsilon\overline{r}(\overline{q}b_1-\overline{p}b_2))\,\alpha_1\wedge\alpha_2,\\
     d\vartheta_\epsilon=&-(cl/(\epsilon l_\epsilon))(\alpha_\Sigma\wedge\alpha_1-\overline{r}\alpha_1\wedge\alpha_2).
    \end{aligned}
    \end{equation*}
\end{prop}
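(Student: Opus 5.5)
The plan is to obtain all three groups of identities by pure linear algebra from Definition \ref{E_1, E_2, dual} and the expressions for $d\omega_1,d\omega_2,d\vartheta_\epsilon$ from the Riemannian approximation scheme. No structural equation for the new coframe is needed.

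\textbf{First group: inverting the coframe.} The frame $\{E_1,E_2,n_\Sigma\}$ of Definition \ref{E_1, E_2} is $g_\epsilon$-orthonormal, and so is $\{X,Y,T_\epsilon\}$. Hence the matrix $A$ expressing $(\alpha_1,\alpha_2,\alpha_\Sigma)$ in terms of $(\omega_1,\omega_2,\vartheta_\epsilon)$ is orthogonal, and the inverse relations are given by $A^{T}$. Reading the columns of $A$ directly gives the three stated formulas. For instance, the coefficients of $\omega_1$ in $\alpha_1,\alpha_2,\alpha_\Sigma$ are $-\overline q$, $(l/l_\epsilon)\overline{r}\,\overline{p}$ and $(l/l_\epsilon)\overline p$. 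As a sanity check, I will substitute back using $\overline p^2+\overline q^2=1$ and $(l/l_\epsilon)^2(1+\overline r^2)=1$ to confirm orthogonality.

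\textbf{Second group: the wedge products.} I will expand $\omega_1\wedge\omega_2$, $\omega_1\wedge\vartheta_\epsilon$ and $\omega_2\wedge\vartheta_\epsilon$ bilinearly using the first group and collect the coefficients of $\alpha_\Sigma\wedge\alpha_1$, $\alpha_\Sigma\wedge\alpha_2$ and $\alpha_1\wedge\alpha_2$.

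For $\omega_1\wedge\omega_2$:
\begin{itemize}
\item The $\alpha_\Sigma\wedge\alpha_2$ term cancels, since $\overline{p}\,\overline{q}\overline r-\overline{q}\,\overline{p}\overline r=0$.
\item The $\alpha_\Sigma\wedge\alpha_1$ coefficient is $(l/l_\epsilon)(\overline p^2+\overline q^2)=l/l_\epsilon$.
\item The $\alpha_1\wedge\alpha_2$ coefficient is $-(l/l_\epsilon)\overline r$.
\end{itemize}

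For the two products with $\vartheta_\epsilon$:
\begin{itemize}
\item The $\alpha_\Sigma\wedge\alpha_2$ coefficient in $\omega_1\wedge\vartheta_\epsilon$ is $-(l/l_\epsilon)^2\overline p(1+\overline r^2)$, which simplifies to $-\overline p$ by the identity above.
\item The remaining coefficients are immediate.
\end{itemize}
An equivalent route is to compute $\omega_i\wedge\omega_j=\sum\det(\text{minors})\,\alpha\wedge\alpha$, using that $2\times 2$ minors of an orthogonal matrix equal cofactors. This explains why these coefficients coincide with entries of $A$ up to sign, and serves as a cross-check.

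\textbf{Third group: the exterior derivatives.} I will substitute the second group into the earlier formulas
\[
d\omega_1=-a_3\,\omega_1\wedge\omega_2-\epsilon(a_1\,\omega_1\wedge\vartheta_\epsilon+a_2\,\omega_2\wedge\vartheta_\epsilon),
\]
and similarly for $d\omega_2$ (with $b_i$) and for $d\vartheta_\epsilon=-(c/\epsilon)\,\omega_1\wedge\omega_2$. Collecting terms:
\begin{itemize}
\item The coefficient of $\alpha_\Sigma\wedge\alpha_1$ in $d\omega_1$ is $-(l/l_\epsilon)a_3-\epsilon(l/l_\epsilon)\overline r(\overline q a_1-\overline p a_2)$.
\item The coefficient of $\alpha_\Sigma\wedge\alpha_2$ is $\epsilon(\overline p a_1+\overline q a_2)$.
\item The coefficient of $\alpha_1\wedge\alpha_2$ is $(l/l_\epsilon)\overline r a_3-\epsilon(l/l_\epsilon)(\overline q a_1-\overline p a_2)$.
\end{itemize}
The $d\vartheta_\epsilon$ formula is immediate.

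\textbf{Expected obstacle.} The only real difficulty is sign and bookkeeping. I expect a mismatch in the $\alpha_1\wedge\alpha_2$ coefficient: the stated formula carries an extra factor $\overline r$ in the $\epsilon$-term. My computation gives $-\epsilon(l/l_\epsilon)(\overline q a_1-\overline p a_2)$, so I would recheck this term carefully against the displayed statement and, if needed, report the corrected coefficient (or a typo). Apart from this, the argument requires only the two normalisation identities.
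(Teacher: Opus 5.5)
Your proposal is correct and is the direct linear-algebra computation the paper relies on; the paper states this proposition without proof. The coframe matrix is orthogonal, so transposition inverts it and bilinear expansion gives the first two groups, and substituting into the structure equations for $d\omega_1,d\omega_2,d\vartheta_\epsilon$ gives the third. The discrepancy you anticipate is genuine: evaluating directly gives $d\omega_1(E_1,E_2)=(l/l_\epsilon)\bigl(\overline{r}a_3-\epsilon(\overline{q}a_1-\overline{p}a_2)\bigr)$, and analogously for $d\omega_2$ with the $b_i$. So the extra factor $\overline{r}$ in the $\epsilon$-part of the $\alpha_1\wedge\alpha_2$ coefficients is a misprint in the statement. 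It carries over into the $\epsilon$-part of $A_3$ in Proposition \ref{McoF ext der}, but it is harmless: that term vanishes as $\epsilon\to0^+$, and it is identically zero in both model groups, where $a_1=a_2=b_1=b_2=0$.
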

\begin{prop}\label{McoF ext der}
    Let $\alpha_1$, $\alpha_2$ and $\alpha_\Sigma$ be as above; then the differentials $d\alpha_1$, $d\alpha_2$ and $d\alpha_\Sigma$ are given by:    
    \begin{eqnarray*}
    d\alpha_1&=&A_1\,\alpha_\Sigma\wedge\alpha_1+A_2\,\alpha_\Sigma\wedge\alpha_2+A_3\,\alpha_1\wedge\alpha_2,\\
    d\alpha_2&=&B_1\,\alpha_\Sigma\wedge\alpha_1+B_2\,\alpha_\Sigma\wedge\alpha_2+B_3\,\alpha_1\wedge\alpha_2,\\
    d\alpha_\Sigma&=&C_1\,\alpha_\Sigma\wedge\alpha_1+C_2\,\alpha_\Sigma\wedge\alpha_2,
\end{eqnarray*}
where:
\begin{equation}
    \begin{aligned}\label{ABC}
    A_1&=(l/l_\epsilon)\left(H^H+a_3\overline{q}-b_3\overline{p}+\epsilon \overline{r}(b_2\overline{p}^2+a_1\overline{q}^2-(a_2+b_1)\overline{pq})\right),\\
    A_2&=\overline{q}T_\epsilon(\overline{p})-\overline{p}T_\epsilon(\overline{q})+\epsilon\left((b_2-a_1)\overline{pq}+b_1\overline{p}^2-a_2\overline{q}^2\right),\\
    A_3&=(l/l_\epsilon)\left(-\overline{r}(H^H+a_3\overline{q}-b_3\overline{p})+\epsilon \overline{r}(b_2\overline{p}^2+a_1\overline{q}^2-(b_1+a_2)\overline{pq})\right),\\
    B_1&=\overline{r}E_1(\log(l_\epsilon/l))-E_1(\overline{r})+c/\epsilon,\\
    B_2&=n_\Sigma(\log(l_\epsilon/l))-\overline{r}E_2(\log(l_\epsilon/l))-E_2(\overline{r}),\\
    B_3&=E_1(\log(l_\epsilon/l))-\overline{r}(c/\epsilon),\\
    C_1&=E_1(\log l_\epsilon),\\
    C_2&=E_2(\log l_\epsilon).
\end{aligned}
\end{equation}
\end{prop}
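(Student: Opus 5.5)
The plan is to compute each differential directly from the definitions of $\alpha_1,\alpha_2,\alpha_\Sigma$ in Definition \ref{E_1, E_2, dual}. We use the Leibniz rule $d(f\omega)=df\wedge\omega+f\,d\omega$ and then rewrite everything in the new coframe. The differentials $d\omega_1,d\omega_2,d\vartheta_\epsilon$ are already available in the new basis from Proposition \ref{wedge relations coframe}. For a function $f$ we expand
$$df=n_\Sigma(f)\,\alpha_\Sigma+E_1(f)\,\alpha_1+E_2(f)\,\alpha_2,$$
and we express $\omega_1,\omega_2,\vartheta_\epsilon$ through the $\alpha$'s, again by Proposition \ref{wedge relations coframe}. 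The coefficients are then read off in the basis $\alpha_\Sigma\wedge\alpha_1,\ \alpha_\Sigma\wedge\alpha_2,\ \alpha_1\wedge\alpha_2$.

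\textbf{Step 1: $d\alpha_1$.} We have
$$d\alpha_1=-d\overline{q}\wedge\omega_1+d\overline{p}\wedge\omega_2-\overline{q}\,d\omega_1+\overline{p}\,d\omega_2.$$
The derivatives $n_\Sigma,E_1,E_2$ of $\overline{p},\overline{q}$ are given by Lemma \ref{MF ids}, in terms of $H^H$, $Q^H$ and $T_\epsilon(\overline{p}),T_\epsilon(\overline{q})$. After substituting, we simplify with $\overline{p}^2+\overline{q}^2=1$ and its consequence $\overline{p}\,d\overline{p}+\overline{q}\,d\overline{q}=0$.
\begin{itemize}
\item In the $\alpha_\Sigma\wedge\alpha_2$ component, the $Q^H$ terms should cancel, leaving $\overline{q}T_\epsilon(\overline{p})-\overline{p}T_\epsilon(\overline{q})$.
\item In the other two components, $H^H$ appears through the $E_1$-derivatives.
\item The structure-constant terms come from $-\overline{q}\,d\omega_1+\overline{p}\,d\omega_2$. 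These give $a_3\overline{q}-b_3\overline{p}$ together with the $\epsilon$-terms in $a_1,a_2,b_1,b_2$.
\end{itemize}
The relation $a_1+b_2=0$ from Proposition \ref{Prop: abc} may be used to match the stated form.

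\textbf{Step 2: $d\alpha_\Sigma$.} Rather than expanding directly, we use that $\alpha_\Sigma=du/l_\epsilon$. Indeed, $\nabla_\epsilon u$ has components $(p,q,r)$ in the orthonormal frame, so $du=p\,\omega_1+q\,\omega_2+r\,\vartheta_\epsilon=l_\epsilon\,\alpha_\Sigma$. Hence
$$d\alpha_\Sigma=-d\log l_\epsilon\wedge\alpha_\Sigma=E_1(\log l_\epsilon)\,\alpha_\Sigma\wedge\alpha_1+E_2(\log l_\epsilon)\,\alpha_\Sigma\wedge\alpha_2,$$
which gives $C_1$, $C_2$ and no $\alpha_1\wedge\alpha_2$ term.

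\textbf{Step 3: $d\alpha_2$.} By Proposition \ref{wedge relations coframe}, $\vartheta_\epsilon=(l/l_\epsilon)(\overline{r}\alpha_\Sigma-\alpha_2)$, so
$$\alpha_2=\overline{r}\,\alpha_\Sigma-(l_\epsilon/l)\,\vartheta_\epsilon.$$
Differentiating,
$$d\alpha_2=d\overline{r}\wedge\alpha_\Sigma+\overline{r}\,d\alpha_\Sigma-(l_\epsilon/l)\,d\log(l_\epsilon/l)\wedge\vartheta_\epsilon-(l_\epsilon/l)\,d\vartheta_\epsilon.$$
\begin{itemize}
\item We substitute $d\alpha_\Sigma$ from Step 2 and $d\vartheta_\epsilon$ from Proposition \ref{wedge relations coframe}.
\item We write $(l_\epsilon/l)\vartheta_\epsilon=\overline{r}\alpha_\Sigma-\alpha_2$.
\item The last term produces $c/\epsilon$ in $B_1$ and $-\overline{r}c/\epsilon$ in $B_3$.
\item The terms $\overline{r}E_i(\log l_\epsilon)$ coming from $\overline{r}\,d\alpha_\Sigma$ combine with the $E_i(\log(l_\epsilon/l))$ terms.
\end{itemize}
The identity to use is $\log l_\epsilon=\log(l_\epsilon/l)+\log l$. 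If a residual $E_i(\log l)$ remains, it should cancel against derivatives of $\overline{r}=r/l$, or be absorbed by this rewriting; this is exactly the bookkeeping to check against the stated $B_1,B_2,B_3$.

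The main obstacle is this last bookkeeping in $B_1,B_2,B_3$ and the $\epsilon$-terms of $A_1,A_3$, where sign errors and factors of $l/l_\epsilon$ are easy to make. We would first verify each coefficient in the Heisenberg case (all constants zero except $c$) as a sanity check, and then carry out the general computation.
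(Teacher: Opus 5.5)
Your route is the same as the paper's: Leibniz expansion of $d\alpha_1$ via Lemma \ref{MF ids}, $\alpha_\Sigma=du/l_\epsilon$ for $d\alpha_\Sigma$, and $\alpha_2=\overline{r}\alpha_\Sigma-(l_\epsilon/l)\vartheta_\epsilon$ for $d\alpha_2$. Step 2 is exactly right, and Step 1 yields $A_1,A_2$ verbatim; you never need $a_1+b_2=0$.

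\textbf{The gap: Step 3 cannot match the stated $B_1,B_2$.} The bookkeeping you defer at the end of Step 3 cannot be closed, because the printed $B_1,B_2$ are not what the expansion produces. Write $L=\log(l_\epsilon/l)$ and $(l_\epsilon/l)\vartheta_\epsilon=\overline{r}\alpha_\Sigma-\alpha_2$. Then $\overline{r}\,d\alpha_\Sigma$ contributes $+\overline{r}E_i(\log l_\epsilon)$, and $-dL\wedge(\overline{r}\alpha_\Sigma-\alpha_2)$ contributes $+\overline{r}E_i(L)$, to the coefficient of $\alpha_\Sigma\wedge\alpha_i$. Hence
\[
B_1=\overline{r}E_1(\log l_\epsilon)+\overline{r}E_1(L)-E_1(\overline{r})+\frac{c}{\epsilon},\qquad
B_2=n_\Sigma(L)+\overline{r}E_2(L)+\overline{r}E_2(\log l_\epsilon)-E_2(\overline{r}),
\]
with $B_3,C_1,C_2$ as stated.

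Nothing absorbs the extra terms. Since $(l_\epsilon/l)^2=1+\overline{r}^2$, you can simplify to $B_1=\overline{r}E_1(\log l_\epsilon)-E_1(\overline{r})/(1+\overline{r}^2)+c/\epsilon$, but $\overline{r}E_1(\log l_\epsilon)$ survives.

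Your own Heisenberg sanity check already exposes this. Take $u=t$, write $x+iy=\rho e^{i\phi}$ and $S=\sqrt{4\rho^2+\epsilon^2}$. Then $E_1=\partial_\rho$, $\overline{r}=\epsilon/(2\rho)$, $\alpha_1=d\rho$, $\alpha_\Sigma=dt/S$, and $\alpha_2=-(\rho S/\epsilon)\,d\phi-(2\rho/(\epsilon S))\,dt$. Direct differentiation gives $B_1=-16\rho^2/(\epsilon S^2)$, whereas the printed formula gives $-(16\rho^2+2\epsilon^2)/(\epsilon S^2)$. The difference is exactly $\overline{r}E_1(\log l_\epsilon)=2\epsilon/S^2$.

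The paper's own proof has the same slip. Its penultimate display for $d\alpha_2$ agrees with the expansion above, but the final identification drops $\overline{r}E_i(\log l_\epsilon)$ and flips the sign of $\overline{r}E_2(L)$.

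\textbf{A related caution for Step 1.} If you take the new-basis formulas for $d\omega_1,d\omega_2$ from Proposition \ref{wedge relations coframe} on trust, you inherit an error there. Evaluating directly,
\[
d\omega_1(E_1,E_2)=(l/l_\epsilon)\bigl(\overline{r}a_3-\epsilon(\overline{q}a_1-\overline{p}a_2)\bigr),
\]
and similarly for $d\omega_2$, with no factor $\overline{r}$ on the $\epsilon$-terms. So the correct $A_3$ carries $\epsilon\,(b_2\overline{p}^2+a_1\overline{q}^2-(a_2+b_1)\overline{pq})$ rather than $\epsilon\overline{r}(\cdots)$. This is invisible in $\mathbb{H}$ and $\Aa$, where $a_1=a_2=b_1=b_2=0$.

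None of these corrections changes the later limits: $K^\epsilon_\Sigma$ involves only $A_3$ and $B_3$, and both versions of $A_3$ and $B_2$ tend to $0$. But as written, Step 3 asserts an agreement that a correct computation will not deliver. What you can actually prove is the proposition with corrected $B_1$, $B_2$ (and $A_3$).
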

\begin{proof}
Since $\alpha_\Sigma=du/l_\epsilon$, we have
\begin{eqnarray*}
    d\alpha_\Sigma=-(1/l_\epsilon^2) dl_\epsilon\wedge  du&=&
    \alpha_\Sigma\wedge d(\log l_\epsilon)\\
    &=&E_1(\log l_\epsilon)\,\alpha_\Sigma\wedge \alpha_1+
    E_2(\log l_\epsilon)\,\alpha_\Sigma\wedge \alpha_2.
\end{eqnarray*}
Next,
\begin{eqnarray*}
    d\alpha_1&=&-d\overline{q}\wedge\omega_1+d\overline{p}\wedge\omega_2-\overline{q}\,d\omega_1+\overline{p}\,d\omega_2\\
    &=&-\left(n_\Sigma(\overline{q})\alpha_\Sigma+E_1(\overline{q})\alpha_1+E_2(\overline{q})\alpha_2\right)\wedge\left((l/l_\epsilon)\overline{p}\alpha_\Sigma-\overline{q}\alpha_1+(l/l_\epsilon)\overline{rp}\alpha_2\right)\\
    &&+\left(n_\Sigma(\overline{p})\alpha_\Sigma+E_1(\overline{p})\alpha_1+E_2(\overline{p})\alpha_2\right)\wedge\left((l/l_\epsilon)\overline{q}\alpha_\Sigma+\overline{p}\alpha_1+(l/l_\epsilon)\overline{rq}\alpha_2\right)\\
    &&+\overline{q}(l/l_\epsilon)(a_3+\epsilon\overline{r}(\overline{q}a_1-\overline{p}a_2))\,\alpha_\Sigma\wedge\alpha_1\\
     &&-\overline{q}\epsilon((\overline{p}a_1+\overline{q}a_2))\,\alpha_\Sigma\wedge\alpha_2\\
     &&+\overline{q}(l/l_\epsilon)(-\overline{r}a_3+\epsilon\overline{r}(\overline{q}a_1-\overline{p}a_2))\,\alpha_1\wedge\alpha_2,\\
     &&-\overline{p}(l/l_\epsilon)(b_3+\epsilon\overline{r}(\overline{q}b_1-\overline{p}b_2))\,\alpha_\Sigma\wedge\alpha_1\\
     &&+\overline{p}\epsilon((\overline{p}b_1+\overline{q}b_2))\,\alpha_\Sigma\wedge\alpha_2\\
     &&-\overline{p}(l/l_\epsilon)(-\overline{r}b_3+\epsilon\overline{r}(\overline{q}b_1-\overline{p}b_2))\,\alpha_1\wedge\alpha_2 \\
     &=& A_1\,\alpha_\Sigma\wedge\alpha_1+A_2\,\alpha_\Sigma\wedge\alpha_2+A_3\,\alpha_1\wedge\alpha_2,
\end{eqnarray*}
where $A_i$ for $i\in \{1,2,3\}$ are defined as follows: using the identities of Lemma \ref{MF ids} the coefficient of $\alpha_\Sigma\wedge\alpha_1$ is
\begin{eqnarray*}
    A_1&=&\overline{q}n_\Sigma(\overline{q})+\overline{p}n_\Sigma(\overline{p})\\
    &&+(l/l_\epsilon)(\overline{p}E_1(\overline{q})-\overline{q}E_1(\overline{p}))\\
    &&+(l/l_\epsilon)(a_3\overline{q}-b_3\overline{p}+\epsilon\overline{r}(a_1\overline{q}^2+b_2\overline{p}^2-\overline{qp}(a_2+b_1)))\\
&=&(l/l_\epsilon)\left(H^H+a_3\overline{q}-b_3\overline{p}+\epsilon \overline{r}(b_2\overline{p}^2+a_1\overline{q}^2-(a_2+b_1)\overline{pq})\right).
\end{eqnarray*}
Similarly, the coefficient of $\alpha_\Sigma\wedge\alpha_2$ is
\begin{eqnarray*}
    A_2&=&(l/l_\epsilon)\left(-\overline{rp}n_\Sigma(\overline{q})+\overline{rq}n_\Sigma(\overline{p})+\overline{p}E_2(\overline{q})-\overline{q}E_2(\overline{p})\right)\\
    &&+\epsilon\left((b_2-a_1)\overline{pq}+b_1\overline{p}^2-a_2\overline{q}^2\right)\\
    &=&\overline{q}T_\epsilon(\overline{p})-\overline{p}T_\epsilon(\overline{q})+\epsilon\left((b_2-a_1)\overline{pq}+b_1\overline{p}^2-a_2\overline{q}^2\right).
\end{eqnarray*}
Finally, the coefficient of $\alpha_1\wedge\alpha_2$ is
\begin{eqnarray*}
    A_3&=&(l/l_\epsilon)\left(\overline{r}(\overline{q}E_1(\overline{p})-\overline{p}E_1(\overline{q})+b_3p-a_3q)\right)-\overline{q}E_2(\overline{q})-\overline{p}E_2(\overline{p})\\
    &&+(l/l_\epsilon)\epsilon \overline{r}\left(b_2\overline{p}^2+a_1\overline{q}^2-(b_1+a_2)\overline{pq})\right),\\
    &=&(l/l_\epsilon)\left(-\overline{r}(H^H+a_3\overline{q}-b_3\overline{p})+\epsilon \overline{r}(b_2\overline{p}^2+a_1\overline{q}^2-(b_1+a_2)\overline{pq})\right),
\end{eqnarray*}
and the formula for $d\alpha_1$ follows.
Finally, to calculate $d\alpha_2$, we first observe that
$$
\alpha_2=\overline{r}\alpha_\Sigma-\frac{l_\epsilon}{l}\vartheta_\epsilon,
$$
hence
\begin{eqnarray*}
d\alpha_2&=&d\overline{r}\wedge\alpha_\Sigma+\overline{r}d\alpha_\Sigma-d(l_\epsilon/l)\wedge \vartheta_\epsilon-(l_\epsilon/l) d\vartheta_\epsilon\\
&=&\left(n_\Sigma(\overline{r})\,\alpha_\Sigma+E_1(\overline{r})\,\alpha_1+E_2(\overline{r})\,\alpha_2\right)\wedge\alpha_\Sigma\\
&&+\overline{r}E_1(\log l_\epsilon)\,\alpha_\Sigma\wedge \alpha_1+\overline{r}E_2(\log l_\epsilon)\,\alpha_\Sigma\wedge \alpha_2\\
&&-\left( n_{\Sigma} (l_\epsilon/l)\,\alpha_{\Sigma}+E_1(l_\epsilon/l)\,\alpha_1 +E_2 (l_\epsilon/l)\, a_2 \right) \wedge \left( (l/l_\epsilon)\overline{r} \alpha_{\Sigma} -(l/l_\epsilon) \,\alpha_2 \right) \\
&&+(c/\epsilon )(\alpha_\Sigma\wedge\alpha_1-\overline{r}\alpha_1\wedge\alpha_2)\\
&=&-E_1(\overline{r})\,\alpha_\Sigma\wedge\alpha_1-
    E_2(\overline{r})\,\alpha_\Sigma\wedge\alpha_2\\
    &&+\overline{r}E_1(\log l_\epsilon)\,\alpha_\Sigma\wedge \alpha_1+\overline{r}E_2(\log l_\epsilon)\,\alpha_\Sigma\wedge \alpha_2\\
    &&+\overline{r}E_1(\log(l_\epsilon/l))\,\alpha_\Sigma\wedge\alpha_1+\left(n_\Sigma(\log(l_\epsilon/l))+\overline{r}E_2(\log(l_\epsilon/l))\right)\,\alpha_\Sigma\wedge\alpha_2 +E_1(\log(l_\epsilon/l))\,\alpha_1\wedge\alpha_2\\
    &&+(c/\epsilon )(\alpha_\Sigma\wedge\alpha_1-\overline{r}\alpha_1\wedge\alpha_2) \\
    &=& B_1\,\alpha_\Sigma\wedge\alpha_1+B_2\,\alpha_\Sigma\wedge\alpha_2+B_3\,\alpha_1\wedge\alpha_2.
\end{eqnarray*}
    \end{proof}
\begin{cor}
The Lie brackets   
$[n_\Sigma,E_i]$, $i\in\{1,2\},$\ and $[E_1,E_2]$,
are given by
\begin{eqnarray*}
    [n_\Sigma,E_1]&=&-C_1n_\Sigma-A_1E_1-B_1E_2,
    \end{eqnarray*}
    \begin{eqnarray*}
[n_\Sigma,E_2]&=&-C_2n_\Sigma-A_2E_1-B_2E_2,
    \end{eqnarray*}
    \begin{eqnarray*}
[E_1,E_2]&=&-A_3E_1-B_3E_2.\\
\end{eqnarray*}
\end{cor}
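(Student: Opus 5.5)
The plan is to read the brackets directly off the exterior derivatives in Proposition \ref{McoF ext der}, using formula \eqref{domegaXY} applied to the coframe $\{\alpha_1,\alpha_2,\alpha_\Sigma\}$ and its dual frame $\{E_1,E_2,n_\Sigma\}$. Since the pairings $\alpha_i(E_j)$, $\alpha_i(n_\Sigma)$, $\alpha_\Sigma(E_j)$, $\alpha_\Sigma(n_\Sigma)$ are all constant ($0$ or $1$), \eqref{domegaXY} reduces, for any $\alpha$ in the coframe and any two frame vectors $V,W$, to
$$\alpha([V,W])=-d\alpha(V,W).$$
Hence each coefficient of a bracket in the frame $\{E_1,E_2,n_\Sigma\}$ is minus a component of $d\alpha_1$, $d\alpha_2$ or $d\alpha_\Sigma$.

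I will use the convention $(\beta\wedge\gamma)(V,W)=\beta(V)\gamma(W)-\beta(W)\gamma(V)$, consistent with \eqref{domegaXY}. With it, $(\alpha_\Sigma\wedge\alpha_1)(n_\Sigma,E_1)=1$ and $(\alpha_\Sigma\wedge\alpha_2)(n_\Sigma,E_2)=1$. Also $(\alpha_1\wedge\alpha_2)(E_1,E_2)=1$, and every other basic wedge vanishes on these pairs.

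First take the pair $(n_\Sigma,E_1)$. Evaluating gives $d\alpha_1(n_\Sigma,E_1)=A_1$, $d\alpha_2(n_\Sigma,E_1)=B_1$ and $d\alpha_\Sigma(n_\Sigma,E_1)=C_1$. Therefore
$$[n_\Sigma,E_1]=-A_1E_1-B_1E_2-C_1n_\Sigma.$$
The pair $(n_\Sigma,E_2)$ works the same way, picking up the $\alpha_\Sigma\wedge\alpha_2$ coefficients $A_2,B_2,C_2$. For $(E_1,E_2)$ only the $\alpha_1\wedge\alpha_2$ terms contribute, giving $A_3$ and $B_3$. The $n_\Sigma$-component of $[E_1,E_2]$ is $-d\alpha_\Sigma(E_1,E_2)$, which vanishes because $d\alpha_\Sigma$ has no $\alpha_1\wedge\alpha_2$ term. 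This matches the fact that $E_1,E_2$ are tangent to the level sets of $u$, so their bracket is tangent as well.

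There is no real obstacle here. The one point needing care is keeping the sign and normalisation convention for the wedge product consistent with \eqref{domegaXY}. The computation itself is a single line per coefficient.
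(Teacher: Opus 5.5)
Your proposal is correct and follows the same route as the paper: expand each bracket in the frame $\{E_1,E_2,n_\Sigma\}$ and use \eqref{domegaXY} with constant pairings, so that $\alpha([V,W])=-d\alpha(V,W)$, then read the coefficients off Proposition \ref{McoF ext der}. The paper writes this out only for $[E_1,E_2]$ and declares the other two brackets analogous, whereas you also fix the wedge convention and carry out all three cases explicitly.
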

\begin{proof}
We will only calculate $[E_1,E_2]$, since the other two identities are proved in an analogous manner. We have
$$
[E_1,E_2]=\alpha_\Sigma([E_1,E_2])\,n_\Sigma+\alpha_1([E_1,E_2])\,E_1+\alpha_2([E_1,E_2])\,E_2.
$$
On the other hand, formula \eqref{domegaXY} yields
\begin{eqnarray*}
    d\alpha_\Sigma(E_1,E_2)&=&-\alpha_\Sigma([E_1,E_2]),\\
    d\alpha_1(E_1,E_2)&=&-\alpha_1([E_1,E_2]),\\
    d\alpha_2(E_1,E_2)&=&-\alpha_2([E_1,E_2]),
\end{eqnarray*}
and our formula follows.
\end{proof}
Denote by $\nabla$ the Levi-Civita connection of $(G,g_\epsilon)$ and  consider the orthonormal frame $\{E_1,E_2,n_\Sigma\}$. Let $j\in\{1,2\}$, $k\in\{1,2,3\}$ and let $X_k\in\{E_1,E_2,n_\Sigma\}$. Recall that the {\it connection $1$-forms} $\eta_j^k$ are the differential $1$-forms defined by:
\begin{equation*}
    \eta_j^k(E_i)=-g_\epsilon(\nabla_{E_i}X_k,E_j),\quad i\in{1,2}.
\end{equation*}
By the definition of the Levi-Civita connection it follows that $\eta^k_j=-\eta_j^k$, for all $k$ and $j$.
\begin{prop}\label{Cartan I SE}
Let $\alpha_1$, $\alpha_2$ and $\alpha_\Sigma$ be defined as above, let $j\in\{1,2\}$, $k\in\{1,2,3\}$ and let $\eta_j^k$ be the connection $1$-forms. Then the following hold:
    \begin{eqnarray*}
    d\alpha_1 &=& \eta^2_1 \wedge \alpha_2 +\eta^3_1 \wedge \alpha_{\Sigma}, \\
    d\alpha_2 &=& -\eta^2_1 \wedge \alpha_1 +\eta^3_2 \wedge\alpha_{\Sigma}, \\
    d\alpha_{\Sigma} &=& -\eta^3_1 \wedge \alpha_1 -\eta^3_2 \wedge\alpha_2,
\end{eqnarray*}
where
\begin{equation}
    \begin{aligned}\label{conn 1-forms}
    \eta_1^2&=A_3\,\alpha_1+B_3\,\alpha_2+\frac{A_2-B_1}{2}\,\alpha_\Sigma,\\
    \eta_1^3&=-A_1\,\alpha_1+\frac{-A_2-B_1}{2}\,\alpha_2-C_1\,\alpha_\Sigma,\\
    \eta_2^3&=\frac{-A_2-B_1}{2}\,\alpha_1-B_2\,\alpha_2-C_2\,\alpha_\Sigma,
\end{aligned}
\end{equation}
and $A_1,A_2,A_3$, $B_1,B_2,B_3$ and $C_1,C_2$ are given by equations \eqref{ABC}.
\end{prop}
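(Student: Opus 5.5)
The plan has two parts. First, the three structural equations hold for \emph{some} skew-symmetric family of connection forms. Second, such a family is unique, so it suffices to check that the forms in \eqref{conn 1-forms} satisfy the equations; that check uses only Proposition \ref{McoF ext der}.

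\textbf{Existence.} Since $\{E_1,E_2,n_\Sigma\}$ is $g_\epsilon$-orthonormal with dual coframe $\{\alpha_1,\alpha_2,\alpha_\Sigma\}$, the Levi-Civita connection $\nabla$ is metric, which gives the skew-symmetry $\eta^k_j=-\eta^j_k$ recalled above. It is also torsion free, so $d\alpha(U,V)=(\nabla_U\alpha)(V)-(\nabla_V\alpha)(U)$ for any 1-form $\alpha$. Expanding $\nabla\alpha_1,\nabla\alpha_2,\nabla\alpha_\Sigma$ in the coframe gives exactly the three displayed equations of Cartan's first structural system. This is the same mechanism that produced the forms $\theta^2_1,\theta^3_1,\theta^3_2$ for the frame $\{X,Y,T_\epsilon\}$ earlier.

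\textbf{Uniqueness.} I will use the standard algebraic lemma: if $\{\beta_1,\beta_2,\beta_3\}$ is a coframe and $\xi^k_j$ is skew-symmetric with $\sum_j\xi^k_j\wedge\beta_j=0$ for every $k$, then $\xi\equiv 0$. To prove it, write $\xi^k_j=\sum_i c^k_{ji}\beta_i$. The hypothesis makes $c^k_{ji}$ symmetric in $(j,i)$, while skew-symmetry makes it antisymmetric in $(k,j)$. A tensor with these two properties vanishes, by the usual cyclic argument. Hence any skew-symmetric triple satisfying the structural equations must equal the Levi-Civita connection forms.

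\textbf{Verification.} Substitute the candidate forms \eqref{conn 1-forms} into the right-hand sides and compare with $d\alpha_1,d\alpha_2,d\alpha_\Sigma$ from Proposition \ref{McoF ext der}, rewriting every wedge in the basis $\alpha_\Sigma\wedge\alpha_1$, $\alpha_\Sigma\wedge\alpha_2$, $\alpha_1\wedge\alpha_2$.
\begin{itemize}
\item In $\eta^2_1\wedge\alpha_2+\eta^3_1\wedge\alpha_\Sigma$:
 \begin{itemize}
 \item the $\alpha_1\wedge\alpha_2$ coefficient is $A_3$;
 \item the term $-A_1\,\alpha_1\wedge\alpha_\Sigma$ gives $A_1\,\alpha_\Sigma\wedge\alpha_1$;
 \item the $\alpha_\Sigma\wedge\alpha_2$ coefficient is $\tfrac{A_2-B_1}{2}+\tfrac{A_2+B_1}{2}=A_2$.
 \end{itemize}
 This recovers $d\alpha_1$.
\item In $-\eta^2_1\wedge\alpha_1+\eta^3_2\wedge\alpha_\Sigma$:
 \begin{itemize}
 \item the $\alpha_1\wedge\alpha_2$ coefficient is $B_3$;
 \item the $\alpha_\Sigma\wedge\alpha_2$ coefficient is $B_2$;
 \item the $\alpha_\Sigma\wedge\alpha_1$ coefficient is $-\tfrac{A_2-B_1}{2}+\tfrac{A_2+B_1}{2}=B_1$.
 \end{itemize}
 This recovers $d\alpha_2$.
\item In $-\eta^3_1\wedge\alpha_1-\eta^3_2\wedge\alpha_2$:
 \begin{itemize}
 \item the $A_1$ and $B_2$ terms die, being multiples of $\alpha_i\wedge\alpha_i$;
 \item the two $\tfrac{A_2+B_1}{2}$ contributions to $\alpha_1\wedge\alpha_2$ cancel;
 \item what remains is $C_1\,\alpha_\Sigma\wedge\alpha_1+C_2\,\alpha_\Sigma\wedge\alpha_2=d\alpha_\Sigma$.
 \end{itemize}
\end{itemize}

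\textbf{Main obstacle.} The only delicate point is bookkeeping of signs and conventions. The paper's definition $\eta^k_j(E_i)=-g_\epsilon(\nabla_{E_i}X_k,E_j)$ must be matched with the sign convention in the structural equations. I would settle this by appealing to uniqueness rather than computing $\nabla$ directly. The skew-symmetric solution of the stated system is unique, and the convention has been fixed so that the Levi-Civita forms satisfy that system, exactly as for the $\theta$'s. Therefore the explicit verification above identifies the $\eta$'s.
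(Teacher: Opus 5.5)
Your proposal is correct and is essentially the paper's argument. The paper expands the $\eta$'s in the coframe $\{\alpha_1,\alpha_2,\alpha_\Sigma\}$, matches coefficients against Proposition \ref{McoF ext der} to obtain nine linear relations, and solves them; you run the same coefficient matching as a verification and let the standard uniqueness lemma replace the explicit solve, and you also justify the structural system from the torsion-free and metric properties under the convention $\eta^k_j(E_i)=-g_\epsilon(\nabla_{E_i}X_k,E_j)$, a step the paper simply takes for granted.
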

\begin{proof}
  We write the $1$-forms $\eta_j^k$ as linear combinations of the elements of the coframe $\{\alpha_1,\alpha_2,\alpha_\Sigma\}$:
\begin{eqnarray*}
\eta^2_1 &=& (\eta^2_1)_1\ \alpha_1+(\eta^2_1)_2\ \alpha_2+(\eta^2_1)_3\ \alpha_{\Sigma} \\
\eta^3_1 &=& (\eta^3_1)_1\ \alpha_1+(\eta^3_1)_2\ \alpha_2+(\eta^3_1)_3\ \alpha_{\Sigma} \\
\eta^3_2 &=& (\eta^3_2)_1\ \alpha_1+(\eta^3_2)_2\ \alpha_2+(\eta^3_2)_3\ \alpha_{\Sigma}.
\end{eqnarray*}  
Then, combining Proposition \ref{McoF ext der} and Proposition \ref{Cartan I SE} we obtain 
\begin{eqnarray*}
    &&
    (\eta_1^2)_1=A_3,\quad (\eta_1^2)_3-(\eta_1^3)_2=A_2,\quad (\eta_1^3)_1=-A_1,\\
    &&
    (\eta_1^2)_3+(\eta_2^3)_1=-B_1,\quad (\eta_2^3)_2=-B_2,\quad (\eta_1^2)_2=B_3,\\
    &&
    (\eta_1^3)_3=-C_1,\quad( \eta_2^3)_3=-C_2,\quad (\eta_1^3)_2-(\eta_2^3)_1=0,
\end{eqnarray*}
The latter relations result in \eqref{conn 1-forms}.
\end{proof}
The {\it sectional curvatures} of characteristic planes spanned by $E_1,E_2$, $E_1,n_\Sigma$ and $E_2,n_\Sigma$ are given respectively by: 
    \begin{eqnarray*}
     \overline{K}^\epsilon(E_1,E_2) &=& \eta^3_2\wedge\eta^3_1(E_1,E_2)-d\eta^2_1(E_1,E_2),\\
     \overline{K}^\epsilon(E_1,n_\Sigma) &=&-d\eta^3_1(E_1,n_\Sigma)-\eta^3_2\wedge\eta^2_1(E_1,n_\Sigma) ,\\
     \overline{K}^\epsilon(E_2,n_\Sigma) &=& \eta^3_1\wedge\eta^2_1(E_2,n_\Sigma)-d\eta^3_2(E_2,n_\Sigma).
    \end{eqnarray*}
We apply the above formulae to the forms given by \eqref{conn 1-forms} to obtain the following:
\begin{prop} 
Let $\Sigma$ be a regular surface in $(G,g_\epsilon)$ defined by $u=0$ where $u:G\to\R$ is a $\mathcal{C}^2$ function. Let $\{E_1,E_2,n_\Sigma\}$ be the associated frame and let  $A_1$, $A_2$, $A_3$, $B_1$, $B_2$, $B_3$, $C_1$ and $C_2$ be as in \eqref{ABC}. 
Then:
    \begin{eqnarray*}
     \overline{K}^\epsilon(E_1,E_2) &=&  -E_1(B_3)+E_2(A_3)-A_3^2-B_3^2+\frac{(A_2+B_1)^2}{4}-A_1B_2 ,\\
     \overline{K}^\epsilon(E_1,n_\Sigma) &=& E_1(C_1)-n_\Sigma(A_1)-A_1^2-C_1^2+\frac{(A_2+B_1)^2}{4}+C_2A_3,\\
     \overline{K}^\epsilon(E_2,n_\Sigma) &=&
     E_2(C_2)-n_\Sigma(B_2)-B_2^2-C_2^2+\frac{(A_2+B_1)^2}{4}+C_1B_3.
    \end{eqnarray*}
\end{prop}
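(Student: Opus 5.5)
The plan is a direct computation: substitute the connection $1$-forms \eqref{conn 1-forms} of Proposition \ref{Cartan I SE} into the three second-structure-equation expressions defining $\overline{K}^\epsilon(E_1,E_2)$, $\overline{K}^\epsilon(E_1,n_\Sigma)$, $\overline{K}^\epsilon(E_2,n_\Sigma)$. Each $\eta$ is written in the coframe $\{\alpha_1,\alpha_2,\alpha_\Sigma\}$ with coefficients among $A_i,B_i,C_i$. The two ingredients per curvature are a wedge product of two $\eta$'s evaluated on a frame pair, and an exterior derivative $d\eta(U,V)$ with $U,V\in\{E_1,E_2,n_\Sigma\}$.

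The wedge terms are pure algebra: $\eta\wedge\eta'(U,V)=\eta(U)\eta'(V)-\eta(V)\eta'(U)$, read off from the coefficients.

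For the derivative terms I would not differentiate the coefficients through $d$ of the coframe. Instead I use formula \eqref{domegaXY}:
\[
d\eta(U,V)=U(\eta(V))-V(\eta(U))-\eta([U,V]),
\]
with the brackets $[n_\Sigma,E_i]$ and $[E_1,E_2]$ from the Corollary following Proposition \ref{McoF ext der}.

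\textbf{The pair $(E_1,E_2)$.} Here $\eta_1^2(E_1)=A_3$ and $\eta_1^2(E_2)=B_3$. Since $[E_1,E_2]=-A_3E_1-B_3E_2$,
\[
d\eta_1^2(E_1,E_2)=E_1(B_3)-E_2(A_3)+A_3^2+B_3^2 .
\]
For the wedge term we have $\eta_2^3(E_1)=-(A_2+B_1)/2$, $\eta_2^3(E_2)=-B_2$, $\eta_1^3(E_1)=-A_1$ and $\eta_1^3(E_2)=-(A_2+B_1)/2$. Hence
\[
\eta^3_2\wedge\eta^3_1(E_1,E_2)=\frac{(A_2+B_1)^2}{4}-A_1B_2,
\]
using $\eta^3_j=-\eta^j_3$ consistently with the paper's convention $\eta^k_j=-\eta^j_k$. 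Combining these two gives the first formula.

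\textbf{The pairs $(E_1,n_\Sigma)$ and $(E_2,n_\Sigma)$.} These are handled the same way with $\eta_1^3$ and $\eta_2^3$. For example,
\[
d\eta_1^3(E_1,n_\Sigma)=E_1(-C_1)-n_\Sigma(-A_1)-\eta_1^3([E_1,n_\Sigma]),
\]
where $[E_1,n_\Sigma]=C_1n_\Sigma+A_1E_1+B_1E_2$. This produces $-E_1(C_1)+n_\Sigma(A_1)+A_1^2+C_1^2+\tfrac{A_2+B_1}{2}B_1$. The wedge term $\eta_2^3\wedge\eta_1^2(E_1,n_\Sigma)$ contributes products such as $\tfrac{(A_2+B_1)}{2}\cdot\tfrac{(A_2-B_1)}{2}$ and $C_2A_3$. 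Assembling, the mixed terms $\tfrac{(A_2+B_1)B_1}{2}$ and $\tfrac{(A_2^2-B_1^2)}{4}$ should combine into the stated $\tfrac{(A_2+B_1)^2}{4}$, leaving $C_2A_3$. The $(E_2,n_\Sigma)$ case is symmetric, with $B_2$, $C_2$ and $C_1B_3$ in place of $A_1$, $C_1$ and $C_2A_3$.

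\textbf{Main obstacle.} I expect the difficulty to be sign bookkeeping, not any conceptual step. The paper uses both $\eta^k_j$ and $\eta^j_k$ with antisymmetry, and the definition $\eta^k_j(E_i)=-g_\epsilon(\nabla_{E_i}X_k,E_j)$ must be kept aligned with the first structure equations of Proposition \ref{Cartan I SE}.

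To control this, I would first fix the components from \eqref{conn 1-forms} in a table of $\eta(U)$ for all $U$ in the frame. I would then check the cancellation of the $(A_2\pm B_1)$ cross terms in the second and third formulas explicitly, since that is where a sign error would show up. No limits in $\epsilon$ are needed; the identities are exact for each $\epsilon>0$.
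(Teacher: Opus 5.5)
Your approach is the one the paper intends; its proof just says to substitute \eqref{conn 1-forms} into the three curvature expressions. Your computation of $\overline{K}^\epsilon(E_1,E_2)$ is correct. The gap is the step ``the mixed terms \dots\ should combine into the stated $\tfrac{(A_2+B_1)^2}{4}$, leaving $C_2A_3$''. If you carry it out with your own intermediate values, it does not work. Since $\overline{K}^\epsilon(E_1,n_\Sigma)=-d\eta^3_1(E_1,n_\Sigma)-\eta^3_2\wedge\eta^2_1(E_1,n_\Sigma)$, your value of $d\eta^3_1(E_1,n_\Sigma)$ contributes $-\tfrac{(A_2+B_1)B_1}{2}$. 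The wedge term is
\[
-\eta^3_2\wedge\eta^2_1(E_1,n_\Sigma)=-\eta^3_2(E_1)\,\eta^2_1(n_\Sigma)+\eta^3_2(n_\Sigma)\,\eta^2_1(E_1)=\frac{(A_2+B_1)(A_2-B_1)}{4}-C_2A_3 .
\]
So the computation actually yields
\[
\overline{K}^\epsilon(E_1,n_\Sigma)=E_1(C_1)-n_\Sigma(A_1)-A_1^2-C_1^2+\frac{(A_2+B_1)(A_2-3B_1)}{4}-C_2A_3 .
\]
Likewise, $-d\eta^3_2(E_2,n_\Sigma)$ contributes $-\tfrac{(A_2+B_1)A_2}{2}$, and $\eta^3_1\wedge\eta^2_1(E_2,n_\Sigma)=-\tfrac{(A_2+B_1)(A_2-B_1)}{4}+C_1B_3$. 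Hence
\[
\overline{K}^\epsilon(E_2,n_\Sigma)=E_2(C_2)-n_\Sigma(B_2)-B_2^2-C_2^2-\frac{(A_2+B_1)(3A_2-B_1)}{4}+C_1B_3 .
\]
The two cases are not a naive relabelling of each other. Exchanging $E_1$ and $E_2$ flips the signs of $A_3$ and $B_3$, which is why $-C_2A_3$ turns into $+C_1B_3$.

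These expressions differ from the second and third lines of the statement by $-(A_2+B_1)B_1-2C_2A_3$ and $-(A_2+B_1)A_2$ respectively. These differences do not vanish in general: in the paper's frame $B_1=c/\epsilon+O(\epsilon)$ and $A_2=O(\epsilon)$, so $(A_2+B_1)B_1\sim c^2/\epsilon^2$. The second and third identities as stated are therefore false, and your proposal reaches them only by asserting a cancellation that does not occur.

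Two checks confirm the corrected forms.
\begin{itemize}
\item They have exactly the $(a_2+b_1)(a_2-3b_1)$ and $(a_2+b_1)(3a_2-b_1)$ pattern of the paper's own \eqref{SecCurvXT} and \eqref{SecCurvYT}, which come from the same algebra for the frame $\{X,Y,T_\epsilon\}$.
\item Take the vertical plane $x=0$ in $\mathbb{H}$. There $n_\Sigma=X$, $E_1=Y$, $E_2=-T_\epsilon$, and the only nonzero coefficient is $B_1=c/\epsilon=-4/\epsilon$. The stated formula gives $\overline{K}^\epsilon(E_1,n_\Sigma)=B_1^2/4=4/\epsilon^2$. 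The true value is $K_\epsilon(X,Y)=-12/\epsilon^2=-3B_1^2/4$, which is what the corrected formula gives.
\end{itemize}
The sign of the $C_2A_3$ term can be checked on the flat frame $E_1=(\rho\sin\phi)^{-1}\partial_\theta$, $E_2=\partial_\rho$, $n_\Sigma=\rho^{-1}\partial_\phi$ of spherical coordinates in $\mathbb{R}^3$. There $A_1=\cot\phi/\rho$, $A_3=C_2=-1/\rho$, and all other coefficients vanish; the stated line gives $2/\rho^2$ instead of $0$.

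Only the first identity enters the Gauss equation for $K^\epsilon_\Sigma$, so nothing later in the paper is affected. Still, you should state and prove the corrected second and third identities rather than the ones given.
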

The second fundamental form $\mathbf{II}^\epsilon$ of $\Sigma$ is given by:
\begin{equation*}
\mathbf{II}^\epsilon=\begin{pmatrix}
   -g_\epsilon(\nabla^\epsilon_{E_1}n_\Sigma, E_1) & -g_\epsilon(\nabla^\epsilon_{E_1}n_\Sigma, E_2) \\
   \\
   -g_\epsilon(\nabla^\epsilon_{E_2}n_\Sigma, E_1) & -g_\epsilon(\nabla^\epsilon_{E_2}n_\Sigma, E_2)
\end{pmatrix}    .
\end{equation*}
Using \eqref{conn 1-forms}, we calculate explicitly:
\begin{equation*}
\mathbf{II}^\epsilon=\begin{pmatrix}
    \eta_1^3(E_1) & \eta_2^3(E_1) \\
    \\
    \eta_2^3(E_1) & \eta_2^3(E_2)
\end{pmatrix}=\begin{pmatrix}
   -A_1 & -\frac{1}{2}(A_2+B_1) \\
   \\
   -\frac{1}{2}(A_2+B_1) & -B_2
\end{pmatrix}    .
\end{equation*}
The \textit{Riemannian mean curvature} $H^\epsilon$ of $\Sigma$ is
\begin{equation*}
   H^\epsilon_\Sigma:=-\text{Trace}(\mathbf{II^\epsilon})=A_1+B_2.
\end{equation*}
Furthermore, the \textit{Gaussian curvature} $K^\epsilon_\Sigma$ is given by the Gauss equation (see Theorem 2.5, Chapter 6 in \cite{DC}):
\begin{equation*}
K^\epsilon_\Sigma:={\overline K}^\epsilon(E_1,E_2)+\det({\bf II}^\epsilon)=-E_1(B_3)+E_2(A_3)-A_3^2-B^2_3.   
\end{equation*}
\subsection{Notions of curature}\label{sec:not-curv}
The preparatory discussion of the previous section now leads to the following main definitions. 
\begin{defn}\label{Def-Curvatures}
    Let $\Sigma$ be a regular surface in $G$.
    Away from the characteristic set $\mathcal{C}(\Sigma)$:
    \begin{enumerate}
        \item the \textit{horizontal mean curvature} $H^h_\Sigma$ of $\Sigma$ is given by
    \begin{equation}\label{HMC}
        H^h_\Sigma:=\lim_{\epsilon\to 0^+}H^\epsilon_\Sigma=X\left( \frac{Xu}{\|\nabla_H u\|}\right)+Y\left( \frac{Yu}{\|\nabla_H u\|} \right) +\frac{a_3Yu-b_3Xu}{\|\nabla_H u\|};
    \end{equation}
    \item the \textit{horizontal Gaussian curvature} $K^h_\Sigma$ of $\Sigma$ is given by
    \begin{equation}\label{HGC}
    K_\Sigma^h:=\lim_{\epsilon\to0^+}K^\epsilon_\Sigma=E_1\left(c\frac{Tu}{\|\nabla_Hu\|}\right)-\left(c\frac{Tu}{\|\nabla_Hu\|}\right)^2.
    \end{equation}
    \end{enumerate}
\end{defn}
\subsubsection{Invariance}\label{sec:curv-invariance} The horizontal mean curvature $H^h_\Sigma$  of a hypersurface $\Sigma$, with defining function $u=0$, depends only on the  horizontal derivatives  of $u$, that is, it remains invariant under the action of ${\rm Isom}_{CC}(G)$. We show in the next proposition that the same holds for the horizontal Gauss curvature; we state a definition first:
 \begin{defn}\label{QS}
    Let $\Sigma$ be a regular surface in $G$ and let us consider $Q^H=X(\overline{q})-Y(\overline{p})$, the {\it symplectic distortion} of $\Sigma$ is defined as   
    $$
    Q_\Sigma^h=Q^H-a_3\overline{p}-b_3\overline{q}.
$$ 
\end{defn}
It is evident from the definition that $Q_\Sigma^h$ is invariant under the action of ${\rm Isom}_{CC}(G)$.
The following holds.
\begin{prop}{\bf (Horizontal Theorema Egregium.)} 
    The horizontal Gauss curvature $K_\Sigma^h$ of a hypersurface $\Sigma$ embedded in the  group $G$ defined by $u=0$ where $u$ is a $\mathcal{C}^2$ function, depends only on the horizontal derivatives of $u$. In particular,
    \begin{equation}\label{K-Q}
   K_\Sigma^h=E_1(Q_\Sigma^h)-E_1E_1(\log \|\nabla_Hu\|)-\left(Q_\Sigma^h-E_1(\log \|\nabla_Hu\|\right)^2,        \end{equation}
   where $Q_\Sigma^h$ is the symplectic distortion of
   $\Sigma$.
\end{prop}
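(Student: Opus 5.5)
The plan is to reduce \eqref{K-Q} to a single pointwise identity relating $c\,Tu/\|\nabla_H u\|$ to horizontal quantities. Once that identity is in hand, I substitute it into the definition \eqref{HGC} of $K^h_\Sigma$. Concretely, I aim to show that on $\Sigma\setminus\mathcal{C}(\Sigma)$
\begin{equation*}
c\,\frac{Tu}{\|\nabla_H u\|}=Q_\Sigma^h-E_1(\log\|\nabla_H u\|),
\end{equation*}
where $E_1=-\overline{q}\,X+\overline{p}\,Y$ is the vector field of Definition \ref{E_1, E_2}. Note that $E_1$ is horizontal and independent of $\epsilon$. Plugging this into $K^h_\Sigma=E_1(cTu/l)-(cTu/l)^2$ gives exactly $E_1(Q^h_\Sigma)-E_1E_1(\log l)-(Q^h_\Sigma-E_1(\log l))^2$. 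Since $E_1$, $Q^h_\Sigma$ and $l$ involve only $Xu$, $Yu$ and their horizontal derivatives, this also yields the claim that $K^h_\Sigma$ depends only on horizontal derivatives of $u$.

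To prove the identity, I apply the bracket relation of Proposition \ref{Prop: abc}, $[X,Y]=a_3X+b_3Y+cT$, to $u$. With $p=Xu$, $q=Yu$ this gives
\begin{equation*}
X(q)-Y(p)=a_3\,p+b_3\,q+c\,Tu.
\end{equation*}
Next I write $p=l\,\overline{p}$ and $q=l\,\overline{q}$ and expand by the Leibniz rule:
\begin{equation*}
X(q)-Y(p)=l\,\bigl(X(\overline{q})-Y(\overline{p})\bigr)+\overline{q}\,X(l)-\overline{p}\,Y(l)=l\,Q^H-E_1(l).
\end{equation*}
The second equality uses $E_1(l)=-\overline{q}X(l)+\overline{p}Y(l)$. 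Dividing by $l$ gives
\begin{equation*}
c\,\frac{Tu}{l}=Q^H-a_3\overline{p}-b_3\overline{q}-E_1(\log l),
\end{equation*}
and by Definition \ref{QS} the right-hand side is $Q^h_\Sigma-E_1(\log l)$, as required.

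The computation itself is short; the only point needing care is regularity. With $u$ merely $C^2$, the term $E_1E_1(\log l)$ formally involves third derivatives of $u$. Two ingredients handle this. First, the formula \eqref{HGC} already requires $E_1(Tu/l)$, which has the same order. Second, the combination $E_1(Q^h_\Sigma)-E_1E_1(\log l)$ is, by the identity above, precisely $E_1(cTu/l)$. So I would either assume the extra smoothness implicit in \eqref{HGC}, or interpret \eqref{K-Q} as an equality of these combined expressions. Beyond that, I would double-check the sign conventions in $E_1$ and in $Q^H$ so that the $E_1(\log l)$ term appears with the stated sign.
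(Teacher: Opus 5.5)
Your proposal is correct and is essentially the paper's own proof: the paper also applies $[X,Y]=a_3X+b_3Y+cT$ to $u$ and expands $X(l\overline{q})-Y(l\overline{p})=lQ^H-E_1(l)$. This gives $c\,Tu/\|\nabla_Hu\|=Q_\Sigma^h-E_1(\log\|\nabla_Hu\|)$, which the paper substitutes into \eqref{HGC}. Your remark that $E_1E_1(\log\|\nabla_Hu\|)$ involves third derivatives of a merely $C^2$ function $u$ is a fair caveat, and the paper does not address it.
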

\begin{proof}
We have
\begin{eqnarray*}
 c\frac{Tu}{\|\nabla_Hu\|}&=&\frac{[X,Y]u-a_3Xu-b_3Yu}{\|\nabla_Hu\|}  \\
 &=&\frac{[X,Y]u}{\|\nabla_Hu\|}-a_3\overline{p}-b_3\overline{q}.
\end{eqnarray*}
Let $l=\|\nabla_Hu\|$. Since
\begin{eqnarray*}
    [X,Y]u&=&X(Yu)-Y(Xu)\\
    &=&X(l\overline{q})-Y(l\overline{p})\\
    &=&-E_1(l)+l(X(\overline{q})-Y(\overline{p}))\\
    &=&-E_1(l)+lQ^H,
\end{eqnarray*}
we conclude that
\begin{equation}\label{Q-aux}
   c\frac{Tu}{\|\nabla_Hu\|}=Q^H-a_3\overline{p}-b_3\overline{q}-E_1(\log \|\nabla_Hu\|).
   \end{equation}
Formula (\ref{K-Q}) follows.
    \end{proof}
    \begin{cor}\label{cor-cr-K}
    The horizontal Gauss curvature $K_\Sigma^h$ of a surface $\Sigma$ of $G$ remains invariant under the action of ${\rm Isom}_{CC}(G)$.
    \end{cor}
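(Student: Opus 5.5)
The plan is to reduce the statement to the Horizontal Theorema Egregium. By \eqref{K-Q}, $K_\Sigma^h$ is a polynomial expression in $Q_\Sigma^h$, in $\log\|\nabla_H u\|$, and in derivatives of these along $E_1=-\overline{q}X+\overline{p}Y$. So it is enough to show that each of these three ingredients transforms correctly under an element $f\in{\rm Isom}_{CC}(G)$. Concretely, fix $f$ and the transformed surface $f(\Sigma)$. We may take $\tilde u=u\circ f^{-1}$ as its defining function, since $\tilde u$ is again $C^2$ with surjective differential along $f(\Sigma)$. The goal is then to prove that $K^h_{f(\Sigma)}\circ f=K^h_\Sigma$.

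\textbf{Step 1: the differential of $f$ on $\calH$.} A CC-isometry is smooth (Theorem 1.3 of \cite{DO-I}). It maps horizontal curves to horizontal curves while preserving horizontal length, and from this one expects $Df(\calH)=\calH$ with $Df|_{\calH}$ a linear isometry of $(\calH,g)$ at each point. In the frame $\{X,Y\}$ we would therefore write $Df X=\cos\phi\,X+\sigma\sin\phi\,Y$ and $Df Y=-\sin\phi\,X+\sigma\cos\phi\,Y$, where $\sigma=\pm1$ and $\phi$ is a smooth function.

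\textbf{Step 2: transform the horizontal data.} From Step 1 it follows that $\nabla_H\tilde u\circ f$ is the rotated or reflected copy of $\nabla_H u$. Hence $\|\nabla_H\tilde u\|\circ f=\|\nabla_H u\|$, and $(\overline{p},\overline{q})$ is rotated accordingly. Because $E_1=J\,n_\Sigma$ restricted to the horizontal part, we get $Df(E_1)=\sigma\tilde E_1$. Consequently $\tilde E_1$ acting on a transported function equals $\sigma E_1$ acting on the original function.

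\textbf{Step 3: the symplectic distortion.} We need $Q^h_{f(\Sigma)}\circ f=\sigma Q^h_\Sigma$. This is the step I expect to be the main obstacle. The paper only asserts that $Q^h_\Sigma$ is "evident" to be invariant, but $Q^H$ contains the derivatives $X(\overline{q})$ and $Y(\overline{p})$, so the rotation angle $\phi$ gets differentiated. The extra terms should be $X\phi\,\overline{p}+Y\phi\,\overline{q}$, up to sign. These are exactly cancelled by the change of $a_3\overline{p}+b_3\overline{q}$, because $f$ preserves $\calH$ and therefore intertwines the $\calH$-component of $[X,Y]$. To verify this I would write $Q^h_\Sigma=-\operatorname{div}_H(J\nu)$ with $\nu=\overline{p}X+\overline{q}Y$, where the divergence is taken with respect to the natural volume. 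One should check that $X(\overline{q})-Y(\overline{p})-a_3\overline{p}-b_3\overline{q}$ equals $-d\alpha(X,Y)$ for the horizontal 1-form $\alpha=\overline{q}\omega_2\ldots$; more precisely it is $(d\beta)(X,Y)$ with $\beta=\overline{p}\omega_1+\overline{q}\omega_2$. Indeed, by \eqref{domegaXY}, $d\beta(X,Y)=X\overline{q}-Y\overline{p}-\beta([X,Y])=Q^H-a_3\overline{p}-b_3\overline{q}$. This rewriting is the key: $\beta$ is the horizontal unit conormal $du|_{\calH}/\|\nabla_H u\|$, extended by zero on $T$. Both $\beta$ and the bivector $X\wedge Y$ are natural under $f$: pullback commutes with $d$, $f^*\tilde\beta=\beta$, and $Df(X\wedge Y)=\sigma\,\tilde X\wedge\tilde Y$ modulo the $T$ direction, which $d\beta$ does not see only after a check. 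That last point is where I would be careful. $d\beta(\cdot,T)$ need not vanish, so I would use the fact that $Df$ preserves $\calH$ exactly, which gives $Df(X\wedge Y)=\sigma\,X\wedge Y$ at the image point with no $T$ component.

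\textbf{Step 4: conclude.} Substituting Steps 2 and 3 into \eqref{K-Q}, each term transforms as follows:
\begin{itemize}
\item $E_1(Q^h_\Sigma)$ picks up the factor $\sigma^2=1$;
\item $E_1E_1\log\|\nabla_H u\|$ picks up $\sigma^2=1$;
\item $Q^h_\Sigma-E_1\log\|\nabla_H u\|$ picks up $\sigma$, and it appears squared.
\end{itemize}
Hence $K^h_{f(\Sigma)}\circ f=K^h_\Sigma$, which proves the corollary.
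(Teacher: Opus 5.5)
Your route is the paper's. The corollary is read off from \eqref{K-Q}, and your identity $Q^h_\Sigma=d\beta(X,Y)$, with $\beta=\overline{p}\,\omega_1+\overline{q}\,\omega_2$, is a correct way to make precise what the paper calls evident. One step, however, does not follow from what you have established: the naturality $f^*\tilde\beta=\beta$. Since $\beta$ is defined by $\beta(T)=0$, this requires $\tilde\beta(Df\,T)=0$, i.e.\ $Df(T)\in\mathbb{R}T$. Step 1 only gives $Df(\calH)=\calH$ isometrically.

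This is not cosmetic. $f^*\tilde\beta$ and $\beta$ agree on $\calH$, so $f^*\tilde\beta=\beta+\mu\vartheta$ with $\mu=\tilde\beta(Df\,T)$. Since $d(\mu\vartheta)(X,Y)=-c\mu$, your computation would only give $Q^h_\Sigma=\sigma\,Q^h_{f(\Sigma)}\circ f+c\mu$. You guarded against the $T$-direction for the bivector $X\wedge Y$, but the form $\beta$ sees $T$ through the term $-c\,\beta(T)$ in $d\beta(X,Y)$. The same hidden assumption sits in your heuristic that $f$ intertwines the $\calH$-component of $[X,Y]$. Indeed, the horizontal component of $[Df X,Df Y]=Df[X,Y]=a_3\,Df X+b_3\,Df Y+c\,Df\,T$ equals $a_3\,Df X+b_3\,Df Y$ exactly when $Df\,T$ has no horizontal component.

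The missing fact is true and quick to prove.
\begin{itemize}
\item Since $Df$ preserves $\ker\vartheta$, we have $f^*\vartheta=\lambda\vartheta$ for some nonvanishing function $\lambda$.
\item Evaluate $f^*d\vartheta=d\lambda\wedge\vartheta+\lambda\,d\vartheta$ on $(X,Y)$. The left side is $d\vartheta(Df X,Df Y)=\sigma\,d\vartheta(X,Y)=-\sigma c$, and the right side is $-\lambda c$. Hence $\lambda=\sigma$; here it matters that $c$ is a nonzero constant.
\item Therefore $f^*d\vartheta=\sigma\,d\vartheta$, so $d\vartheta(Df\,T,\cdot)=0$, i.e.\ $Df\,T\in\ker d\vartheta=\mathbb{R}T$.
\item Finally, $\vartheta(Df\,T)=\sigma$ gives $Df(T)=\sigma T$.
\end{itemize}
With this inserted, your argument closes.

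Note that once $Df(T)=\sigma T$ is known, the detour through $Q^h_\Sigma$ is unnecessary. We get $T\tilde u\circ f=\sigma\,Tu$, so $c\,Tu/\|\nabla_Hu\|$ picks up the factor $\sigma$, as does $E_1$, and \eqref{HGC} is invariant directly. Equivalently, $f$ is then an isometry of every $g_\epsilon$, so $K^\epsilon_\Sigma$ is already invariant before passing to the limit.
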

\begin{prop}\label{prop-K-Q-ac}
    If $\Sigma$ is a regular surface in $G$ that satisfies $Tu=1$, then the following holds:
    \begin{equation}
        K_\Sigma^h=-\frac{c}{\|\nabla_Hu\|}Q_\Sigma^h.
    \end{equation}
\end{prop}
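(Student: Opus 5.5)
The plan is to work directly from the defining formula \eqref{HGC} for $K_\Sigma^h$ together with the identity \eqref{Q-aux} obtained in the proof of the Horizontal Theorema Egregium. Both express things in terms of the quantity $c\,Tu/\|\nabla_Hu\|$. Under the hypothesis $Tu=1$ this quantity reduces to $c/l$, where $l=\|\nabla_Hu\|$. Recall that $c$ is a nonzero constant by Proposition \ref{Prop: abc}.

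First, substituting $Tu=1$ into \eqref{HGC} and using that $E_1$ is a derivation annihilating constants, I get
\begin{equation*}
K_\Sigma^h = E_1\!\left(\frac{c}{l}\right)-\frac{c^2}{l^2} = -\frac{c\,E_1(l)}{l^2}-\frac{c^2}{l^2} = -\frac{c}{l^2}\bigl(c+E_1(l)\bigr).
\end{equation*}

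Second, I rewrite \eqref{Q-aux} with $Tu=1$. By Definition \ref{QS}, $Q_\Sigma^h = Q^H-a_3\overline{p}-b_3\overline{q}$, so \eqref{Q-aux} reads
\begin{equation*}
\frac{c}{l}=Q_\Sigma^h - E_1(\log l) = Q_\Sigma^h-\frac{E_1(l)}{l}.
\end{equation*}
Multiplying by $l$ gives $l\,Q_\Sigma^h = c+E_1(l)$. Substituting this into the first display yields $K_\Sigma^h = -\frac{c}{l^2}\, l\, Q_\Sigma^h = -\frac{c}{\|\nabla_Hu\|}Q_\Sigma^h$, which is the claim.

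There is no real obstacle. The only point to check is that all manipulations take place on $\Sigma\setminus\mathcal{C}(\Sigma)$, where $l>0$, so that dividing by $l$ and writing $\log l$ are legitimate. Alternatively, one can plug $c/l = Q_\Sigma^h-E_1(\log l)$ into formula \eqref{K-Q} and simplify. This route gives the same result, since the terms $E_1E_1(\log l)$ cancel against $E_1(c/l)$.
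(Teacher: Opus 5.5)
Your proposal is correct and follows essentially the same route as the paper. You set $Tu=1$ in \eqref{HGC} to get $K_\Sigma^h=-\frac{c}{l^2}\bigl(E_1(l)+c\bigr)$, read off $Q_\Sigma^h=\frac{1}{l}\bigl(E_1(l)+c\bigr)$ from \eqref{Q-aux}, and combine the two, which is exactly the paper's argument.
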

\begin{proof}
Let $\Sigma$ be a regular surface in $G$ so that $Tu\equiv 1$.
Then formula \eqref{HGC} reads
$$
K_\Sigma^h=-\frac{c}{\|\nabla_Hu\|^2}\left(E_1(\|\nabla_Hu\|)+c\right),
$$
and on the other hand,
Eq. \eqref{Q-aux} gives 
$$
Q_\Sigma^h=\frac{1}{\|\nabla_Hu\|}\left(E_1(\|\nabla_Hu\|)+c\right).
$$
The result follows.
\end{proof}
\begin{cor}\label{ZeroQ-K-Cor}
    A regular surface $\Sigma$ in $G$ satisfying $Tu=1$ has zero horizontal Gaussian curvature if and only if one the following hold: a) it has zero symplectic distortion, and b) $E_1(\|\nabla_Hu\|)=-c$.
\end{cor}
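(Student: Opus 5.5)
This follows directly from Proposition \ref{prop-K-Q-ac}. Under the hypothesis $Tu\equiv 1$ that proposition gives
\[
K_\Sigma^h=-\frac{c}{\|\nabla_Hu\|}\,Q_\Sigma^h
\]
away from $\mathcal{C}(\Sigma)$. By Proposition \ref{Prop: abc} we have $c\neq 0$, and off the characteristic set $\|\nabla_Hu\|>0$. So the prefactor $-c/\|\nabla_Hu\|$ never vanishes, and $K_\Sigma^h=0$ holds exactly when $Q_\Sigma^h=0$. This settles alternative a).

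For alternative b), I will use the second identity from the proof of Proposition \ref{prop-K-Q-ac}. This is \eqref{Q-aux} specialised to $Tu=1$:
\[
Q_\Sigma^h=\frac{1}{\|\nabla_Hu\|}\bigl(E_1(\|\nabla_Hu\|)+c\bigr).
\]
It shows that $Q_\Sigma^h=0$ is equivalent to $E_1(\|\nabla_Hu\|)=-c$. One can also see this directly from the other formula in that proof,
\[
K_\Sigma^h=-\frac{c}{\|\nabla_Hu\|^2}\bigl(E_1(\|\nabla_Hu\|)+c\bigr).
\]
Hence the two listed conditions are equivalent to each other, and each is equivalent to $K_\Sigma^h=0$.

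I would read the statement's ``one of the following hold'' as saying that each of a) and b) characterises flatness, and I would note in the write-up that a) and b) are in fact equivalent. All of this is taken pointwise on $\Sigma\setminus\mathcal{C}(\Sigma)$, where the curvatures are defined.

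There is no real obstacle here. The only care needed is to check that the derivation in Proposition \ref{prop-K-Q-ac} is legitimate. When $Tu\equiv 1$ on a neighbourhood, $E_1(Tu)=0$, so in \eqref{HGC} the term $E_1(cTu/\|\nabla_Hu\|)$ reduces to $-c\,E_1(\|\nabla_Hu\|)/\|\nabla_Hu\|^2$. This is what produces the factored form above.
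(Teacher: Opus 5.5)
Your proposal is correct and follows the paper's route: the paper states this corollary without a separate proof, reading it off from $K_\Sigma^h=-\frac{c}{\|\nabla_Hu\|}Q_\Sigma^h$ and $Q_\Sigma^h=\frac{1}{\|\nabla_Hu\|}\bigl(E_1(\|\nabla_Hu\|)+c\bigr)$ in Proposition~\ref{prop-K-Q-ac} and its proof, with $c\neq 0$. A minor point: $E_1$ is tangent to $\Sigma$, so $Tu\equiv 1$ on $\Sigma$ alone already gives $E_1(Tu)=0$ there, and you do not need $Tu\equiv 1$ on a neighbourhood.
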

\begin{rem}
    Note that a surface satisfying $Tu\neq 0$ can be normalised so that $Tu=1.$
\end{rem}
\subsubsection{Horizontal geodesic curvature and horizontal normal curvature}\label{sec:hor-geod}
\begin{defn}
    The {\it horizontal geodesic curvature} $k_g^h$ and the {\it horizontal normal curvature} $k_n^h$ of a curve $\gamma$ on $\Sigma$ are defined respectively by
    \begin{eqnarray*}
        k_g^h &=&\lim_{\epsilon \to 0^+} \eta^2_1 (\dot{\gamma}),\\
        k_n^h &=& -\lim_{\epsilon \to 0^+} \eta^3_1 (\dot{\gamma}).
    \end{eqnarray*}
\end{defn}
\begin{prop}\label{k-H}
Let $\gamma$ be an integral curve of $E_1$ on a regular surface $\Sigma$ in $G$. Then at non-characteristic points we have 
 $$
        k_g^h =0 \,\,\text{and}\,\,
        k_n^h  = H_\Sigma^h,
$$
   along $\gamma$.
\end{prop}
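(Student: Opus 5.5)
Since $\gamma$ is an integral curve of $E_1$, we have $\dot\gamma=E_1$ along $\gamma$, and the definitions reduce to
\[
k_g^h=\lim_{\epsilon\to0^+}\eta_1^2(E_1),\qquad k_n^h=-\lim_{\epsilon\to0^+}\eta_1^3(E_1).
\]
The plan is to read off these two values from \eqref{conn 1-forms} and then pass to the limit using the explicit coefficients \eqref{ABC} together with Lemma \ref{l,r limits}. Note that $E_1=-\overline{q}X+\overline{p}Y$ does not depend on $\epsilon$, so the curve $\gamma$ is the same for every $\epsilon$ and only the coefficients vary.

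\emph{Step 1 (reading off).} By Proposition \ref{Cartan I SE} and duality $\alpha_i(E_j)=\delta_{ij}$, $\alpha_\Sigma(E_1)=0$, we get $\eta_1^2(E_1)=A_3$ and $\eta_1^3(E_1)=-A_1$. Hence we need $k_g^h=\lim A_3$ and $k_n^h=\lim A_1$.

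\emph{Step 2 (limit of $A_1$).} From \eqref{ABC},
\[
A_1=(l/l_\epsilon)\bigl(H^H+a_3\overline{q}-b_3\overline{p}+\epsilon\overline{r}(b_2\overline{p}^2+a_1\overline{q}^2-(a_2+b_1)\overline{pq})\bigr).
\]
Here $\overline{r}=T_\epsilon u/l=\epsilon\,Tu/l$, so the $\epsilon\overline r$ term is $O(\epsilon^2)$ at non-characteristic points. Moreover $l/l_\epsilon=(1+\overline r^2)^{-1/2}\to1$ by Lemma \ref{l,r limits}. Thus
\[
A_1\to H^H+a_3\overline{q}-b_3\overline{p},
\]
which is exactly \eqref{HMC}. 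This gives $k_n^h=H^h_\Sigma$; equivalently, it matches the limit of $H^\epsilon_\Sigma=A_1+B_2$, provided one knows $B_2\to0$, but the direct computation above avoids needing that.

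\emph{Step 3 (limit of $A_3$).} From \eqref{ABC},
\[
A_3=(l/l_\epsilon)\,\overline r\bigl(-(H^H+a_3\overline q-b_3\overline p)+\epsilon(\cdots)\bigr).
\]
The bracket stays bounded as $\epsilon\to0$ (it involves only horizontal derivatives of $u$ and the constants), while $\overline r=\epsilon Tu/l\to0$ away from $\mathcal C(\Sigma)$. Hence $A_3\to0$ and $k_g^h=0$.

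The main point requiring care, though it is not a serious obstacle, is the bookkeeping of the factor $\overline r$: it is defined via $r=T_\epsilon u=\epsilon Tu$, so it carries a hidden factor $\epsilon$. One must also use that $l=\|\nabla_Hu\|$ is bounded away from zero locally at non-characteristic points, so that all terms converge pointwise along $\gamma$.
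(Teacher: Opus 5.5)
Your proposal is correct and is essentially the paper's proof: evaluate \eqref{conn 1-forms} on $\dot\gamma=E_1$ to get $\eta_1^2(E_1)=A_3$ and $-\eta_1^3(E_1)=A_1$, then let $\epsilon\to0^+$ in \eqref{ABC}. You also make explicit what the paper leaves implicit, namely that $\overline{r}=\epsilon\,Tu/l\to0$ and $l/l_\epsilon\to1$ at non-characteristic points, and that $E_1$ (hence $\gamma$) does not depend on $\epsilon$.
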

\begin{proof}
Let $\gamma$ be an integral curve of $E_1$ regular surface $\Sigma$ in $G$ away from characteristic points, then
    \begin{eqnarray*}
    \eta^2_1(\dot\gamma)&=&A_3\to 0,\,\text{as}\,\epsilon\to 0^+,\\
    -\eta^3_1(\dot\gamma)&=&A_1\to H^H+a_3\overline{q}-b_3\overline{p}=H_\Sigma^h,\,\,\text{as}\,\,\epsilon\to 0^+.
\end{eqnarray*}
\end{proof}

\subsubsection{$G$-cylindrical surfaces}\label{sec:G-cyl}
\begin{defn}
   Let $\Sigma$ be a regular surface given by $u=0$. It shall be called {\it $G$-cylindrical} if $Tu\equiv 0$.
\end{defn}
It is clear that when $Tu=0$ then $K_\Sigma^h=0$. 
\section{Applications: two characteristic examples}\label{sec-appl}
In this section, we apply our previous results to two characteristic examples of contact Lie groups with a sub-Riemannian structure. The first one is the Heisenberg group, whose general features we describe in Section \ref{sec:ex-Heis}. In Section \ref{sec-Hsurfaces}, we consider characteristic examples of surfaces embedded in the Heisenberg group. In particular, for surfaces of revolution (see Section \ref{sec-Hsurfrev}) we give a complete classification surfaces with constant horizontal Gauss curvature (Theorem \ref{thm:H-constK}, compare to Section 13 in \cite{Vel-RSCC1}), constant horizontal mean curvature (Theorem \ref{thm:H-constH}, compare to Theorem 5.4 in \cite{RR-Rot.inv.}) and constant symplectic distortion (Theorem \ref{thm:H-constQ}). Next, is the example of the affine-additive group $\Aa$, see Section \ref{Sec:ex-AA}. Examples of surfaces embedded in $\Aa$ may be found in Section \ref{Sec:AA-surf}. In Section \ref{sec-AAsurfrev} we define surfaces of revolution in $\Aa$ and in a manner analogous with the one used in the example of the Heisenberg group we classify completely surfaces of revolution with a) constant horizontal Gauss curvature (Theorem \ref{thm:AA-constK}), b) constant horizontal mean curvature (Theorem \ref{thm:AA-constH}) and c) constant symplectic distortion (Theorem \ref{thm:AA-constQ}).
\subsection{Heisenberg group}\label{sec:ex-Heis}
The \textit{Heisenberg group} $\mathbb{H}$ is the Lie group with underlying space $\R^3$ with coordinates $(x,y,t)$ and group law which is defined as follows: for $p=(x,y,t)$ and $p'=(x',y',t')$,
$$
p\cdot p'=(x+x',\,y+y',\,t+t'+2(yx'-xy')).
$$
An orthonormal basis for the tangent bundle $\rm{T}\,\mathbb H$ comprises the left-invariant vector fields
$$
X=\partial_x+2y\partial_t,\quad Y=\partial_y-2x\partial_t,\quad T=\partial_t.
$$
The only non-zero Lie bracket relation is
$
[X,Y]=-4T
$
and, following the notation from Proposition \ref{Prop: abc}, we have
$
a_i=b_i=0,$ $i\in\{1,2,3\},$ and $c=-4.$
The contact form is 
$$
\theta=dt+2x\,dy-2y\,dx,
$$
so that $X,Y\in\ker\vartheta$ and $T$ is the Reeb field. The corresponding coframe is
$$
\omega_1=dx,\quad \omega_2=dy,\quad \vartheta.
$$
The sectional curvatures of characteristic planes are calculated by using \eqref{SecCurvXY},\eqref{SecCurvXT} and \eqref{SecCurvYT}: 
\begin{eqnarray*}
    K_\epsilon(X,Y)=-\frac{12}{\epsilon^2},\quad K_\epsilon(X,T_\epsilon)=K_\epsilon(Y,T_\epsilon)=\frac{4}{\epsilon^2}.
\end{eqnarray*}
Moreover, according to Theorem \ref{Thm:Isom} we obtain
\begin{equation*}\label{eq: Isom Heis}
    {\rm Isom}_{CC}(\mathbb H) = \mathbb H \rtimes {\rm AutIsom}_{CC}(\mathbb H)\,.
\end{equation*}
In particular, the group ${\rm Isom}_{CC}(\mathbb{H})$ is generated by the following transformations:
\begin{enumerate}
    \item left translations $L_{(x_0,y_0,t_0)}:\mathbb{H}\to\mathbb{H}$ with respect to $(x_0,y_0,t_0)\in\mathbb H$, given by $$ L_{(x_0,y_0,t_0)}(x,y,t)= (x_0,y_0,t_0)\cdot(x,y,t);$$
    \item rotations $R_\theta:\mathbb{H}\to\mathbb{H}$ in an angle $\theta$ around the $t$-axis, given by $$R_\theta(x,y,t)= (x\cos\theta-y\sin\theta,\,x\sin\theta+y\cos\theta,\,t);$$
\item conjugation $j:\mathbb{H}\to\mathbb{H}$, given by
$$j(x,y,t)= (x,-y, -t).$$
\end{enumerate}
We also mention dilations $D_\delta:\mathbb{H}\to\mathbb{H}$ of factor $\delta>0$, given by
$$
D_\delta(x,y,t)=(\delta x,\delta y,\delta^2t).
$$
A dilation $D_\delta$ is an automorphism of $\mathbb{H}$ that scales the $CC$-distance up to the factor $\delta$, i.e. for all $p,q\in\mathbb{H}$,
$$
d_{CC}(D_\delta(p),D_\delta(q))=\delta\,d_{CC}(p,q).
$$
\subsubsection{Surfaces embedded in the Heisenberg group}\label{sec-Hsurfaces}
Let $\Sigma$ be a regular hypersurface defined by the equation $u=0$. Then the horizontal Gauss curvature $K_\Sigma^h$ of  $\Sigma$ is given by
    \begin{equation}\label{KH-Heis}
    K_\Sigma^h=-4 E_1\left(\frac{Tu}{\|\nabla_Hu\|}\right)-16\left(\frac{Tu}{\|\nabla_Hu\|}\right)^2,
    \end{equation} 
    the horizontal mean curvature $H_\Sigma^h$ of  $\Sigma $ is given by
    \begin{equation}\label{HH-Heis}
     H^h_\Sigma=X\left(\frac{Xu}{\|\nabla_Hu\|}\right)+ Y\left(\frac{Yu}{\|\nabla_Hu\|}\right),
    \end{equation}
    and the symplectic distortion $Q_\Sigma^h$ of  $\Sigma $ is given by
    \begin{equation}\label{SD-Heis}
     Q^h_\Sigma=X\left(\frac{Yu}{\|\nabla_Hu\|}\right)- Y\left(\frac{Xu}{\|\nabla_Hu\|}\right). 
    \end{equation}
    We have seen that horizontal Gauss curvature, horizontal mean curvature and symplectic distortion are all invariant under the action of isometries. In the case of the Heisenberg group, we have the following proposition concerning the action of dilations on an embedded surface.
\begin{prop}\label{K-HHinvariance}
Let $\Sigma$ be a regular surface in $\mH$ defined by the equation $\Sigma=\{p\in\mH:u(p)=0\}$ where $u:\mH\to \R$ is a $C^2$-smooth function. Let $D_\delta:\mH\to\mH$ be the dilation given by $$D_\delta(x,y,t)=(\delta x,\delta y,\delta ^2 t),$$ for each $(x,y,t)\in\mH$. Suppose that the surface $\Sigma'$ of $\mH$ is defined by the equation $\{p'\in\mH:u'(p')=0\}$, where 
$u'(p')=u(D_{1/\delta}(p'))=u(p)$. Then for each $p\in\Sigma$,
$$
H_{\Sigma'}(D_\delta(p))=\frac{1}{\delta}H_\Sigma^h(p),\quad K_{\Sigma'}(D_\delta(p))=\frac{1}{\delta}K_\Sigma^h(p),\quad Q_{\Sigma'}(D_\delta(p))=\frac{1}{\delta}Q_\Sigma^h(p).
$$
   \end{prop}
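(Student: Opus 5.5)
The plan is to reduce all three claims to one transformation rule for the left-invariant frame under $D_\delta$, and then read off the homogeneity of each curvature formula in \eqref{HH-Heis}, \eqref{SD-Heis} and \eqref{KH-Heis}.

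\textbf{Step 1: the frame under $D_\delta$.} Write $p'=D_\delta(p)$ and $u'=u\circ D_{1/\delta}$. A direct chain-rule computation with $X=\partial_x+2y\partial_t$, $Y=\partial_y-2x\partial_t$, $T=\partial_t$ gives
\[
(Xu')(p')=\tfrac{1}{\delta}(Xu)(p),\qquad (Yu')(p')=\tfrac{1}{\delta}(Yu)(p),\qquad (Tu')(p')=\tfrac{1}{\delta^2}(Tu)(p).
\]
More generally, for any function $f$ on $\Sigma$ and $f'=f\circ D_{1/\delta}$, we have $(Xf')(p')=\delta^{-1}(Xf)(p)$, and likewise for $Y$. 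In other words, $(D_\delta)_*X=\delta X$ and $(D_\delta)_*Y=\delta Y$. Consequences:
\begin{itemize}
\item $\|\nabla_H u'\|(p')=\delta^{-1}\|\nabla_Hu\|(p)$, so $\overline p$ and $\overline q$ are invariant, i.e. $\overline{p}'(p')=\overline p(p)$ and $\overline{q}'(p')=\overline q(p)$;
\item $E_1'=-\overline q'X+\overline p'Y$ satisfies $(E_1'f')(p')=\delta^{-1}(E_1f)(p)$;
\item $\log\|\nabla_Hu'\|$ differs from the transported $\log\|\nabla_Hu\|$ only by the constant $-\log\delta$, which is killed by $E_1$.
\end{itemize}

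\textbf{Step 2: $H$ and $Q$.} Since $a_i=b_i=0$ in $\mathbb H$, formulas \eqref{HH-Heis} and \eqref{SD-Heis} apply one horizontal derivative to the dilation-invariant functions $\overline p,\overline q$. Step 1 then immediately gives $H_{\Sigma'}(p')=\delta^{-1}H^h_\Sigma(p)$ and $Q_{\Sigma'}(p')=\delta^{-1}Q^h_\Sigma(p)$.

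\textbf{Step 3: $K$.} I would use the Horizontal Theorema Egregium \eqref{K-Q}, which expresses $K^h_\Sigma$ through $Q^h_\Sigma$, $E_1$ and $\log\|\nabla_Hu\|$ only. Substitute the rules of Step 1 term by term:
\begin{itemize}
\item $E_1(Q)$ picks up one factor $\delta^{-1}$ from $Q$ and one from $E_1$;
\item $E_1E_1(\log\|\nabla_Hu\|)$ picks up $\delta^{-1}$ from each $E_1$, the additive constant dropping out;
\item $(Q-E_1\log\|\nabla_Hu\|)^2$ is the square of a quantity scaling like $\delta^{-1}$.
\end{itemize}
As a cross-check I would compute the same directly from \eqref{KH-Heis}, where $Tu/\|\nabla_Hu\|$ scales like $\delta^{-2}/\delta^{-1}=\delta^{-1}$.

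\textbf{Main obstacle.} Step 3 is where I expect the difficulty. The bookkeeping above assigns every term of \eqref{K-Q} and of \eqref{KH-Heis} total homogeneity $-2$ in $\delta$. This is consistent with $K$ being a two-derivative quantity while $H$ and $Q$ are one-derivative ones. It does not match the factor $\delta^{-1}$ asserted for $K$ in the statement.

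So the crucial check is whether any normalisation hides an extra $\delta$: the choice of $u'$, the measure of the frame, or the convention for $K_{\Sigma'}$. I would first test the claim on an explicit example, such as a paraboloid $t=x^2+y^2$ in the coordinates of Section \ref{sec-Hsurfrev}, where $K^h_\Sigma$ is computable in closed form.

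If the example confirms the degree $-2$, then the statement as printed cannot be obtained for $K$. The argument of Steps 1--3 instead proves the $H$ and $Q$ parts as stated, together with $K_{\Sigma'}(D_\delta(p))=\delta^{-2}K^h_\Sigma(p)$. The printed factor $\frac{1}{\delta}$ for $K$ would then be a misprint for $\frac{1}{\delta^2}$.
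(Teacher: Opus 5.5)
Your Steps 1--2 are the paper's argument. The paper records $(D_\delta)_*X=\delta X$, $(D_\delta)_*Y=\delta Y$ and $(D_\delta)_*T=\delta^2T$, together with the resulting scalings of first and second derivatives of $u$. It then proves only the $H$ formula, by expanding \eqref{HH-Heis} into $\bigl(XXu\,(Yu)^2+YYu\,(Xu)^2-Xu\,Yu\,(XYu+YXu)\bigr)/\|\nabla_Hu\|^3$ and counting degrees ($-4$ over $-3$). Your observation that $\overline p,\overline q$ are dilation invariant and receive one horizontal derivative is the same count done more economically. It also covers $Q$ at no extra cost.

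For $K$ your bookkeeping is right and the statement is wrong. The paper only asserts that the other two formulas are ``derived in an analogous manner'' and never carries out the count for $K$. The count gives $K_{\Sigma'}(D_\delta(p))=\delta^{-2}K^h_\Sigma(p)$. In \eqref{KH-Heis}, $Tu'/\|\nabla_Hu'\|$ at $D_\delta(p)$ equals $\delta^{-1}\,Tu/\|\nabla_Hu\|$ at $p$, and $E_1'$ contributes a further $\delta^{-1}$, so both terms have degree $-2$. That computation is already a proof, so you need not make it conditional on testing an example.

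The paper's own worked examples confirm it:
\begin{itemize}
\item $\Pi_0=\{t=0\}$ is $D_\delta$-invariant with $K^h_{\Pi_0}=-2/(x^2+y^2)$, which becomes $-2/(\delta^2(x^2+y^2))$ at the dilated point.
\item Since $D_\delta S_R=S_{\delta R}$ and $D_\delta\mathcal B_R=\mathcal B_{\delta R}$, the formulas $K^h_{S_R}=(6r^4-2R^4)/(r^2R^4)$ and $K^h_{\mathcal B_R}=1/R^2-2/r^2$ are homogeneous of degree $-2$ under $(r,R)\mapsto(\delta r,\delta R)$.
\item By contrast, $H^h_{S_R}=3r/R^2$ and $H^h_{\mathcal B_R}=1/R$ have degree $-1$.
\end{itemize}

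So the factor $\frac{1}{\delta}$ for $K$ is a misprint for $\frac{1}{\delta^2}$, and your Steps 1--3 prove the corrected statement. The later use of the proposition, normalising $k\in\{-1,0,1\}$ in the proof of Theorem \ref{thm:H-constK}, is unaffected: it only needs $K$ to rescale by a positive factor.
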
 
\begin{proof}
We will prove only the first formula; the other two are derived in an analogous manner.
 Let $D_\delta$, $\delta>0$, and let the dilation given by
       $
       D_\delta(x,y,t)=(\delta x,\,\delta y,\,\delta^2t).
       $
       First, we have 
       $$
       (D_\delta)_{*}X=\delta X,\quad (D_\delta)_{*}Y=\delta Y,\quad (D_\delta)_{*}T=\delta^2 T,
       $$
       and therefore, for $p'=D_\delta(p)$ we have
       \begin{eqnarray*}
           &&
           Xu'(p')=\frac{1}{\delta} Xu(p),\quad Yu'(p')=\frac{1}{\delta} Yu(p), Tu'(p')=\frac{1}{\delta^2} Tu(p),\\
           &&
           \|\nabla_Hu'(p')\|=\frac{1}{\delta}\|\nabla_Hu(p)\|,\\
           &&
           XXu'(p')=\frac{1}{\delta^2}XXu(p),\quad
           XYu'(p')=\frac{1}{\delta^2}XYu(p),\\
           &&
           YXu'(p')=\frac{1}{\delta^2}YXu(p),\quad
           YYu'(p')=\frac{1}{\delta^2}YYu(p),\\
           &&
           XTu'(p')=\frac{1}{\delta^3}XTu(p),\quad
           YTu'(p')=\frac{1}{\delta^3}YTu(p).
       \end{eqnarray*}
       By expanding formula (\ref{HH-Heis}) we obtain after straightforward calculations that
       \begin{eqnarray*}
           H_{\Sigma'}(p')&=&\frac{XXu'(p')Yu'(p')^2+YYu'(p')Xu'(p')^2-Xu'(p')Yu'(p')(XYu'(p')+YXu'(p'))}{\|\nabla_Hu'(p')\|^3}\\
           &=&\frac{1}{\delta}\frac{XXu(p)Yu(p)^2+YYu(p)Xu(p)^2-Xu(p)Yu(p)(XYu(p)+YXu(p))}{\|\nabla_Hu(p)\|^3}\\
           &=&\frac{1}{\delta}H_\Sigma(p).
       \end{eqnarray*} 
   \end{proof}
 \begin{ex}
     {\bf $\mathbb{H}$-Cylindrical surfaces.}
     An $\mathbb{H}$-cylindrical surface $\Sigma$ is given by an equation of the form $u(x,y,t)=g(x,y)=0$, since $Tu=\partial_tu=0$. We also have that $\Sigma$ is also invariant under the action of vertical translations. The horizontal Gauss curvature of $\mathbb{H}$-cylindrical surfaces is zero and moreover,
$$
Xu=g_x,\quad Yu=g_y,\quad \|\nabla_Hu\|=\sqrt{g_x^2+g_y^2}.
$$
Hence
$$
H_\Sigma^h=\frac{g_{xx}g_y^2+g_{yy}g_x^2-2g_{xy}g_xg_y}{(g_x^2+g_y^2)^{3/2}},
$$
and
$$
Q_\Sigma^h=\frac{g_xg_y(g_{yy}-g_{xx})+(g_x^2-g_y^2)g_{yx}}{(g_x^2+g_y^2)^{3/2}}.
$$
\begin{rem}
    In the particular case where $g(x,y)=Ax+By$, i.e., $\Sigma$ is a vertical plane, we have $K_\Sigma^h=H_\Sigma^h=Q_\Sigma^h=0.$ Also, in the case where $g(x,y)=x^2+y^2-R^2$, $H_\Sigma^h=\frac{1}{R}.$
    \end{rem}
 \end{ex} 
\subsubsection{Graphs}
Suppose that $u(x,y,t)=t-f(x,y)$; then we have
    $$ 
    Xu=2y-f_x,\quad Yu=-2x-f_y,\quad Tu=1.
    $$
    Using \eqref{KH-Heis}, \eqref{HH-Heis}  and \eqref{SD-Heis}, respectively, we have:
    \begin{eqnarray}
      \label{K-Heis} K_\Sigma^h&=&4\frac{(f_y+2x)(f_x-2y)(f_{xx}-f_{yy})+(f_y+2x)^2(f_{xy}-2)-(f_x-2y)^2(f_{xy}+2)}{((f_x-2y)^2+(f_y+2x)^2)^2}, \\ 
    \label{H-Heis}  H_\Sigma^h&=&
-\frac{(f_y+2x)^2f_{xx}-2(f_y+2x)(f_x-2y)f_{xy}+(f_x-2y)^2f_{yy}}{((f_x-2y)^2+(f_y+2x)^2)^{3/2}},\\
\label{Q-Heis} Q_\Sigma^h&=&
\frac{(f_y+2x)(f_x-2y)(f_{xx}-f_{yy})+(f_y+2x)^2(f_{yx}-2)-(f_x-2y)^2(f_{xy}+2)}{((f_x-2y)^2+(f_y+2x)^2)^{3/2}}. 
\end{eqnarray}
     \begin{ex}
        {\bf Planes.} Any plane in $\mH$ is an isometric image of either the plane $\Pi_0$ defined by $u(x,y,t)=t=0$ (if it is non-vertical), or the plane $\Pi_1$ defined by $u(x,y,t)=x=0$ (if it is vertical). In the first case we have 
        $$
        H_{\Pi_0}^h=0,\quad K_{\Pi_0}^h=-\frac{2}{x^2+y^2},\quad Q_{\Pi_0}^h=-\frac{1}{\sqrt{x^2+y^2}},
        $$
        whereas in the second case we have
        $$
        H_{\Pi_1}^h=K_{\Pi_1}^h=Q_{\Pi_1}^h=0.
        $$
    \end{ex}
   \begin{ex}
The graph $\Sigma$ of the quadratic surface $\Sigma$  defined by $t=2xy$ satisfies $K_\Sigma^h=H_\Sigma^h=Q_\Sigma^h=0$ and the same hold for all its isometric images.
   \end{ex}
  \subsubsection{Surfaces of revolution}\label{sec-Hsurfrev}
Surfaces of revolution $\Sigma$ are defined by
$$
u(x,y,t)=t-f(r),\quad r=\sqrt{x^2+y^2}.
$$
We have:
$$
Xu=-f'(r)\cos\theta+2r\sin\theta,\quad Yu=
f'(r)\sin\theta+2r\cos\theta,\quad Tu=1;
$$
hence the horizontal velocity is
$
\|\nabla_Hu\|=\sqrt{ f'(r)^2+4r^2}.
$
Observe that  characteristic points exist when $r=0$ and $\dot f(0)=0$, in other words, all such points must lie in the $t$-axis. 
We apply formulae (\ref{K-Heis}),  (\ref{H-Heis}) and(\ref{Q-Heis}), to obtain respectively,
\begin{eqnarray}
   \label{K-rev}
    K_\Sigma^h&=&\frac{4r\frac{d}{dr}(4r^2+ f'(r)^2)-16(4r^2+ f'(r)^2)}{(4r^2+ f'(r)^2)^2}, \\
  \label{H-rev}   H^h_\Sigma& =&-\frac{4r^3  f''(r)+ f'(r)^3}{r (4r^2+ f'(r)^2)^{\frac{3}{2}}},\\
 \label{Q-rev}  Q_\Sigma^h&=&\frac{r\frac{d}{dr}(4r^2+ f'(r)^2)-4(4r^2+ f'(r)^2)}{(4r^2+ f'(r)^2)^{3/2}}.
\end{eqnarray}
 \begin{ex}
    {\bf The Kor\'anyi sphere.}
    Let $S_R$ be the Kor\'anyi sphere centred at the origin and of radius $R$ given by
    $$
    u(x,y,t)=(x^2+y^2)^2+t^2=R^4.
    $$
    Then, $f(r)=\sqrt{R^4-r^4}$ and
     $$
     K_{S_R}^h =\frac{6r^4-2R^4}{r^2R^4},\quad Q_{S_R}^h =\frac{3 r^4-R^4}{r R^2 \sqrt{R^4-r^4}},\quad 
     H_{S_R}^h =\frac{3r}{R^2}.
     $$
    \end{ex}
    \begin{ex}\label{cc-bubble}
      {\bf CC-sphere and bubble set.}  For this example, we refer to \cite{CDPT}; the reader should mind the different conventions of notation. The projections of the shortest horizontal curves (CC-geodesics) that join the origin $O$ of the Heisenberg group and an arbitrary point $p=(z,t)$ are solutions of the isoperimetric problem in the complex plane, that is, they are lifts of Euclidean circles of the form
      \begin{equation}\label{H-geod}
          \gamma_{k,\phi}(\tau)=\left(\frac{1-e^{ik\tau}}{k}e^{i\phi},\,\frac{2}{k^2}(\sin(k\tau)-k\tau)\right),
      \end{equation}
      where $k\in\R$, $\phi\in [0,2\pi)$ and $\tau$ lies in any integral of length $2\pi/|k|$. For a point $p$ such that $z\neq 0$, $\gamma_{k,\phi}$ is unique; in the particular case where $k=0$, the curve $\gamma_{k\phi}$ is a straight line through the origin.

      Setting $\tau=R>0$ in (\ref{H-geod}), the {\it Carnot-Carath\'eodory sphere $S^R_{cc}=S_{cc}(O,R)$ centred at $0$ and of radius $R$} is given by the surface patch
        $$
        \sigma(k, \phi) =
\left(\frac{1-e^{ikR}}{k}e^{i\phi},\, \frac{2}{k^2}(\sin(k R)- kR)\right),
$$
with $(k,\phi)\in(-2\pi/R,2\pi/R)\times(0,2\pi)$. We write
$$
r=A(k):=\frac{2\sin(kR/2)}{k},\quad t=B(k):=\frac{2}{k^2}(\sin(kR)-kR).
$$
Then $t=f(r)=(B\circ A^{-1})(r)$ and we calculate
\begin{eqnarray*}
 f'(r)&=&\frac{dB}{dk}(k)\cdot\frac{dA^{-1}}{dr}(r)=\frac{\frac{dB}{dk}(k)}{\frac{dA}{dk}(k)}\\
 &=&\frac{2}{k}\cdot\frac{(kR)(\cos(kR)+1)-2\sin(kR)}{(kR)\cos(kR/2)-2\sin(kR/2)} =\frac{4}{k}\cos (kR/2),
\end{eqnarray*}
and $(f')^2+4r^2=16/k^2$. 
Therefore
$$
K_{S^R_{cc}}^h=-k^2\frac{(kR)\cos(kR/2)-\sin(kR/2)}{(kR)\cos(kR/2)-2\sin(kR/2)} .
$$
Moreover,
$$
 f''(r)=-\frac{2 \left(k R \sin \left(\frac{k R}{2}\right)+2 \cos
   \left(\frac{k R}{2}\right)\right)}{k^2},
$$
from where we obtain
\begin{eqnarray*}
   H_{S^R_{cc}}^h &=& \frac{\csc ^2\left(\frac{k R}{2}\right) (3 k R+6 \sin (k R)+\sin (2 k R)+k R (\cos (2 k R)-12 \cos (k R)))}{16
   \frac{1}{|k|} \left(k R \cot \left(\frac{k R}{2}\right)-2\right)},
\end{eqnarray*} 
and , from Proposition \ref{prop-K-Q-ac} , we obtain
\begin{eqnarray*}
    Q_{S^R_{cc}}^h &=& \frac{\|\nabla_H u\|}{4} K_{S^R_{cc}}^h.
   \end{eqnarray*}
    We next set $s=k\tau$ and $R=1/|k|$ in (\ref{H-geod}); we also apply the vertical translation $(z,t)\mapsto (z, t+2\pi R^2)$. The {\it bubble set $\mathcal{B}_R=\mathcal{B}(O,R)$ centred at $O$ and of radius $R>0$}  is the surface of revolution defined by the surface patch
        $$
        \sigma_R(s, \phi) = 2R\left(\sin(s/2R)e^{i\phi},\, R \sin(s/R)-s+\pi R
\right),\quad (s, \phi) \in(0, 2\pi R) \times (0, 2\pi).
        $$
        We have
        $$
        r=|z(s)|=2R\sin(s/2R),\quad t=2R(R\sin(s/R)-s+\pi R).
        $$
        Eliminating $s$ we have the formula
        $$
        t=f(r)=r\sqrt{4R^2-r^2}-4R^2\arcsin(r/(2R))+2\pi R^2.
        $$
        Straightforward calculations then deduce
        $$
     K_{\mathcal{B}_R}^h =\frac{1}{R^2}-\frac{2}{r^2},\quad Q_{\mathcal{B}_R}^h =\frac{Rr}{\sqrt{4R^2-r^2}}\left(\frac{1}{R^2}-\frac{2}{r^2}\right),\quad 
     H_{\mathcal{B}_R}^h =\frac{1}{R}.
     $$
     Note that in this particular case, the horizontal mean curvature may be computed using Proposition \ref{k-H}: in fact, it is equal to the signed curvature of the circles
     $$
     z_\phi(s)=2R\sin(s/2R)e^{i\phi}.
     $$
    \end{ex}
    \begin{prop}\label{HKineq-Heis}
For surfaces of revolution in the Heisenberg group, the following sharp inequality holds:
$$
(H_\Sigma^h)^2-K_\Sigma^h>0. 
$$
\end{prop}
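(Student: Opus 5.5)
The plan is a direct computation from the formulas (\ref{K-rev}) and (\ref{H-rev}) for a surface of revolution $t=f(r)$. We work on the non-characteristic part, where $r>0$ and hence $w:=4r^2+f'(r)^2>0$. To lighten notation, set $a=f'(r)$ and $b=f''(r)$.

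First we simplify $K^h_\Sigma$. Since $\frac{d}{dr}w=8r+2ab$, formula (\ref{K-rev}) becomes
$$
K^h_\Sigma=\frac{8rab-32r^2-16a^2}{w^2}.
$$
From (\ref{H-rev}) we have $(H^h_\Sigma)^2=(4r^3b+a^3)^2/(r^2w^3)$. Putting both over the common denominator $r^2w^3>0$ gives
$$
(H^h_\Sigma)^2-K^h_\Sigma=\frac{N}{r^2w^3},\qquad N=(4r^3b+a^3)^2-r^2w\,(8rab-32r^2-16a^2).
$$

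The key step is to expand $N$ and check that the mixed terms in $b$ can be absorbed into a square. The terms $\pm 8r^3a^3b$ cancel. What remains is
$$
N=16r^6b^2-32r^5ab+a^6+16r^2a^4+96r^4a^2+128r^6.
$$
Completing the square in $b$, using $16r^6b^2-32r^5ab=16r^4(rb-a)^2-16r^4a^2$, we obtain
$$
N=16r^4\,(rf''-f')^2+f'^6+16r^2f'^4+80r^4f'^2+128r^6 .
$$
Every term is nonnegative and $128r^6>0$ for $r>0$. Hence $N>0$, and therefore $(H^h_\Sigma)^2-K^h_\Sigma>0$ at every non-characteristic point.

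The main risk is purely arithmetic: the sign bookkeeping in $N$, and in particular the exact cancellation of the $r^3a^3b$ terms. I would double-check these against the explicit examples already computed. For the plane $\Pi_0$ one has $f\equiv0$, and indeed $H=0$, $K=-2/r^2$. For the bubble set $\mathcal B_R$, the formula gives $H^2-K=2/r^2>0$, which is consistent with $H=1/R$ and $K=1/R^2-2/r^2$.

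For sharpness I would note that no positive constant lower bound can hold. On $\Pi_0$ we have $(H^h)^2-K^h=2/r^2$, which tends to $0$ as $r\to\infty$. So the strict inequality $>0$ is the best possible uniform statement.
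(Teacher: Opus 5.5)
Your proof is correct and takes the same route as the paper: both put $(H^h_\Sigma)^2-K^h_\Sigma$ over the denominator $r^2(4r^2+f'^2)^3$ and complete the square in $f''$, so that the numerator becomes $16r^4(rf''-f')^2$ plus manifestly positive terms. Your expansion is in fact the correct one. The paper's displayed numerator contains $16r^2\cdot 5f'(r)^2$, whereas homogeneity and your computation give $80r^4f'(r)^2$; this typo does not affect the positivity conclusion.
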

\begin{proof}
Straightforward calculations deduce the following equality:
$$
(H_\Sigma^h)^2-K_\Sigma^h=\frac{16r^6( f''(r)- f'(r)/r)^2+16r^2( f'(r)^4+  5 f'(r)^2+8r^4)+ f'(r)^6}{r^2(4r^2+ f'(r)^2)^3}>0.
$$ 
\end{proof}
\subsubsection{Constant horizontal Gauss curvature of surfaces of revolution}
\begin{thm}\label{thm:H-constK}
    The surfaces of revolution $\Sigma =\{(t,r):t=f(r)\}$ in the Heisenberg group that have constant horizontal Gaussian curvature $K_\Sigma^h=k$ are given by
    \begin{enumerate}
        \item the two parameter family of surfaces \begin{equation}\label{0SOR}
 (t-C')^2=\frac{1}{9C^2}(Cr^2-4)^3,\quad Cr^2-4>0,
\end{equation} if $k=0$,
\item the two parameter family of surfaces, written as an integral,
\begin{eqnarray*}
    f(r)=\pm \int_{r_0}^r 2s\sqrt{\frac{\pm s^4+4s^2-C}{C\mp s^4}} ds,\quad C\mp r^4>0,\,\pm r^4+4r^2-C>0,
\end{eqnarray*}
for a constant $r_0>0$ if $k=\pm 1$. \\
In the special case of $C=0$ we have the one parameter family of surfaces
\begin{equation}\label{Kneg-c0}
t-C'=\pm r\sqrt{(4-r^2)}\pm 4\arctan\left(\frac{r}{\sqrt{4-r2}}\right).
\end{equation}
    \end{enumerate}
\end{thm}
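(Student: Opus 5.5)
The plan is to reduce the condition $K^h_\Sigma=k$ to an ODE for the single function $w(r):=4r^2+f'(r)^2=\|\nabla_Hu\|^2$, solve that ODE explicitly, and then recover $f$ by one quadrature. By \eqref{K-rev}, the condition $K_\Sigma^h=k$ reads
$$
4r\,w'(r)-16\,w(r)=k\,w(r)^2 .
$$
This is a Bernoulli equation. On the non-characteristic region we have $w>0$, so we may substitute $v=1/w$. Then $w'=-v'/v^2$, and after multiplying through by $v^2$ the equation becomes the linear equation
$$
r\,v'+4v=-\tfrac{k}{4}.
$$
Its general solution is $v=-\tfrac{k}{16}+D\,r^{-4}$ with $D$ a constant. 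Hence
$$
w=\frac{16r^4}{16D-k\,r^4}, \qquad f'(r)^2=w-4r^2 .
$$
For $k\neq 0$ I would first normalise $k=\pm1$ using the dilations of Proposition \ref{K-HHinvariance}. Dilations map surfaces of revolution to surfaces of revolution and rescale $K^h_\Sigma$ by a positive factor, so only the sign of $k$ matters. (I would double-check the power of $\delta$ in that proposition: direct scaling suggests $\delta^{-2}$ for $K$, but only positivity of the factor is needed here.)

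\emph{Case $k=0$.} Put $C=1/D$. Then $f'^2=r^2(Cr^2-4)$, so $f'=\pm r\sqrt{Cr^2-4}$, which requires $Cr^2-4>0$. Integrating gives $f=\pm\frac{1}{3C}(Cr^2-4)^{3/2}+C'$. Squaring yields $(t-C')^2=\frac{1}{9C^2}(Cr^2-4)^3$, which is \eqref{0SOR}.

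\emph{Case $k=\pm1$.} Put $C=16D$. Then
$$
f'^2=\frac{16r^4}{C\mp r^4}-4r^2=4r^2\,\frac{\pm r^4+4r^2-C}{C\mp r^4}.
$$
We need $C\mp r^4>0$ so that $w$ is positive and finite, and $\pm r^4+4r^2-C>0$ so that $f'$ is real and nonzero where required. Taking square roots and integrating from some $r_0>0$ gives the stated integral formula. In general this is an elliptic integral, so no closed form is to be expected.

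\emph{Subcase $C=0$.} For $k=+1$ the condition $C-r^4>0$ fails, so only $k=-1$ survives. Then $f'^2=4(4-r^2)$, so $f'=\pm2\sqrt{4-r^2}$. Integrating gives $\pm\big(r\sqrt{4-r^2}+4\arcsin(r/2)\big)$, and $\arcsin(r/2)=\arctan\big(r/\sqrt{4-r^2}\big)$, which is \eqref{Kneg-c0}.

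Conversely, each displayed $f$ satisfies the ODE, so every such surface has constant $K^h_\Sigma=k$; this gives the classification. The main point of care is the domain bookkeeping rather than the integration. One must ensure $w>0$, that $f'^2\ge0$, and that characteristic points (which occur only on the $t$-axis) are excluded. One must also track how the sign choice in $k=\pm1$ interacts with the sign of $C$, which produces the inequalities listed in the statement. The dilation normalisation should be stated explicitly, since the theorem only lists $k\in\{0,\pm1\}$.
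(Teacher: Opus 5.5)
Your proof is correct and follows the paper's route: via \eqref{K-rev} you reduce $K^h_\Sigma=k$ to a Bernoulli equation for $4r^2+f'(r)^2$, solve it, normalise $k$ by dilations, and recover $f$ by one quadrature. The only difference is that the paper treats $k=0$ separately, as a Bernoulli equation in $f'$, while your substitution $v=1/w$ handles all $k$ at once. Your side remarks are also right. $K^h_\Sigma$ scales by $\delta^{-2}$ under $D_\delta$, which does not matter here since only the sign of $k$ is used. For $C=0$ one gets $f'=\pm2\sqrt{4-r^2}$, consistent with \eqref{Kneg-c0}.
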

\begin{proof}
Let $k\in\R$; our aim is to find all surfaces of revolution satisfying $K_\Sigma^h=k$. This is equivalent to the ODE
\begin{equation}\label{constKH}
  r f' f''-2 (f')^2-4r^2=\frac{k}{8}(4r^2+ (f')^2)^2.  
\end{equation}
Due to Proposition \ref{K-HHinvariance} we may normalise so that $k=-1,0,1$.
By setting $g= f'$,  ODE \ref{constKH} simplifies to
\begin{equation}\label{constKHr}
  rg\frac{dg}{dr}-2g^2-4r^2= \frac{k}{8}(4r^2+g^2)^2. 
\end{equation}
\noindent{\it Case I: $k=0$.} Equation (\ref{constKHr}) becomes
$$
rg\frac{dg}{dr}-2g^2-4r^2=0\iff \frac{dg}{dr}-\frac{2g}{r}=\frac{4r}{g}.
$$
This is a Bernoulli equation whose general solution is
$$
g(r)=\pm r\sqrt{Cr^2-4},\quad C\in\R,\, Cr^2-4>0.
$$
Therefore
$$
f(r)=\frac{1}{3C}(Cr^2-4)^{3/2}+C',\quad C'\in\R.
$$
Hence surfaces of revolution with zero horizontal Gauss curvature are within the two parameter family of surfaces given by \eqref{0SOR}. 

\medskip

\noindent{\it Case II: $k=\pm 1$.} We set $G=4r^2+g^2$ and we have the equation
$$
\frac{dG}{dr}-\frac{4G}{r}=\pm\frac{G^2}{4r}.
$$
This is a Bernoulli equation whose solution is
$$
G(r)=\left\{\begin{matrix}
    \frac{16r^4}{C-r^4}& &k=1,\, C\in\R,\, C-r^4>0,\\
    \\
     \frac{16r^4}{C+r^4}& &k=-1,\, C\in\R,\, C+r^4>0.
\end{matrix}\right.
$$
This gives
$$
g^2(r)=4r^2\frac{\pm r^4+4r^2-C}{C\mp r^4},\quad \pm r^4+4r^2-C>0.
$$
Therefore
$$
\frac{df}{dr}=\pm 2r\sqrt{\frac{\pm r^4+4r^2-C}{C\mp r^4}},\quad C\mp r^4>0,\,\pm r^4+4r^2-C>0.
$$
In the special case when $C=0$ ($k=-1$) the previous equation becomes
$$
\frac{df}{ds}=\pm2r\sqrt{\frac{4}{r^2}-1},\quad r^2<16.
$$
Equivalently, 
$$
\frac{df}{dr}=\pm\sqrt{4-r^2},\quad r<4,
$$
from where we obtain
\begin{equation*}
\pm f(r)=\sqrt{4-r^2} r+4 \arctan\left(\frac{r}{\sqrt{4-r^2}}\right)+C',
\end{equation*}
and the surfaces \eqref{Kneg-c0}.
When $C\neq 0$, the solution to this ODE is an elliptic integral. 
\end{proof}
\subsubsection{Constant horizontal mean curvature of surfaces of revolution}
\begin{thm}\label{thm:H-constH}
    The surfaces of revolution $\Sigma =\{(t,r):t=f(r)\}$ in the Heisenberg group that have constant horizontal mean curvature $H_\Sigma^h=h$ are given by
    \begin{enumerate}
        \item the two parameter family of surfaces given by
        \begin{equation}\label{HMC0-SOR-H}
 (t-C')^2= (C/8)^2\left(16r^2-C^2\right),  
\end{equation}
if $h=0$.
\item the two parameter family of surfaces given by \begin{eqnarray}\label{hmkcheis}
t-C'&=&\mp\frac{1}{2h}\sqrt{(8/h)(s-C)-s^2}\pm\frac{2}{h^2}\arcsin\left(\frac{s-4/h}{\sqrt{16/h^2-8C/h}}\right),
\end{eqnarray}
for $s=2hr^2+C$ $16/h^2-8C/h>0$ and $(8/h)(s-C)-s^2>0$, if $h\neq 0$.
    \end{enumerate}
\end{thm}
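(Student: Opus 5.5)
The plan is to reduce $H_\Sigma^h=h$ to an ODE in $r$ via formula \eqref{H-rev} and to integrate it twice in closed form. Throughout, set $g=f'$ and $G=4r^2+g^2$, so that \eqref{H-rev} reads $H_\Sigma^h=-(4r^3g'+g^3)/(rG^{3/2})$. The equation $H_\Sigma^h=h$ then becomes
\begin{equation*}
4r^3g'+g^3=-h\,r\,G^{3/2}.
\end{equation*}

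\textbf{Step 1: an exact derivative.} I would look for a first integral and take $\psi=rg/\sqrt{G}$ as the candidate. Differentiating,
\begin{equation*}
\frac{d\psi}{dr}=\frac{(g+rg')G-rg(4r+gg')}{G^{3/2}}=\frac{g^3+4r^3g'}{G^{3/2}}.
\end{equation*}
So the equation is exactly $\psi'=-hr$. Integrating gives
\begin{equation*}
\frac{rg}{\sqrt{4r^2+g^2}}=K(r):=\frac{C-hr^2}{2}
\end{equation*}
for an arbitrary constant $C$.

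\textbf{Step 2: solve for $f'$.} Squaring gives $g^2(r^2-K^2)=4r^2K^2$. Hence
\begin{equation*}
f'(r)=\pm\frac{2rK}{\sqrt{r^2-K^2}},
\end{equation*}
valid on the set where $r^2>K^2$; this set gives the admissibility conditions in the statement. The sign is fixed by the sign of $K$, since $\psi$ and $g$ have the same sign.

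\textbf{Step 3: integrate, case $h=0$.} Here $K$ is constant. A primitive is $f=\pm 2K\sqrt{r^2-K^2}+C'$, so $(t-C')^2=4K^2(r^2-K^2)$. Renaming $K=C/4$ gives exactly \eqref{HMC0-SOR-H}.

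\textbf{Step 4: integrate, case $h\neq 0$.} Substitute $u=r^2$, so that $2r\,dr=du$ and $df=\pm K(u)\,du/\sqrt{u-K(u)^2}$. Here $K$ is affine in $u$ and the radicand $u-K^2$ is a concave quadratic in $u$. Then I would pass to the variable $s=2hr^2+C$ (equivalently, a linear function of $K$) and complete the square. The integrand splits as a multiple of (derivative of the radicand)$/\sqrt{\text{radicand}}$ plus a constant multiple of $1/\sqrt{\text{radicand}}$. The first piece gives the square-root term and the second gives the $\arcsin$ term of \eqref{hmkcheis}. The positivity of the discriminant, $16/h^2-8C/h>0$, is exactly the condition for the radicand to be positive somewhere, so the surface is nonempty.

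\textbf{Main obstacle.} The only genuinely non-routine step is spotting the exact derivative in Step 1. The rest is bookkeeping of constants and signs in Step 4 to match the normalisation of \eqref{hmkcheis}, and I expect that matching to be where slips are most likely.
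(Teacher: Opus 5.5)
Your proof is correct and is essentially the paper's. The paper's auxiliary function $g=1/f'^2+1/(4r^2)$ equals $1/(4\psi^2)$, so its separable equation $dg/dr=4hr\,g^{3/2}$ is your identity $\psi'=-hr$ rewritten in terms of $\psi^{-2}$. After that, both arguments integrate directly for $h=0$, and for $h\neq0$ use $s=2hr^2+C$ and complete the square. To match \eqref{hmkcheis} literally, replace your $C$ by $-C/2$, so that $s=-4K$.

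Keeping $\psi$ signed is a real gain. It gives $f'=2rK/\sqrt{r^2-K^2}$ with no free sign, hence only the lower-sign branch of \eqref{hmkcheis}, the one containing the bubble set. Passing to the unsigned $\psi^{-2}$ loses this information: for $h\neq0$ the paper's equation for $g$ holds only where $f'>0$. It then produces a $\pm$ whose upper-sign branch consists of the reflections $f\mapsto -f$ of solutions, and these have $H^h_\Sigma=-h$.
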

\begin{proof}
If $h\in\R$ is a constant, we set 
$$
g=\frac{1}{ f'(r)^2}+\frac{1}{4r^2},
$$ 
and we obtain the ODE
$$\frac{dg}{dr}=4hrg^{3/2}.$$
By separating the variables, this gives
$$g=\frac{4}{(2hr^2+C)^2}$$
where $C\in\R$ and $2hr^2>-C$. In this way, we deduce
$$
\frac{1}{ f'(r)^2}+\frac{1}{4r^2}=\frac{4}{(2hr^2+C)^2}\implies  f'(r)=\pm\frac{2r(2hr^2+C)}{\sqrt{16r^2-(2hr^2+C)^2}},
$$
where $16r^2-(2hr^2+C)^2>0$.
If $h=0$,
$$
 f'(r)=\pm\frac{2Cr}{16r^2-C^2},
$$
which gives
$$
f(r)=\pm\frac{C}{8}\sqrt{16r^2-C^2}+C',\quad r^2>C^2/16.
$$
Therefore the zero horizontal mean curvature surfaces (or, {\it horizontally minimal surfaces} of revolution are given by the 2-parametric family of hyperboloids \eqref{HMC0-SOR-H}. 

If $h\neq 0$, by changing the variable in $s=2hr^2+C$ we obtain the differential equation
$$
\frac{df}{ds}=\pm\frac{1}{2h}\frac{s}{\sqrt{(8/h)(s-c)-s^2}}.
$$
The solution is
$$
f(s)=\mp\frac{1}{2h}\sqrt{(8/h)(s-c)-s^2}\pm\frac{2}{h^2}\arcsin\left(\frac{s-4/h}{\sqrt{16/h^2-8C/h}}\right)+C',
$$
and we conclude that the surfaces of revolution with constant horizontal mean curvature are within the 2-parameter family of surfaces 
 given by \eqref{hmkcheis}. In the particular case when  $C=0$, $h\neq 0$, from (\ref{hmkcheis}) 
we obtain the 1-parameter family of surfaces
\begin{equation}\label{Hmcc0}
t-C'=\mp\frac{r}{h}\sqrt{4-h^2r^2}\pm\frac{2}{h^2}\arcsin\left(\frac{h^2r^2-2}{2}\right).
\end{equation}
\end{proof}
\begin{rem}
    Note that for $h=1/R$, $R>0$, (\ref{Hmcc0}) becomes
    $$
    t-C'=\mp r\sqrt{4R^2-r^2}\mp 2R^2\arcsin\left(\frac{r^2}{2R^2}-1\right).
    $$
    To prove that this gives the bubble set, it suffices to prove the formula
    $$
    \arcsin\left(\frac{r^2}{2R^2}-1\right)=2\arcsin\left(\frac{r}{2R}\right)-\frac{\pi}{2}.
    $$
    Indeed, let $\arcsin(r/(2R)=\theta$.
    Then,
    \begin{eqnarray*}
    \arcsin\left(\frac{r^2}{2R^2}-1\right)&=& \arcsin(2\sin^2\theta-1) \\
    &=&\arcsin(-\cos(2\theta))\\
    &=&\arcsin(\sin(2\theta-\pi/2))\\
    &=&2\theta-\frac{\pi}{2},
    \end{eqnarray*}
    and our claim is proved.
\end{rem}
\subsubsection{Constant symplectic distortion of surfaces of revolution}
\begin{thm}\label{thm:H-constQ}
    The surfaces of revolution $\Sigma =\{(t,r):t=f(r)\}$ in the Heisenberg group that have constant symplectic distortion $Q_\Sigma^h=q$ are given by
    \begin{enumerate}
        \item \eqref{0SOR} if $q=0$;
        \item the two parameter family of surfaces defined by \begin{eqnarray*}
            f(r) &=& \int^r_{r_0}  2s \tan \left( \pm \arccos \left(-\frac{q s}{2}-\frac{c_1}{s}\right) \right)\;ds,\
        \end{eqnarray*}
         where $r_0>0$ is a constant, if $q\neq 0$.
    \end{enumerate}
    \end{thm}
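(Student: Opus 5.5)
The plan is to reduce the condition $Q_\Sigma^h=q$ to a first-order linear ODE by a trigonometric substitution for $f'$. We start from formula \eqref{Q-rev}. With $G(r):=4r^2+f'(r)^2$ it reads
$$
Q_\Sigma^h=\frac{rG'-4G}{G^{3/2}},
$$
so constant symplectic distortion is a first-order ODE in $g=f'$. Guided by $G=4r^2+g^2$, we set
$$
f'(r)=2r\tan\varphi(r),\qquad \varphi\in(-\pi/2,\pi/2).
$$
Then $G=4r^2\sec^2\varphi$ and $G^{1/2}=2r\sec\varphi$ for $r>0$, which we may assume since the characteristic points lie on the $t$-axis.

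The first key step is computing $Q$ in terms of $\varphi$. From $G'=8r\sec^2\varphi+8r^2\sec^2\varphi\tan\varphi\,\varphi'$ we get
$$
rG'-4G=\sec^2\varphi\,(8r^3\tan\varphi\,\varphi'-8r^2),
$$
and dividing by $G^{3/2}=8r^3\sec^3\varphi$ gives
$$
Q_\Sigma^h=\sin\varphi\,\varphi'-\frac{\cos\varphi}{r}.
$$
With $w:=\cos\varphi$ this becomes
$$
Q_\Sigma^h=-w'-\frac{w}{r}=-\frac1r\,(rw)'.
$$
So the condition $Q_\Sigma^h=q$ is the linear equation $(rw)'=-qr$. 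Integrating yields $rw=-qr^2/2-c_1$, that is,
$$
\cos\varphi=-\frac{qr}{2}-\frac{c_1}{r}.
$$
Hence $\varphi=\pm\arccos\bigl(-\tfrac{qr}{2}-\tfrac{c_1}{r}\bigr)$, where $c_1$ is constrained so that the argument lies in $(-1,1)$ and is nonzero, so that $\tan\varphi$ is finite. One more integration gives $f(r)=\int_{r_0}^r 2s\tan\bigl(\pm\arccos(-\tfrac{qs}{2}-\tfrac{c_1}{s})\bigr)\,ds$, which is case (2). The integration constant $r_0$ together with $c_1$ (and the additive constant absorbed in $r_0$) gives the two-parameter family. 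The sign choice reflects the two branches $\tan\varphi=\pm\sqrt{1-w^2}/w$.

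For $q=0$ we identify the family explicitly. Here $w=-c_1/r$, so
$$
f'=2r\,\frac{\pm\sqrt{1-c_1^2/r^2}}{-c_1/r}=\mp\frac{2r}{c_1}\sqrt{r^2-c_1^2},
$$
whence $f=\mp\frac{2}{3c_1}(r^2-c_1^2)^{3/2}+C'$. Squaring and setting $C=4/c_1^2$ turns this into $(t-C')^2=\frac{1}{9C^2}(Cr^2-4)^3$, which is \eqref{0SOR}. As a consistency check, since $Tu=1$, Proposition \ref{prop-K-Q-ac} says $Q_\Sigma^h=0$ if and only if $K_\Sigma^h=0$, so this must agree with Case I of Theorem \ref{thm:H-constK}, and it does.

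The only genuinely nontrivial step is finding the substitution that linearises \eqref{Q-rev}; a direct Bernoulli-type attempt in $g$ or $G$ is messy. The remaining work is bookkeeping of the domains, namely $|qr/2+c_1/r|<1$, and of the sign branches.
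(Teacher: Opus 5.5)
Your argument is essentially the paper's proof: the same substitution $f'=2r\tan\varphi$ reduces $Q_\Sigma^h=q$ to $\sin\varphi\,\varphi'-\cos\varphi/r=q$ with solution $\cos\varphi=-\frac{qr}{2}-\frac{c_1}{r}$, and you additionally make the linearisation $(r\cos\varphi)'=-qr$ explicit and treat $q=0$ by direct integration instead of citing Corollary \ref{ZeroQ-K-Cor}. One correction to your domain bookkeeping: since you fixed $\varphi\in(-\pi/2,\pi/2)$ to get $G^{1/2}=2r\sec\varphi$, the constraint is $0<-\frac{qr}{2}-\frac{c_1}{r}\le 1$, not $(-1,1)\setminus\{0\}$, because for $\cos\varphi<0$ one has $G^{1/2}=-2r\sec\varphi$, so the reduced equation changes sign and the surface has $Q_\Sigma^h=-q$ (e.g.\ $q=2$, $c_1=0$ gives $f'=-2\sqrt{1-r^2}$, $G\equiv 4$, $Q_\Sigma^h=-2$).
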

    \begin{proof}
Let $q\in\R$ be a constant; we will solve $Q^h_\Sigma=q$. If $q=0$, we use corollary \ref{ZeroQ-K-Cor} to obtain \eqref{0SOR}. If $q\neq 0$, we set 
$g(r)=f'(r)$ and \eqref{Q-rev} becomes
$$
     Q_\Sigma^h=\frac{r\frac{d}{dr}(4r^2+ g(r)^2)-4(4r^2+ g(r)^2)}{(4r^2+ g(r)^2)^{3/2}}.
$$
Using the substitution $ g(r) = 2r \tan \phi (r)$,
we write the ODE $Q_\Sigma^h =q$ equivalently as
$$
    \sin \phi (r) \phi '(r) -\frac{\cos \phi (r)}{r} = q,
    $$ 
    which implies
    $$
    \phi (r)=\pm \arccos \left(-\frac{q r}{2}-\frac{c_1}{r}\right),
$$
for a constant $c_1.$ \\
This yields 
$$
    f'(r)= 2r \tan \left( \pm \arccos \left(-\frac{q r}{2}-\frac{c_1}{r}\right) \right),
$$
and our claim is proved.
    \end{proof}
\subsection{Affine-additive group}\label{Sec:ex-AA}
 The \textit{affine-additive group} $\mathcal{AA}$ (for a detailed description, see \cite{B-Thesis}), is defined as $\mathcal{AA}:=\mathbb{R} \times \mathbf{H}^1_{\mathbb{C}}$, where $\mathbf{H}^1_\C:=\{(\lambda,t):\lambda>0,t\in\R\}$ is the right half-plane model for the hyperebolic plane. The group operation is given by 
 $$
 p \cdot p'=(a+a',\lambda \lambda' , \lambda t'+t),
 $$ 
 for every $p=(a,\lambda,t)$ and $p'=(a',\lambda',t')$.
An orthonormal basis for the tangent bundle $\rm{T}\,\mathcal{AA}$ comprises the left-invariant vector fields
$$V=2\lambda \partial_\lambda, \, U=\partial_a+2\lambda \partial_t, \, W =-\partial_a.$$
The only non-vanishing Lie bracket relation is $[V,U]=2(U+W)$ and, following the notation from Proposition \ref{Prop: abc}, we have $b_3=c=2$ and $a_i=a_3=b_i=0$, $i=1,2$. 
The contact form is $$\vartheta =\frac{dt}{2\lambda}-da,$$ so that $V,U\in\ker\vartheta$ and $W$ is the Reeb vector field. The corresponding coframe comprise the 1-forms $$\omega_1=\frac{dt}{2\lambda}, \; \omega_2=\frac{d\lambda}{2\lambda}, \; \vartheta=\frac{dt}{2\lambda}-da.$$
The sectional curvatures of the characteristic planes are calculated by using \eqref{SecCurvXY},\eqref{SecCurvXT} and \eqref{SecCurvYT}; that is, 
\begin{equation*}
K_\epsilon(X,Y)=-4-\frac{3}{\epsilon^2}\quad
K_\epsilon(X,T_\epsilon)=K_\epsilon(Y,T_\epsilon)=1/\epsilon^2.
\end{equation*}
The isometry group $\rm{Isom}_\mathcal{H}(\mathcal{AA})$ is just $\Aa\times\mathbb{Z}_2$: that is, it is generated by left translations $L_{(a_0 ,\lambda_0, t_0)}:\mathcal{AA}\to\mathcal{AA}$ with respect to $(a_0 ,\lambda_0, t_0)\in\Aa$ and
the conjugation $j:\Aa \to \Aa$ given by 
\begin{eqnarray*}
j(a ,\lambda ,t) =(-a ,\lambda ,-t),   
\end{eqnarray*}
(see Section 6.2 in \cite{H-ISR}).
\subsubsection{Surfaces embedded in the affine-additive group}\label{Sec:AA-surf}
Let $\Sigma$ be a regular hypersurface of $\Aa$ defined by the equation $u=0$. Then the horizontal Gauss curvature $K_\Sigma^h$ of $\Sigma$  is given by 
\begin{eqnarray*}
    K_\Sigma^h &=& 2E_1\left( \frac{Wu}{\|\nabla_H u\|}  \right) -4\left( \frac{Wu}{\|\nabla_H u\|} \right)^2,
\end{eqnarray*}
for $E_1 =-\overline{q} V+\overline{p}U$. The horizontal mean curvature $H_\Sigma^h$ of $\Sigma$ is given by
\begin{equation*}
  H_\Sigma^h=V\left(\frac{Vu}{\|\nabla_Hu\|}\right) +U\left(\frac{Uu}{\|\nabla_Hu\|}\right)-2\frac{Vu}{\|\nabla_H u\|}. 
\end{equation*}
and the symplectic distortion $Q_\Sigma^h$ of $\Sigma$ is given by
\begin{equation*}
  Q_\Sigma^h=V\left(\frac{Uu}{\|\nabla_Hu\|}\right) -U\left(\frac{Vu}{\|\nabla_Hu\|}\right)-2\frac{Uu}{\|\nabla_H u\|}.
\end{equation*}
\begin{ex}\label{ex-cyl-AA}
     {\bf $\Aa$-Cylindrical surfaces.}
     $\Aa$-Cylindrical surfaces $\Sigma$ are defined by equations of the form $u(a,\lambda,t)=g(\lambda,t)=0$. These surfaces are invariant by translations of the form
     $$
     (a,\lambda,t)\mapsto (a'+a,\lambda,t),\quad a'\in\R.
     $$
    The horizontal Gauss curvature of all these surfaces is zero. Moreover,
$$
H_\Sigma^h=2\frac{ g_{\lambda}^2( \lambda  g_{tt}-g_\lambda)+g_{t}^2
   \left( \lambda  g_{\lambda \lambda}- g_{\lambda}\right)-2 \lambda  g_{t}g_{\lambda t} g_{\lambda} }{\left(g_{\lambda}^2+g_{t}^2\right)^{3/2}},
$$
and
$$
Q_\Sigma^h=2\frac{g_\lambda^2(\lambda g_{\lambda t}-g_t)-g_t^2(\lambda g_{\lambda t}+g_t)+\lambda g_\lambda g_t  (g_{t t}-g_{\lambda\lambda})}{(g_\lambda^2+g_t^2)^{3/2}},
$$
\begin{rem}
    When $\Sigma$ is a vertical plane, $g(\lambda,t)=A\lambda+Bt$  and we have $H_\Sigma^h=-\frac{2 A}{\sqrt{A^2+B^2}}$ and $Q_\Sigma^h=-\frac{2 B}{\sqrt{A^2+B^2}}.$ In the case when $g(\lambda,t)=(\lambda-\lambda_0)^2+(t-t_0)^2-R^2$, we have $H_\Sigma^h=\frac{2 \lambda_0}{R}$, $Q_\Sigma^h=-\frac{2 (t-t_0)}{R}$.
    \end{rem}
 \end{ex} 
\subsubsection{Invariance}
The following holds:
\begin{prop}
    The horizontal Gaussian curvature $K_\Sigma^h$ of a regular surface $\Sigma$ embedded in $\Aa$ is invariant under all maps of the form $h(a,\lambda ,t) =(a+a_0, \delta \lambda , \pm\delta t+t_0)$, where $\delta>0$ and $a_0,t_0\in\R$.
\end{prop}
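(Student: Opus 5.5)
The plan is to split the map $h(a,\lambda,t)=(a+a_0,\delta\lambda,\pm\delta t+t_0)$ into simpler pieces and treat each separately. With the sign $+$, $h$ is exactly the left translation $L_{(a_0,\delta,t_0)}$. Indeed, by the group law, $(a_0,\delta,t_0)\cdot(a,\lambda,t)=(a_0+a,\delta\lambda,\delta t+t_0)$. Left translations belong to ${\rm Isom}_{CC}(\Aa)$, so for $\Sigma'=h(\Sigma)$ with defining function $u'=u\circ h^{-1}$, Corollary \ref{cor-cr-K} gives $K^h_{\Sigma'}(h(p))=K^h_\Sigma(p)$ at once. More concretely, $V,U,W$ are left-invariant, so $Vu'$, $Uu'$, $Wu'$ and $E_1$ transform by composition with $h^{-1}$, and formula (\ref{HGC}) is preserved term by term.

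For the sign $-$, I will write $h=L_{(a_0,\delta,t_0)}\circ\sigma$, where $\sigma(a,\lambda,t)=(a,\lambda,-t)$. The first case then reduces the claim to invariance under $\sigma$ alone. To handle $\sigma$, I will compute $\sigma_*$ on the frame: $\sigma_*V=V$, $\sigma_*W=W$, and $\sigma_*U=\partial_a-2\lambda\partial_t$. Using $\partial_a=-W$ and $2\lambda\partial_t=U+W$, this last equals $-U-2W$. For $u'=u\circ\sigma$ this gives, at corresponding points,
\[
Vu'=Vu,\qquad Wu'=Wu,\qquad Uu'=-(Uu+2Wu).
\]
I will substitute these into the expression $K^h_\Sigma=2E_1(Wu/\|\nabla_Hu\|)-4(Wu/\|\nabla_Hu\|)^2$. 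Alternatively, I can substitute them into the Theorema Egregium form (\ref{K-Q}), which only involves horizontal derivatives via $Q^h_\Sigma$ and $\log\|\nabla_H u\|$. In either form, I will compare the results.

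Step one is immediate. The main obstacle is the $\sigma$-step, because $\sigma$ does not preserve the contact form: $\sigma^*\vartheta=-dt/(2\lambda)-da$ is not a multiple of $\vartheta$. So $\sigma$ does not map $\mathcal H$ to itself, and the horizontal gradient of $u'$ is not simply a rotated or reflected copy of that of $u$. The first thing I would try is to compose $\sigma$ with the conjugation $j$ and with $a\mapsto -a$ type symmetries. The goal would be to express the sign-reversing case through maps that do preserve $\mathcal H$ up to the reflection $U\mapsto -U$, $W\mapsto -W$. Under such a reflection $E_1\mapsto -E_1$ and $Wu/\|\nabla_Hu\|\mapsto -Wu/\|\nabla_Hu\|$, so (\ref{HGC}) is visibly unchanged, since both the $E_1(\cdot)$ term and the square are even under the joint sign flip.

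If that reduction fails, I would fall back on a brute-force expansion of (\ref{HGC}) in coordinates $(a,\lambda,t)$ for $u'=u\circ\sigma$, checking whether the extra $Wu$-terms cancel. Should they not cancel, this would indicate that the $-$ sign must be accompanied by $a\mapsto -a$, as in $j$. In that event I would prove the statement for the maps $L_g$ and $L_g\circ j$, where invariance follows directly from $j\in{\rm Isom}_{CC}(\Aa)$ and Corollary \ref{cor-cr-K}.
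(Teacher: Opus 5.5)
Your treatment of the $+$ sign is correct: $h=L_{(a_0,\delta,t_0)}$, and Corollary~\ref{cor-cr-K} (or directly the left-invariance of $V,U,W$) gives the invariance. The genuine gap is the $-$ sign, which your proposal leaves undecided. In fact it cannot be proved, because with $a\mapsto a+a_0$ it is false. The paper states the proposition without proof, so nothing there handles this case either.

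Your primary strategy is ruled out by the observation you already made. You found $\sigma_*U=-U-2W\notin\mathcal H$. Any composition of maps preserving $\mathcal H$ (left translations, $j$, reflections of the frame) preserves $\mathcal H$, so $\sigma$ cannot be factored through such maps.

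Your transformation rules $Vu'=Vu$, $Wu'=Wu$, $Uu'=-(Uu+2Wu)$ give a counterexample at once. For $\Pi=\{a+t=0\}$ one has $Vu=0$, $Uu=1+2\lambda$ and $Wu=-1$. Hence $E_1=-V$ and $K^h_\Pi=-4/(1+2\lambda)$. For $\sigma(\Pi)=\{a-t=0\}$ one gets $Uu'=1-2\lambda$ and $K^h_{\sigma(\Pi)}=-4/(1-2\lambda)$. This agrees with the paper's formula for planes after replacing $C$ by $-C$. The identity $O=(0,1,0)$ is fixed by $\sigma$ and lies on both planes, and there the two values are $-4/3$ and $4$. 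There is an even blunter obstruction: $\sigma(\Pi)$ has the characteristic line $\lambda=1/2$, near which $K^h$ blows up, while $\Pi$ has no characteristic points and $K^h_\Pi$ is bounded.

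What is true is invariance under $(a,\lambda,t)\mapsto(\pm a+a_0,\delta\lambda,\pm\delta t+t_0)$ with the same sign in both slots, i.e.\ under $L_g$ and $L_g\circ j$. This is presumably what was intended, since $j(a,\lambda,t)=(-a,\lambda,-t)$ is the isometry the paper lists for $\Aa$. For these maps your fallback is exactly the right proof. One has $j_*V=V$, $j_*U=-U$ and $j_*W=-W$, so $j$ is a CC-isometry and Corollary~\ref{cor-cr-K} applies. Concretely, $E_1\mapsto -E_1$ and $Wu/\|\nabla_Hu\|\mapsto -Wu/\|\nabla_Hu\|$, which leaves \eqref{HGC} unchanged.

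You should commit to this rather than make the outcome conditional on a brute-force expansion:
\begin{itemize}
\item state that the literal $-$ case fails;
\item give the plane counterexample;
\item prove the corrected statement for $L_g$ and $L_g\circ j$.
\end{itemize}
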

 \subsubsection{Graphs}\label{Sec-AA-graph}
Let $f:\bH_\C^1$ be a smooth function and consider regular surfaces $\Sigma$ defined by 
$$
\Sigma=\{(a,\lambda,t)\in\Aa:u(a,\lambda,t)=a-f(\lambda,t)=0\}.
$$  
Since
\begin{equation*}
    Vu = -2\lambda f_\lambda, \quad
    Uu = 1-2\lambda f_t,\quad
    Wu=-1,    \end{equation*}
then according to the formulae in Section \ref{Sec:AA-surf} we  obtain:  
\begin{eqnarray*}
       K^h_{\Sigma}&=&\frac{4}{(4\lambda^2f_\lambda^2+(2\lambda f_t-1)^2)^2}\left\{\left(2\lambda(f_t+\lambda f_{t\lambda})-1\right)(2\lambda f_t-1)^2\right.\label{K-graphAA}\\
      &&\left.+ 4\lambda^2 f_\lambda \left(f_\lambda +\lambda(f_{\lambda \lambda} -f_{tt})\right)(2\lambda f_t-1)\right.\nonumber\\
&&\left.-4\lambda^2f_\lambda^2(1+2\lambda^2 f_{\lambda t})\right\}\nonumber, \\
         H_{\Sigma}^h &=& 4\lambda \frac{-\lambda f_{\lambda \lambda}(2\lambda f_t-1)^2+2\lambda f_\lambda (2\lambda f_t-1) (f_t+2\lambda f_{\lambda t})+4\lambda^2 f_\lambda ^2(f_\lambda -\lambda f_{tt} )}{\left(4\lambda^2 f_\lambda^2+(2\lambda f_t-1)^2\right)^{3/2}},\nonumber \\ 
Q_\Sigma^h &=& \frac{1}{\left(4 \lambda ^2 f_{\lambda}^2+\left(1-2 \lambda  f_{t}\right)^2\right)^{3/2}} \{ 4 \lambda  \left(\left(4 \lambda ^3 f_{\lambda t}-6 \lambda
   \right) f_{t}^2+4 \lambda ^2 f_{t}^3 \right. \nonumber\\
   && \left.+ \left(4 \lambda ^2 \left(f_{\lambda} \left(-\lambda 
   f_{tt}+f_{\lambda}+\lambda  f_{\lambda \lambda}\right)-f_{\lambda t}\right)+3\right) f_{t} \right. \nonumber \\
   && \left.+ \lambda  \left(f_{\lambda t}-2 f_{\lambda}
   \left(2 f_{\lambda} \left(\lambda ^2 f_{\lambda t}+1\right)-\lambda  f_{tt}+\lambda  f_{\lambda\lambda}\right)\right)\right)-2 \}. \nonumber
\end{eqnarray*}
\begin{ex}{\bf Planes.}
Let $A,B,C,D\in\R$ and consider 
$$
\Pi=\{(a,\lambda,t)\in\Aa:Aa+B\lambda+Ct+D=0\}.
$$
By applying the above formulae we have
$$
K^h_{\Pi}=-\frac{4 A (A^3 + 6 A^2 C \lambda + 4 A \lambda^2 (2 B^2 + 3 C^2) + 8 C \lambda^3 (B^2 + C^2))}{(A^2 + 4 A C \lambda + 4 \lambda^2 (B^2 + C^2))^2}.
$$
    It is evident that planes with $A=0$ have zero horizontal Gaussian curvature and planes with $C=0$ have negative Gaussian curvature equal to 
$$
K^h_{\Pi}=-\frac{4(A^4+8A^2 B^2\lambda^2)}{(A^2+4 B^2\lambda^2)^2}.
$$
For the horizontal mean curvature we have the formula
\begin{eqnarray*}
    H_{\Pi}^h &=& -\frac{8 B \lambda^2 \left(A C+2 \lambda
   \left(B^2+C^2\right)\right)}{\left((A+2 C \lambda)^2+4 B^2
   \lambda^2\right)^{3/2}}.  
\end{eqnarray*}
Finally, the symplectic distortion is given by
$$Q^h_{\Pi}=-\frac{2 \left(A^3+6 A^2 C \lambda +4 A \lambda ^2 \left(2 B^2+3
  C^2\right)+8 C \lambda ^3
   \left(B^2+C^2\right)\right)}{\left((A+2 C \lambda
   )^2+4 B^2 \lambda ^2\right)^{3/2}}.
   $$
   In particular, the hyperbolic plane $\mathbf H^1_\C=\{(a,\lambda,t)\in\Aa:a=0\}$ has constant negative Gaussian curvature equal to $-4$, zero horizontal mean curvature and constant symplectic distortion equal to -2.   
\end{ex}
\begin{ex}\label{ex-cyl2-AA} 
Let $f:\bH_\C^1\to\R$ be a smooth function with non-vanishing gradient. We consider the surface $$\Sigma=\{(a,\lambda,t)\in\Aa:f(a,\lambda)=0\}.$$ 
Then \begin{eqnarray*}
K_{\Sigma}^h&=&-\frac{4 \left(4 \lambda^3 f_{\lambda} f_{\lambda \lambda} f_{a}^2+4
  \lambda^3 f_{\lambda}^3 f_{aa}+8 \lambda^2 f_{\lambda}^2
   \left(f_{a}-\lambda f_{a\lambda}\right)
   f_{a}+f_{a}^4\right)}{\left(4 \lambda^2
   f_{\lambda}^2+f_{a}^2\right)^2}, \\
  H_\Sigma^h &=&\frac{4 \lambda^2 \left(f_{\lambda \lambda} f_{a}^2+f_{\lambda} \left(f_{\lambda}
   \left(f_{aa}-4 \lambda  f_{\lambda}\right)-2
   f_{a} f_{a \lambda}\right)\right)}{\left(4
   \lambda ^2 f_{\lambda}^2+f_{a}^2\right)^{3/2}},\\
   Q_\Sigma^h &=& \frac{8 \lambda^2 \left(\lambda  f_{a\lambda}-2
   f_{a}\right) f_{\lambda}^2+2 \lambda 
   f_{a} \left(f_{aa}-4 \lambda ^2
   f_{\lambda \lambda}\right) f_{\lambda}-2
   f_{a}^2 \left(f_{a}+\lambda 
   f_{a\lambda}\right)}{\left(4 \lambda^2 f_{\lambda}^2+f_{a}^2\right)^{3/2}}.
   \end{eqnarray*}
   \end{ex}
\subsubsection{Surfaces of revolution}\label{sec-AAsurfrev}
The group $\Aa$ does not have a {\it canonical} axis of revolution. By choosing a one-parameter subgroup $H$ to act as rotational symmetry, the resulting surfaces of revolution will be the surfaces in $\Aa$ which are invariant under the action of $H$. For a general surface of revolution, we may give the following definition.
\begin{defn}
  Let $H$ be a one-parameter subgroup of $\Aa$. $\Sigma$ is a surface of revolution with respect to $H$ if it is invariant under the left translation of $H$.  
\end{defn}
We distinguish three cases.

\medskip

\noindent{\it Case 1.} Let $H_a=\{ (a,0,0) : a\in \mathbb{R} \}$ with action
    $(a,\lambda,t) \mapsto (a_0+a,\lambda ,t).$
    A {\it surface of revolution around the $a$-axis} is invariant under the action of $H_a$. These surfaces are $\Aa$-cylindrical with base curve in ${\bf H}_{\mathbb{C}}^1$ and which are extended vertically along the $a$-axis, in other words,  $\Sigma : u(\lambda,t)=0$, see Example \ref{ex-cyl-AA}.
        
    \medskip
    
\noindent{\it Case 2.} Let $H_t=\{(0,0,t) : t\in \mathbb{R}\}$ acting by
    $(a_0,\lambda_0,t) \mapsto (a_0,\lambda_0,t+t_0).$
    Again, we have  a $\Aa$-cylindrical surface
    extended vertically along the $a$-axis, in other words $\Sigma : u(a,\lambda)=0$, see Example \ref{ex-cyl2-AA}.

    \medskip
    
    \noindent{\it Case 3.} Let  $H=\R_{>0}$ act by transformations $(a,\lambda,t)\mapsto(a,\delta \lambda,\delta t)$, $\delta>0$. Then the corresponding surfaces of revolution are hypersurfaces $\Sigma$ given by $u(a,t/\lambda)=0$.
It is convenient at this point to consider the change of parameters $\lambda \to \lambda$, $t \to \rho\lambda$, $a \to a$; then the frame $V,U,W$ becomes
$$
V = -2 \rho\partial_\rho +2\lambda\partial_\lambda,\quad
    U = 2\partial_\rho+\partial_a,\quad
    W = -\partial_a.
$$
We now consider graphs defined by $u(a,\rho)=a-f(\rho)=0$  and we have:
$$
VP=-4\rho( f'(\rho)+\rho f''(\rho)),\quad VQ=-2\rho f''(\rho),\quad UP=4( f'(\rho)+\rho f''(\rho)),\quad UQ=4 f''(\rho),
$$
and from the discussion in Section \ref{Sec-AA-graph} it follows that
the horizontal Gaussian curvature is given by
\begin{eqnarray} \label{HGausscurvtlAA}
    K^h_\Sigma &=& 4\frac{2\rho f''(\rho)(2(\rho^2+1)f'(\rho)-1)-4(f'(\rho))^2+4f'(\rho)-1
    }{(4(\rho^2+1)f'(\rho)^2-4f'(\rho)+1)^2},
    \end{eqnarray}
    the horizontal mean curvature becomes
\begin{eqnarray} \label{H_f}
    H^h_\Sigma &=& -4\rho \frac{\rho f''(\rho)+4(\rho^2+1)f'(\rho)^3-6(f'(\rho))^2+2f'(\rho)}{(4(\rho^2+1)f'(\rho)^2-4f'(\rho)+1)^{3/2}}
    \end{eqnarray}
    and the symplectic distortion is \begin{eqnarray}\label{Q-rev-AA}
        Q_\Sigma^h &=&2 \frac{2 \rho  f''(\rho )(2(\rho^2+1)f'(\rho)-1)-4 f'(\rho )^2+4 f'(\rho )-1}{\left(4 \left(\rho ^2+1\right)
   f'(\rho )^2-4 f'(\rho )+1\right)^{3/2}}.
    \end{eqnarray}
 \begin{rem}
     One may verify that the characteristic points are the points that satisfy $\rho=0$ and $f'(0)=\frac{1}{2}$.   
     \end{rem}
\subsubsection{Constant horizontal Gaussian curvature of surfaces of revolution} 
\begin{thm}\label{thm:AA-constK}
    Let $f:\R \to \R$ be a $C^2$ function. The surfaces of revolution $\Sigma =\{(a,\rho):a=f(\rho)\}$ that have constant horizontal Gaussian curvature $K_\Sigma^h=k$ are the following:
    \begin{enumerate}
        \item the two parameter family of surfaces defined by
        \begin{equation}\label{K0-revAA}
        f(\rho) = \frac{1}{2} \left(\pm \sqrt{c \rho^2+c-1}\mp \arctan\left(\sqrt{c
   \rho^2+c-1}\right)+\arctan (\rho)\right)+C,\,\,c>0,\,C\in\R,\rho^2>(1/c)-1,
    \end{equation}
    if $k=0$;
    \item  the two parameter family of surfaces defined by
    \begin{eqnarray}\label{K-4a-graphAa}
   f(\rho) &=& \frac{1}{2}
   \left(\pm \arctan\left(\sqrt{\frac{\rho
   ^2+c}{1-c}}\right)+\arctan(\rho )\right)+C,\quad c\in(0,1),\,C\in\R,
\end{eqnarray}
as well as the one parameter family defined by
\begin{eqnarray}\label{K-4b-graphAa} 
f(\rho)&=&\frac{1}{2}\arctan(\rho)+C,\quad C\in\R,
\end{eqnarray}
if $k=-4$;
\item the two parameter family defined by
\begin{equation}\label{K-0-4a-graphAa}
    f(\rho) =\frac{1}{2}\arctan(\rho)\mp\frac{1}{2}\arctan (w)\pm\left\{\begin{matrix}
\dfrac{\sqrt{c_1}}{2} \arctan\left( \dfrac{w}{\sqrt{c_1}} \right) + C, & c_1 > 0 \\
\dfrac{c_1}{4\sqrt{-c_1}} \ln\left| \dfrac{w - \sqrt{-c_1}}{w + \sqrt{-c_1}} \right| + C, & c_1 < 0,
\end{matrix}\right.,
\end{equation}
where $C\in\R$, $c_1=\frac{k+4}{k}$,
$$
w=\sqrt{\frac{\left(1+\frac{k}{4}\right)\rho^2+1-c}{c-\frac{k}{4}\rho^2}}
$$
and $c\in\R$ satisfying
\begin{equation}\label{k-cond-1}
c-\frac{k}{4}\rho^2>0,\quad \left(1+\frac{k}{4}\right)\rho^2>c-1,\quad c\neq k/4,   
\end{equation}
if $k\neq 0.$ In the particular case where $c=-k/4\neq 0$ we have
\begin{equation}\label{k-part}
f(\rho)=\pm\frac{1}{4}\sqrt{-c_1}\ln(\rho^2+1)^{1/4}+\frac{1}{2}\arctan\rho+C ,
\end{equation}
for $\rho \neq 0$.
\end{enumerate}

\end{thm}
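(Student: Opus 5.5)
The plan is to turn $K^h_\Sigma=k$, written via \eqref{HGausscurvtlAA}, into a first-order ODE for a well-chosen quantity, solve it by a Bernoulli/separable reduction, and then integrate once more to get $f$.

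Set $g=f'$ and $D=4(\rho^2+1)g^2-4g+1$. The first step is to notice that the numerator of \eqref{HGausscurvtlAA} is essentially $\rho$ times a derivative of $D$. Indeed,
\[
\frac{dD}{d\rho}=8\rho g^2+4g'\bigl(2(\rho^2+1)g-1\bigr),
\]
so
\[
2\rho g'\bigl(2(\rho^2+1)g-1\bigr)=\tfrac{\rho}{2}D'-4\rho^2g^2 .
\]
The remaining terms combine as
\[
-4\rho^2g^2-4g^2+4g-1=-D.
\]
The equation $K^h_\Sigma=k$ therefore becomes
\[
4\Bigl(\tfrac{\rho}{2}D'-D\Bigr)=kD^2, \quad\text{i.e.}\quad D'-\frac{2D}{\rho}=\frac{k}{2\rho}D^2 .
\]
This is a Bernoulli equation, exactly parallel to Case II of Theorem~\ref{thm:H-constK}. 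Substituting $w=1/D$ gives a linear equation whose solution is $1/D=c/\rho^2-k/4$, that is,
\[
D=\frac{\rho^2}{c-\frac{k}{4}\rho^2}.
\]
The admissibility condition $c-\tfrac{k}{4}\rho^2>0$ comes from $D>0$. Note that there is no dilation normalisation here, since $\Aa$ has no dilations, so $k$ remains a free parameter. The values $k=0$ and $k=-4$ must be singled out because they make certain coefficients vanish.

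Next I solve the quadratic $4(\rho^2+1)g^2-4g+1-D=0$ for $g$:
\[
g=\frac{1\pm\sqrt{1-(\rho^2+1)(1-D)}}{2(\rho^2+1)}=\frac{1}{2(\rho^2+1)}\pm\frac{\sqrt{(\rho^2+1)D-\rho^2}}{2(\rho^2+1)}.
\]
The first term integrates to $\tfrac12\arctan\rho$, which explains that summand in every formula. Inserting $D$, the radicand is
\[
\rho^2\,\frac{(1+\frac{k}{4})\rho^2+1-c}{c-\frac{k}{4}\rho^2},
\]
which equals $\rho^2w^2$ with $w$ as in the statement. The positivity of this radicand gives the conditions \eqref{k-cond-1}.

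It remains to integrate $\pm\rho w/(2(\rho^2+1))$ in each case.
\begin{itemize}
\item For $k=0$: with $c\mapsto 1/c$, the integrand is $\rho\sqrt{c\rho^2+c-1}/(2(\rho^2+1))$. The substitution $s=\sqrt{c\rho^2+c-1}$ gives $\rho\,d\rho=s\,ds/c$ and $\rho^2+1=(s^2+1)/c$, so the integral is $\tfrac12\int s^2/(s^2+1)\,ds=\tfrac12(s-\arctan s)$, which yields \eqref{K0-revAA}.
\item For $k=-4$: $w=\sqrt{(1-c)/(c+\rho^2)}$, and the substitution $v=\sqrt{(\rho^2+c)/(1-c)}$ yields an $\arctan$, giving \eqref{K-4a-graphAa}. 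The degenerate choice $c=1$ makes the radicand vanish and gives \eqref{K-4b-graphAa}.
\item For general $k\neq0$: I substitute $w$ itself as the variable. Writing $\rho^2$ as a rational function of $w^2$, I expect
\[
\frac{\rho\,d\rho}{\rho^2+1}=\frac{\text{(const)}\,w\,dw}{(w^2+1)(w^2+c_1)}, \qquad c_1=\frac{k+4}{k}.
\]
Partial fractions then give $\arctan w$ together with either $\arctan(w/\sqrt{c_1})$ or a logarithm, according to the sign of $c_1$. The special value $c=-k/4$ makes $w$ constant, which leads to \eqref{k-part}.
\end{itemize}

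The main obstacle I anticipate is this last substitution and partial-fraction bookkeeping: getting the constants and signs to match \eqref{K-0-4a-graphAa} exactly, and tracking the domain conditions. I would verify the result by differentiating the final formula back.
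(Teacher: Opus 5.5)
Your proposal is correct and follows essentially the paper's route. The paper sets $h=2(\rho^2+1)f'-1$ and $z=h^2+\rho^2$, which is exactly $(\rho^2+1)D$, and obtains the same Bernoulli equation, though with the less tidy linear coefficient $2(2\rho^2+1)/(\rho(\rho^2+1))$. It then integrates using the same $w$-substitution and partial fractions. Your variable $D$ is slightly cleaner, and it automatically contains the paper's separately treated solution $f'=1/(2(\rho^2+1))$ as the case $k=-4$, $c=1$.

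One small point: carrying out the case $c=-k/4$ gives $f=\tfrac12\arctan\rho\pm\tfrac{\sqrt{-c_1}}{4}\ln(\rho^2+1)+C$. So the extra factor $\tfrac14$ in front of $\ln(\rho^2+1)^{1/4}$ in \eqref{k-part} is a slip in the statement, not something your argument should reproduce.
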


\begin{proof}
We put $f'(\rho)=g(\rho)$ in (\ref{HGausscurvtlAA}) to obtain
$$
K_\Sigma^h=4\frac{2\rho(2(\rho^2+1)g(\rho)-1)g'(\rho)-(2g(\rho)-1)^2}{(4\rho^2g^2(\rho)+(2g(\rho)-1)^2)^2}.
$$
Observe that if
$$
2(\rho^2+1)g(\rho)-1=0\iff g(\rho)=\frac{1}{2(\rho^2+1)},
$$
then $K_\Sigma^h=-4$ and therefore we obtain (\ref{K-4b-graphAa})
as a partial solution of the ODE $K_\Sigma^h=-4$.
    Let now $k \in \mathbb{R}$ be a constant and  assume that $h(\rho)=2(\rho^2+1)g(\rho)-1\neq 0$. Then
    $$
    K_\Sigma^h=4\frac{\rho(\rho^2+1)hh'-(h^2+\rho^2)^2+h^4-h^2}{(h^2+\rho^2)^2}.
    $$
   Set next $z=h^2+\rho^2$. Then we have the Bernoulli equation
   $$
   \frac{dz}{d\rho}-\frac{2(2\rho^2+1)}{\rho(\rho^2+1)}z=\frac{k}{2\rho(\rho^2+1)}z^2.
   $$
   When $k=0$,
$$
z=c\rho^2(\rho^2+1), \,\text{and}\,c>0
$$
since $z>0$. Next,
$$
h^2=z-\rho^2=\rho^2(c(\rho^2+1)-1)>0,
$$
therefore $\rho^2>\frac{1}{c}-1.$
We thus have $$h=\pm\rho\sqrt{c(\rho^2+1)-1}\implies f'(\rho)=\frac{\pm\rho\sqrt{c(\rho^2+1)-1}+1}{2(\rho^2+1)}.
$$
The solution to the above ODE is (\ref{K0-revAA}).

\medskip

When $k\neq 0$, the solution to the Bernoulli equation is
   $$
   z=\frac{\rho^2(\rho^2+1)}{c-\frac{k}{4}\rho^2},\quad c\in\R.
   $$
   Since $z\ge 0$ we must have $\frac{k}{4}\rho^2<c$. Also, since
   $$
   h^2=z-\rho^2=\rho^2\frac{\left(1+\frac{k}{4}\right)\rho^2+1-c}{c-\frac{k}{4}\rho^2},
   $$
we must also have $\left(1+\frac{k}{4}\right)\rho^2>c-1.$
We eventually have the ODE
$$
f'(\rho)=\pm\frac{\rho}{2(\rho^2+1)}\sqrt{\frac{\left(1+\frac{k}{4}\right)\rho^2+1-c}{c-\frac{k}{4}\rho^2}}+\frac{1}{2(\rho^2+1)}.
$$
We first examine the particular case where $k=-4$; this gives (\ref{K-4a-graphAa}). Next, in the case $c=-k/4$ we must have $k\in(-4,0)$ and the ODE becomes
$$
f'(\rho)=\pm\frac{1}{2(\rho^2+1)}\left(\rho\sqrt{-\frac{k+4}{k}}+1\right).
$$
Hence we obtain (\ref{k-part}).
For the general case it suffices to calculate the integral
$$
I=\int\frac{\rho}{2(\rho^2+1)}\sqrt{\frac{\left(1+\frac{k}{4}\right)\rho^2+1-c}{c-\frac{k}{4}\rho^2}}\, d\rho:
$$
Explicitly,
\begin{eqnarray*}
I&=&-\frac{1}{2}\int\frac{dw}{1+w^2}+\frac{1}{2}\frac{k+4}{k}\int\frac{dw}{w^2+\frac{k+4}{k}}\\
&=&-\frac{1}{2}\arctan w+I_1
\end{eqnarray*}
and 
$$
I_1=
\begin{cases}
\dfrac{\sqrt{c_1}}{2} \arctan\left( \dfrac{w}{\sqrt{c_1}} \right) + C, & c_1 > 0 \\
\dfrac{c_1}{4\sqrt{-c_1}} \ln\left| \dfrac{w - \sqrt{-c_1}}{w + \sqrt{-c_1}} \right| + C, & c_1 < 0.
\end{cases}.
$$
Here, $c_1=\frac{k+4}{k}$ and the  solution is (\ref{K-0-4a-graphAa}).
\end{proof}
\subsubsection{Constant horizontal mean curvature of surfaces of revolution}
\begin{thm}\label{thm:AA-constH}
Let $f:\R\to\R$ be a $C^2$ function.
The surfaces $\Sigma=\{(a,\rho):a=f(\rho)\}$ that have constant horizontal mean curvature $H_\Sigma^h=h$ are the following:
\begin{enumerate}
    \item the families defined by \eqref{K-4a-graphAa} and \eqref{K-4b-graphAa},
    if $h=0$.
    \item  The two-parameter family of surfaces defined by
    \begin{eqnarray}\label{H=constAA}
        f(\rho) &=& \frac{\arctan \rho}{2} \pm \frac{1}{2}\arccos \left( \frac{A(\rho)}{\sqrt{\rho^2+1}} \right)+g(\rho),
    \end{eqnarray}
    where
    \begin{eqnarray*}
   g(\rho) =\begin{cases}
         \mp \frac{\sqrt{1-c_1^2+2c_1\rho}}{2c_1}+C_1, & \text{if } h= 2 \\
        \mp \frac{\sqrt{1-c_1^2-2c_1\rho}}{2c_1}+C_2 , & \text{if }  h= -2\\
       \mp \frac{h}{4} \frac{1}{\sqrt{\alpha}} {\rm arcsinh} \left( \frac{\sigma}{\sqrt{\Delta/\alpha}} \right) +c_3 , &  \text{if } \alpha>0 \text{ and } \Delta >0 \\
       \mp \frac{h}{4} \frac{1}{\sqrt{\alpha}} {\rm arcosh }\left( \frac{|\sigma|}{\sqrt{|\Delta|/\alpha}} \right) +c_4, & \text{if } \alpha >0, \Delta <0 \text{ and } |\sigma| >\sqrt{|\Delta|/\alpha} \\
       \mp \frac{h}{4} \frac{1}{\sqrt{\alpha}} \ln \left( |\sigma|  \right) +c_5, & \text{if } \alpha >0, \Delta =0 \text{ and } \sigma \neq 0 \\
       \mp \frac{h}{4} \frac{1}{\sqrt{-\alpha}} \arcsin \left( \frac{\sigma \sqrt{-\alpha}}{\sqrt{\Delta}} \right) +c_6, & \text{if } \alpha <0, \Delta>0 \text{ and } |\sigma | < \sqrt{-\Delta/\alpha}
       \end{cases},
    \end{eqnarray*}   
    \end{enumerate}
 where $A(\rho)=-\frac{h}{2}\rho+c_1$, $\alpha =1-\frac{h^2}{4}$, $\beta =c_1h$, $\gamma =1-c_1^2$, $\sigma =\rho+\frac{\beta}{2\alpha}$ and $\Delta=\gamma -\frac{\beta^2}{4\alpha}$, for $\alpha \neq 0$ and $c_1$ and  $\rho \neq 0$ satisfies
$$
\sqrt{\rho^2+1}>\left|c_1-\frac{h}{2}\rho\right|.
$$
\end{thm}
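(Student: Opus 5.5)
We will follow the strategy used for Theorem \ref{thm:AA-constK}: reduce the equation $H^h_\Sigma=h$, via formula \eqref{H_f}, to a first-order ODE that can be integrated explicitly. Put $g=f'$ and
$$
\eta(\rho)=2(\rho^2+1)g(\rho)-1 .
$$
A direct check shows that the denominator becomes a simple expression:
$$
4(\rho^2+1)g^2-4g+1=\frac{\eta^2+\rho^2}{\rho^2+1}.
$$
We will rewrite the numerator of \eqref{H_f} in terms of $\eta$ and $\eta'$, using $g'=\eta'/(2(\rho^2+1))-2\rho g/(\rho^2+1)$. The expectation, guided by the arccos form of \eqref{H=constAA}, is that the whole expression becomes an exact derivative:
$$
H^h_\Sigma=-2\,\frac{d}{d\rho}\Phi,\qquad \Phi(\rho)=\frac{\rho\sqrt{\rho^2+1}}{\sqrt{\eta^2+\rho^2}} .
$$
The sign and normalisation of $\Phi$ are to be fixed by the computation. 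Verifying this divergence-type identity is the main obstacle. I would first test it on the known solutions \eqref{K-4a-graphAa} and \eqref{K-4b-graphAa}, which should have $h=0$. If the guessed form fails, I would instead seek an integrating factor in the variable $\psi$ defined by $\cos\psi=\Phi/\sqrt{\rho^2+1}$, that is, $\tan\psi=\eta/\rho$.

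Once the identity is established, $H^h_\Sigma=h$ integrates to
$$
\Phi=-\frac{h}{2}\rho+c_1=A(\rho).
$$
Solving for $\eta$ gives
$$
\eta^2=\frac{\rho^2\bigl(\rho^2+1-A^2\bigr)}{A^2}.
$$
This forces the constraint $\sqrt{\rho^2+1}>|A(\rho)|$ stated in the theorem. Consequently
$$
f'(\rho)=\frac{1}{2(\rho^2+1)}\pm\frac{\rho\sqrt{\rho^2+1-A^2}}{2(\rho^2+1)A}.
$$
The first term integrates to $\tfrac12\arctan\rho$.

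For the second term, differentiate $\tfrac12\arccos\bigl(A/\sqrt{\rho^2+1}\bigr)$. This produces the part of the integrand with denominator $\rho^2+1$. The remainder is a multiple of $h\big/\sqrt{\rho^2+1-A^2}$, with the coefficient simplifying using $A'=-h/2$. Now
$$
\rho^2+1-A^2=\alpha\rho^2+\beta\rho+\gamma,\qquad \alpha=1-\tfrac{h^2}{4},\ \beta=c_1h,\ \gamma=1-c_1^2 .
$$
So the remainder is the standard integral $\int d\rho/\sqrt{\alpha\rho^2+\beta\rho+\gamma}$. We handle it by completing the square with $\sigma=\rho+\beta/(2\alpha)$ and $\Delta=\gamma-\beta^2/(4\alpha)$. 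The cases $\alpha>0$ with $\Delta>0$, $\Delta<0$, $\Delta=0$, and $\alpha<0$ with $\Delta>0$ give the arcsinh, arccosh, logarithm and arcsin branches of $g(\rho)$ respectively. The degenerate case $\alpha=0$, i.e.\ $h=\pm2$, makes the radicand linear, $1-c_1^2\pm2c_1\rho$, and gives the square-root branches.

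Finally, the case $h=0$ must be handled separately. There $\Phi\equiv c_1$ is constant. We will include the case $\eta\equiv0$ (where the substitution degenerates), which is exactly \eqref{K-4b-graphAa}. Solving $\Phi=c_1$ with $c:=1-c_1^2\in(0,1)$ recovers the family \eqref{K-4a-graphAa}. This will be checked by comparing $f'$ with the ODE obtained in the proof of Theorem \ref{thm:AA-constK} for $k=-4$.
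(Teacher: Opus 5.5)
There is a genuine gap: the first integral you guess is the wrong one. Your set-up is correct, including $4(\rho^2+1)g^2-4g+1=(\eta^2+\rho^2)/(\rho^2+1)$. Substituting $g=(\eta+1)/(2(\rho^2+1))$ turns the numerator of \eqref{H_f} into $\left((\rho^2+1)\rho\eta'+\eta^3-\eta\right)/(2(\rho^2+1)^2)$. This gives
\[
H^h_\Sigma=-2\,\frac{d}{d\rho}\left(\frac{\eta\sqrt{\rho^2+1}}{\sqrt{\eta^2+\rho^2}}\right),
\]
which is precisely the paper's substitution $v=\frac{2(\rho^2+1)g-1}{\sqrt{(2\rho g)^2+(2g-1)^2}}$ with $v'=-h/2$. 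Your $\Phi$ has $\rho$ in the numerator where $\eta$ belongs. Since $\Phi^2+v^2=\rho^2+1$, you picked the complementary quantity: in your own $\psi$-notation it is $\sqrt{\rho^2+1}\cos\psi$ instead of $\sqrt{\rho^2+1}\sin\psi$. Consequently $-2\Phi'$ is not $H^h_\Sigma$. The test you propose already refutes it. For $\eta\equiv 0$, i.e.\ \eqref{K-4b-graphAa}, one has $H^h_\Sigma=0$, but $\Phi=\sqrt{\rho^2+1}$ (for $\rho>0$), which is not constant.

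Everything downstream inherits the error, so the argument does not close as written. From $\Phi=A$ you get $\eta^2=\rho^2(\rho^2+1-A^2)/A^2$, whereas $v=A$ gives $\eta^2=A^2\rho^2/(\rho^2+1-A^2)$. Your $f'$ therefore carries $\sqrt{\rho^2+1-A^2}/A$ where it should carry $A/\sqrt{\rho^2+1-A^2}$.

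For your integrand the arccos splitting you describe fails. Subtracting $\frac{d}{d\rho}\arccos\frac{A}{\sqrt{\rho^2+1}}=\frac{\rho A-(\rho^2+1)A'}{(\rho^2+1)\sqrt{\rho^2+1-A^2}}$ leaves $\frac{\rho(\rho^2+1)-2\rho A^2+(\rho^2+1)AA'}{(\rho^2+1)A\sqrt{\rho^2+1-A^2}}$, which is not a multiple of $1/\sqrt{\rho^2+1-A^2}$. Likewise, for $h=0$ and $c=1-c_1^2$ you would get $f'=\frac{1}{2(\rho^2+1)}\pm\frac{\rho}{2(\rho^2+1)}\sqrt{(\rho^2+c)/(1-c)}$. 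That square root is the reciprocal of the one in the derivative of \eqref{K-4a-graphAa}, so your planned comparison would fail.

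Once $\Phi$ is replaced by $v$, the rest of your plan goes through and coincides with the paper's proof:
\begin{enumerate}
\item the arccos identity leaves exactly $A'/\sqrt{\alpha\rho^2+\beta\rho+\gamma}$, with $A'=-h/2$;
\item completing the square, together with the linear case $h=\pm2$, gives the branches of $g$;
\item $h=0$ yields \eqref{K-4a-graphAa} with $c=1-c_1^2$, and \eqref{K-4b-graphAa} as the non-degenerate case $c_1=0$.
\end{enumerate}
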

\begin{proof}
We will solve $H_\Sigma^h =h$, for a constant $h$. We define the function $g:\R\to \R$ given by $g(\rho)=f'(\rho)$ and then (\ref{H_f}) becomes
\begin{equation}\label{HAA-g}
H_\Sigma^h=-4\rho\frac{\rho g'(\rho)+2g(\rho)\left(2(\rho^2+1)g^2(\rho)-3g(\rho)+1\right)}{((2\rho g(\rho))^2+(2g(\rho)-1)^2)^{3/2}}.
\end{equation}
We first distinguish the case
$$
2(\rho^2+1)g(\rho)=1\iff f(\rho)=\frac{1}{2}\arctan(\rho)+c,\,c\in\R.
$$
Then straightforward calculations show that $H_\Sigma^h=0$.
Next, for $2(\rho^2+1)g(\rho)\neq 1$
    we use the substitution $v:\R\to \R$ given by \begin{eqnarray} \label{CMCODEAAdilationSub1}
        v(\rho) &=& \frac{2 \left(\rho ^2+1\right) g(\rho )-1}{\sqrt{(2\rho\, g(\rho))^2 +(2g(\rho)-1)^2}}.
    \end{eqnarray}
Then equation $H_\Sigma^h=h$ becomes \begin{eqnarray*}
        v' &=&-\frac{h}{2}.
    \end{eqnarray*}
   Hence
    \begin{eqnarray} \label{CMCODEveqh}
        v &=& -\frac{h}{2}\rho+c_1,
    \end{eqnarray}
    for a constant $c_1$. We now let
    \begin{eqnarray}
    w(\rho) &=& 2(\rho^2+1)g(\rho)-1,
\end{eqnarray}
    in \ref{CMCODEAAdilationSub1}. Then \ref{CMCODEveqh} becomes 
    \begin{eqnarray*}
    \frac{w(\rho) \sqrt{\rho^2+1}}{\sqrt{w^2(\rho)+\rho^2}} =-\frac{h}{2}\rho+c_1.
\end{eqnarray*}
    Solving for $w$ we get
    $$
    w^2(\rho)=\frac{\left( -\frac{h}{2}\rho+c_1 \right)^2 \rho^2}{\rho^2+1-\left( -\frac{h}{2}\rho+c_1 \right)^2}.
    $$
   This is positive when
   $$\sqrt{\rho^2+1}>\left|c_1-\frac{h}{2}\rho\right|.
   $$
   Then \begin{eqnarray*}
        w(\rho) &=& \pm \sqrt{ \frac{\left( -\frac{h}{2}\rho+c_1 \right)^2 \rho^2}{\rho^2+1-\left( -\frac{h}{2}\rho+c_1 \right)^2}}.
    \end{eqnarray*}
    We have \begin{eqnarray*}
        f'(\rho) &=& \frac{w(\rho)+1}{2(\rho^2+1)}.
    \end{eqnarray*}
We will compute the integral $\int \frac{w(\rho )}{2(\rho^2+1)}d \rho$.
Observe that for $\sqrt{\rho^2+1}>\left|A(\rho) \right|$ we have \
\begin{eqnarray}
    \frac{d}{d\rho} \arccos \frac{A(\rho)}{\sqrt{\rho^2+1}} &=& \frac{\rho A(\rho) -(1+\rho^2)A'(\rho)}{(1+\rho^2) \sqrt{\rho^2+1-A(\rho)^2}} \nonumber\\
    \frac{\rho A(\rho)}{(\rho^2+1)\sqrt{\rho^2+1-A(\rho)^2}} &=& \frac{A'(\rho)}{\sqrt{\rho^2+1-A(\rho)^2}}+ \frac{d}{d \rho} \arccos \left( \frac{A(\rho)}{\sqrt{\rho^2+1}} \right). \label{arccosDerIden}
\end{eqnarray}
Set $A(\rho) = -\frac{h}{2} \rho +c_1.$
We then have
\begin{eqnarray*}
    \int \frac{w(\rho )}{2(\rho^2+1)} &=& \pm \frac{1}{2} \int \frac{\rho A(\rho)}{(\rho^2+1)\sqrt{\rho^2+1-A(\rho)^2} }d\rho \\
    &\overset{\eqref{arccosDerIden}}{=}& \pm \frac{1}{2} \left( \arccos \left( \frac{A(\rho)}{\sqrt{\rho^2+1}}\right) +\int \frac{A'(\rho)}{\sqrt{\rho^2+1-A(\rho)^2}} d\rho +c_2\right).
\end{eqnarray*}
Using the identity \begin{eqnarray*}
    \arccos \left( \frac{1}{\sqrt{1+x^2}} \right) &=& \arctan (x),
\end{eqnarray*} and setting $c_1=\sqrt{1-c}$
we see that the surfaces that have $h=0$ are given by \eqref{K-4a-graphAa} together with \eqref{K-4b-graphAa}, whic we have showed already.

If $h= \pm2$, we obtain $f(\rho)$ as in \eqref{H=constAA}, where $g(\rho)$ is given by the first two cases.

Now we set $\alpha =1-\frac{h^2}{4}$, $\beta =c_1h$, $\gamma =1-c_1^2$ and $\Delta=\gamma -\frac{\beta^2}{4\alpha}$, for $\alpha \neq 0$, then by using the change of variable $\sigma =\rho+\frac{\beta}{2\alpha}$, we obtain
\begin{eqnarray*}
  A'(\rho)  \int \frac{d\rho}{\sqrt{\rho^2+1-A(\rho)^2}} &=& A'(\rho) \int\frac{d\sigma}{\sqrt{\alpha\sigma^2+\Delta}}
\end{eqnarray*}
We will compute \begin{eqnarray}\label{IntQuadsqrt}
    I &=& A'(\rho) \int\frac{d\sigma}{\sqrt{\alpha\sigma^2+\Delta}} .
\end{eqnarray}
We distinguish the following cases:
\begin{itemize}
    \item Let first $\alpha >0$.
\noindent{Case 1. $\Delta >0$ }
\eqref{IntQuadsqrt} becomes
\begin{eqnarray*}
  \frac{A'(\rho )}{\sqrt{\alpha}} {\rm arsinh} \left( \frac{\sigma}{\sqrt{\Delta/\alpha}} \right)+c_3,\text{ for } \sigma \in \R.
\end{eqnarray*}
\noindent{Case 2. $\delta <0$ }
\eqref{IntQuadsqrt} becomes
\begin{eqnarray*}
     \frac{A'(\rho )}{\sqrt{\alpha}} {\rm arcosh} \left( \frac{|\sigma|}{\sqrt{|\Delta|/\alpha}} \right)+c_4,\text{ for } \left|\sigma\right| >\sqrt{|\Delta|/\alpha}.
\end{eqnarray*}

\noindent{Case 3. $\delta =0$ }
\eqref{IntQuadsqrt} becomes
\begin{eqnarray*}
    \frac{A'(\rho )}{\sqrt{\alpha}} \ln \left|\sigma  \right| +c_5, \text {for } \sigma\neq 0.
\end{eqnarray*}

\item  If $\alpha <0$

\noindent{When $\delta <0$ }
\eqref{IntQuadsqrt} becomes
\begin{eqnarray*}
     \frac{A'(\rho )}{\sqrt{-\alpha}} \arcsin \left( \frac{\sigma \sqrt{-\alpha}}{\sqrt{\Delta}} \right) +c_6,
\end{eqnarray*}
for $\left|\sigma\right| <\sqrt{-\Delta/\alpha}$.
\end{itemize}  
Summing up we obtain \eqref{H=constAA}.     
\end{proof}
Conditions \eqref{K-4a-graphAa}, \eqref{K-4b-graphAa} of Theorem \ref{thm:AA-constH} immediately deduce the following.
\begin{cor}\label{AA-SOR-K0-H-4}
A surface of revolution $\Sigma$ of the form $u(a,\rho)=a-f(\rho)=0$ has zero horizontal mean curvature if and only if it has horizontal Gaussian curvature equal to $-4$.
\end{cor}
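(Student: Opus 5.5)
The plan is to prove the corollary directly, by a pointwise identity linking $K_\Sigma^h+4$ and $H_\Sigma^h$, rather than by matching the solution families of Theorems \ref{thm:AA-constK} and \ref{thm:AA-constH}. Matching families would only be heuristic, because of the different integration constants and sign branches.

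\emph{Common variables.} As in both proofs, set $g=f'$, $h=2(\rho^2+1)g-1$ and $z=h^2+\rho^2$. A direct substitution gives $2g-1=(h-\rho^2)/(\rho^2+1)$ and $2\rho g=\rho(h+1)/(\rho^2+1)$. Hence
$$
4\rho^2g^2+(2g-1)^2=\frac{z}{\rho^2+1}.
$$
This quantity is $\|\nabla_Hu\|^2$ up to the frame normalisation, so $z>0$ exactly away from characteristic points.

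\emph{Expressing $K_\Sigma^h+4$.} From the proof of Theorem \ref{thm:AA-constK} we have $K_\Sigma^h=4\bigl(\rho(\rho^2+1)hh'-z^2+h^4-h^2\bigr)/z^2$. The term $-z^2$ cancels against $+4$, leaving
$$
K_\Sigma^h+4=\frac{4h\,P}{z^2},\qquad P:=\rho(\rho^2+1)h'+h^3-h .
$$
This also covers the excluded case $h\equiv0$, since the same formula specialises correctly there.

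\emph{Expressing $H_\Sigma^h$.} From the proof of Theorem \ref{thm:AA-constH}, $H_\Sigma^h=-2v'$ with $v=h\sqrt{\rho^2+1}/\sqrt z$. I will check this relation from \eqref{HAA-g} in the $h,z$ variables rather than rely on it blindly. Logarithmic differentiation, using $z'=2hh'+2\rho$, gives
$$
v'=\frac{\rho\sqrt{\rho^2+1}}{z^{3/2}}\,P .
$$
Combining the two expressions, we get, for $\rho\neq0$,
$$
K_\Sigma^h+4=-\frac{2h}{\rho\sqrt{(\rho^2+1)z}}\;H_\Sigma^h .
$$

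\emph{Concluding.} If $H_\Sigma^h\equiv0$, the identity forces $K_\Sigma^h\equiv-4$ wherever $\rho\neq0$, and then on all of $\Sigma$ by continuity. Conversely, suppose $K_\Sigma^h\equiv-4$. Then on each open set where $\rho\neq0$, either $h=0$ or $H_\Sigma^h=0$. Where $h\equiv0$ on an interval, the proof of Theorem \ref{thm:AA-constH} already shows $H_\Sigma^h=0$; this is the family \eqref{K-4b-graphAa}. Where $h\neq0$, the identity gives $H_\Sigma^h=0$ directly. Since $\{h\neq0\}$ is open and $H_\Sigma^h$ is continuous, $H_\Sigma^h$ vanishes on the closure of $\{h\neq 0\}$ together with the interior of $\{h=0\}$, which is dense. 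Hence $H_\Sigma^h\equiv0$.

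\emph{Main obstacle.} The delicate step is the algebra for $v'$: it must produce exactly the same bracket $P$ that appears in $K_\Sigma^h+4$, and the sign and normalisation of $H_\Sigma^h=-2v'$ must be consistent. I would verify this by plugging in the plane $a=0$, where $g=0$ and $h=-1$, so $P=0$, and we must recover $H=0$ and $K=-4$. The points $\rho=0$ and characteristic points are handled by continuity.
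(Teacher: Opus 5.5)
Your proof is correct, but it follows a different route from the paper. The paper deduces the corollary in one line by comparing classification lists. The $k=-4$ families \eqref{K-4a-graphAa} and \eqref{K-4b-graphAa} of Theorem \ref{thm:AA-constK} coincide with the $H_\Sigma^h=0$ families of Theorem \ref{thm:AA-constH}, after identifying $c_1=\pm\sqrt{1-c}$.

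You instead prove a pointwise relation. With $P=\rho(\rho^2+1)h'+h^3-h$, one has $K_\Sigma^h+4=4hP/z^2$ and $H_\Sigma^h=-2\rho P/(\sqrt{\rho^2+1}\,z^{3/2})$. So both conditions amount to $P\equiv 0$. Where $h$ vanishes on an open set, $P$ vanishes automatically, and your density argument handles the mixed case.

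Your route needs no integration, works on any open piece of $\Sigma$, and does not depend on how the solution families are parametrized. The paper's route is immediate once the classifications are available, but it inherits their imprecisions. For example, the plane $a=0$ has $K_\Sigma^h=-4$ and $H_\Sigma^h=0$. It corresponds to $c=0$ (equivalently $|c_1|=1$, with the sign branch switching at $\rho=0$), which lies outside the range $c\in(0,1)$ allowed in \eqref{K-4a-graphAa}. Your identity covers it with no special treatment.

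There is one computational slip. Logarithmic differentiation gives $v'=\rho P/(\sqrt{\rho^2+1}\,z^{3/2})$, not $\rho\sqrt{\rho^2+1}\,P/z^{3/2}$; you can check this with $h\equiv 2$ at $\rho=1$. The identity should therefore read $K_\Sigma^h+4=-\frac{2h\sqrt{\rho^2+1}}{\rho\sqrt{z}}\,H_\Sigma^h$. The discrepancy is the positive factor $\rho^2+1$, so none of your conclusions change. Note, though, that the plane test you propose cannot detect an error of this kind, because both sides vanish there.
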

\begin{figure}[ht!]
\centering
\includegraphics[width=0.82\textwidth]{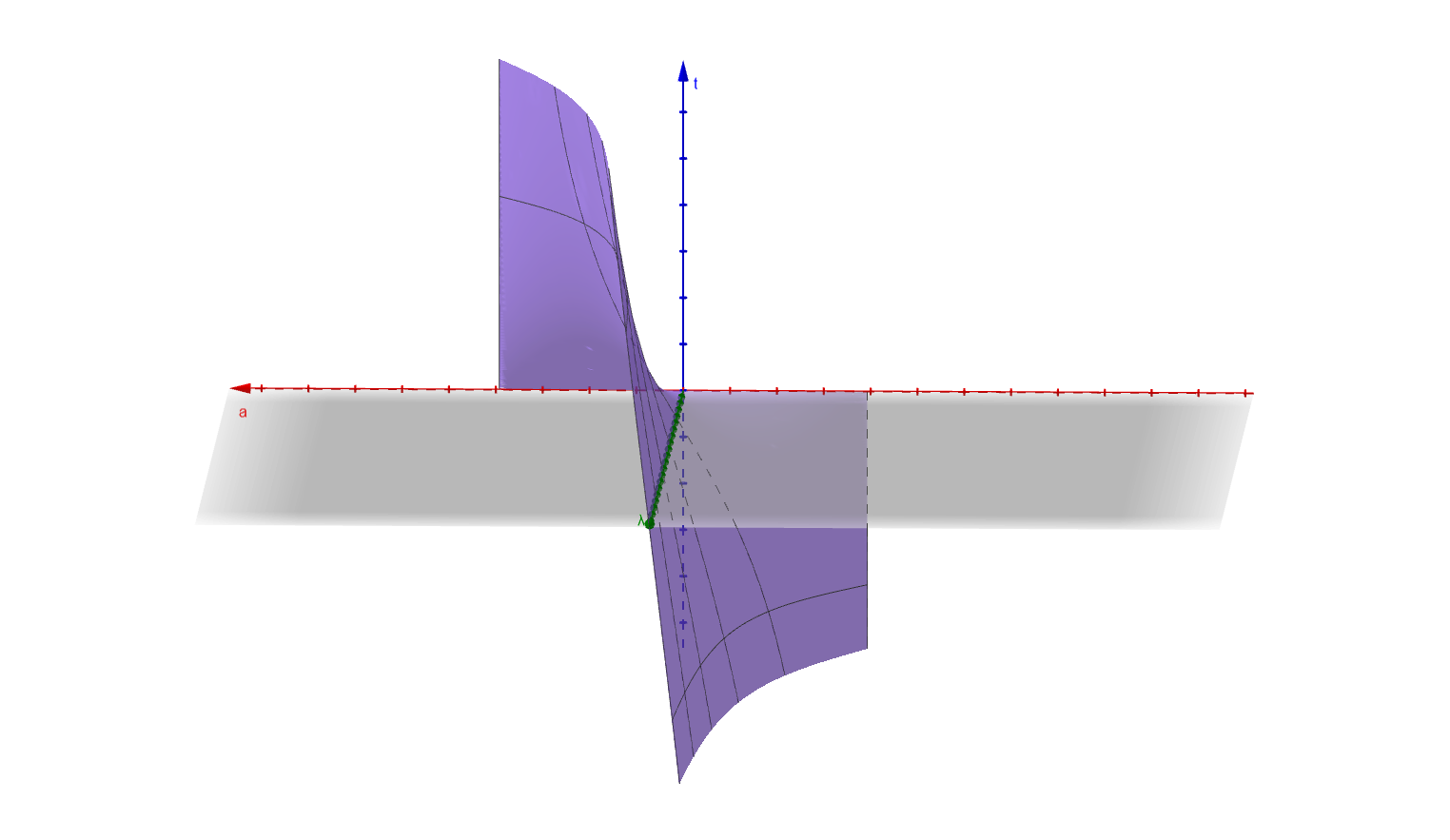}
\caption{Plot of $\Sigma=\{(a,\rho):a=\frac{1}{2}\arctan(\rho)\}$ satifying $H^h_\Sigma=0$ and $K^h_\Sigma=-4$.}
\end{figure}
\newpage
\subsubsection{Constant symplectic distortion of surfaces of revolution}
\begin{thm}\label{thm:AA-constQ}
Let $f:\R\to\R$ be a $C^2$ function.
The surfaces $\Sigma=\{(a,\rho):a=f(\rho)\}$ that have constant symplectic distortion $Q_\Sigma^h=q$ are the following:
\begin{enumerate}
    \item the family defined by \eqref{K0-revAA} if $q=0$;
    \item the two parameter family defined by the integral 
    \begin{eqnarray}
        f(\rho) &=& \int^{\rho}_{\rho_0} \frac{s \left(\left(\frac{c_1}{s}-\frac{q}{2}\right)\pm
   \sqrt{-c_1^2+c_1 q s-\frac{1}{4}
   \left(q^2-4\right) s^2+1}\right)}{\left(s^2+1\right) (2
   c_1-q s)} ds,
    \end{eqnarray}
    for a constant $\rho_0 \neq 0$ if $q\neq 0$.
\end{enumerate}
\end{thm}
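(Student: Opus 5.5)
We follow the pattern of Theorems \ref{thm:H-constQ} and \ref{thm:AA-constK}: write $g=f'$ in \eqref{Q-rev-AA} and reduce $Q_\Sigma^h=q$ to a first order ODE. The numerator of \eqref{Q-rev-AA} coincides, up to the factor $2$ versus $4$, with the numerator of \eqref{HGausscurvtlAA}. Hence, with $h=2(\rho^2+1)g-1$ and $z=h^2+\rho^2=4(\rho^2+1)g^2\rho^2\cdot\frac{1}{\rho^2}\dots$ (more precisely, the denominator is $4(\rho^2+1)g^2-4g+1=(h^2+\rho^2)/(\rho^2+1)$), we get
$$
Q_\Sigma^h=\frac{2K_\Sigma^h}{4}\cdot\frac{\sqrt{h^2+\rho^2}}{\sqrt{\rho^2+1}},
$$
in analogy with Proposition \ref{prop-K-Q-ac}. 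Here $Wu=-1$, $c=2$ and $\|\nabla_Hu\|$ is the square root of that denominator.

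\textbf{Case $q=0$.} This is equivalent to $K_\Sigma^h=0$ (Corollary \ref{ZeroQ-K-Cor}), so we may quote Case $k=0$ of Theorem \ref{thm:AA-constK}, which gives \eqref{K0-revAA}. The branch $h\equiv 0$ gives $K=-4\ne0$, so it is excluded.

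\textbf{Case $q\neq0$.} Mimicking the trigonometric substitution of Theorem \ref{thm:H-constQ}, we introduce the angle-type variable
$$
v=\frac{h}{\sqrt{h^2+\rho^2}}\in(-1,1),
$$
that is, $h=\rho\,v/\sqrt{1-v^2}$ (essentially the variable used in the proof of Theorem \ref{thm:AA-constH}). We then rewrite the numerator $\rho(\rho^2+1)hh'-\rho^2(h^2+\rho^2)-h^2$ in terms of $v$ and $v'$. The goal is to show that $Q_\Sigma^h=q$ becomes a \emph{linear} first order ODE for a suitable function of $v$, most likely $\sqrt{\rho^2+1}\,v$ or $v/\rho$. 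Solving it should give something like
$$
\frac{\sqrt{\rho^2+1}\,v\cdot(\dots)}{\rho}=\frac{c_1}{\rho}-\frac{q}{2}.
$$
The integrand in the statement, with its factor $\frac{c_1}{s}-\frac q2$ and the radicand $1-c_1^2+c_1qs-\frac{q^2-4}{4}s^2=s^2+1-(c_1-\frac q2 s)^2$, strongly suggests the relation
$$
A(\rho):=c_1-\frac{q}{2}\rho=\frac{\sqrt{\rho^2+1}\;\rho}{\sqrt{h^2+\rho^2}}.
$$
In the mean curvature case the analogous combination $h\sqrt{\rho^2+1}/\sqrt{h^2+\rho^2}$ was affine in $\rho$. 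Here we expect the complementary combination $\rho\sqrt{\rho^2+1}/\sqrt{h^2+\rho^2}$ to be affine. We verify this by differentiating and comparing with the ODE, which is a routine but messy check.

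\textbf{Recovering $f$.} We then solve for $h$. From $h^2+\rho^2=\rho^2(\rho^2+1)/A^2$ we get $h=\pm\rho\sqrt{\rho^2+1-A^2}/A$. Then
$$
f'=\frac{h+1}{2(\rho^2+1)}.
$$
We rationalize to match the stated integrand $\frac{s(A/s\pm\sqrt{\cdot})}{(s^2+1)(2c_1-qs)}$, then integrate from $\rho_0$. The sign conventions may need adjusting, for instance by multiplying numerator and denominator by a conjugate.

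\textbf{Expected obstacle.} The main obstacle is finding the right substitution that linearizes the ODE, i.e.\ confirming the affine relation above. We would first try direct differentiation of $\rho\sqrt{\rho^2+1}(h^2+\rho^2)^{-1/2}$ and check that its derivative equals $-\frac{q}{2}$ exactly when $Q_\Sigma^h=q$. If that fails, we would fall back on the Bernoulli approach in $z$, using $K=\frac{2\sqrt{\rho^2+1}}{\sqrt z}\,q$.
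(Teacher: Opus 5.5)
Your plan is essentially the paper's proof. The paper sets $g=f'$, $D:=4(\rho^2+1)g^2-4g+1=\|\nabla_Hu\|^2$ and $v=D^{-1/2}$, and shows that $Q^h_\Sigma=q$ is equivalent to $(\rho v)'=-q/2$. Since $D=(h^2+\rho^2)/(\rho^2+1)$, your conjectured invariant $\rho\sqrt{\rho^2+1}/\sqrt{h^2+\rho^2}$ is exactly $\rho v$. Your treatment of $q=0$ via $Q^h_\Sigma=\tfrac12K^h_\Sigma\sqrt D$ agrees with the paper's use of Corollary \ref{ZeroQ-K-Cor}.

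The gap is that the step carrying the whole proof is never established. You read the affine relation off the stated answer and postpone its verification as the ``main obstacle''. Moreover, the check would fail as you set it up, because the numerator you plan to rewrite is wrong. Up to the factor $(\rho^2+1)^{-2}$, the numerator of \eqref{Q-rev-AA} in terms of $h$ is $\rho(\rho^2+1)hh'-2\rho^2h^2-h^2-\rho^4$, not $\rho(\rho^2+1)hh'-\rho^2(h^2+\rho^2)-h^2$. For $g\equiv0$, $h\equiv-1$, the former gives $-(\rho^2+1)^2$, as it must, while yours does not.

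The missing computation is short if you work with $D$ rather than $h$. From $D'=8\rho g^2+\bigl(8(\rho^2+1)g-4\bigr)g'$ you get $\rho D'-2D=8\rho(\rho^2+1)gg'-4\rho g'-8g^2+8g-2$, which is exactly twice the numerator in \eqref{Q-rev-AA}. Hence $Q^h_\Sigma=(\rho D'-2D)D^{-3/2}=-2\,\frac{d}{d\rho}\bigl(\rho D^{-1/2}\bigr)$, and $Q^h_\Sigma=q$ if and only if $\rho/\sqrt D=c_1-\tfrac q2\rho=:A(\rho)$.

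Your fallback would also have worked, and it derives the relation rather than guessing it. Substitute $K^h_\Sigma=2q\sqrt{\rho^2+1}/\sqrt z$ into the identity $z'-\frac{2(2\rho^2+1)}{\rho(\rho^2+1)}z=\frac{K^h_\Sigma}{2\rho(\rho^2+1)}z^2$ from the proof of Theorem \ref{thm:AA-constK}. This is a Bernoulli equation with exponent $3/2$. Setting $w=z^{-1/2}$ and using the integrating factor $\rho\sqrt{\rho^2+1}$ gives $(\rho\sqrt{\rho^2+1}\,w)'=-q/2$, the same relation.

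Once $A$ is in hand, no rationalisation or conjugate trick is needed. From $h=\pm\rho\sqrt{\rho^2+1-A^2}/A$ you get $f'=\frac{A\pm\rho\sqrt{\rho^2+1-A^2}}{2A(\rho^2+1)}$. This is literally the stated integrand, since $s(\frac{c_1}{s}-\frac q2)=A(s)$, $2c_1-qs=2A(s)$, and the radicand equals $s^2+1-A(s)^2$. You should also record two constraints. First, $A(\rho)^2<\rho^2+1$. Second, $\rho A(\rho)>0$, because $\rho/\sqrt D$ has the sign of $\rho$; squaring to $h^2+\rho^2=\rho^2(\rho^2+1)/A^2$ loses this information.
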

\begin{proof}
    We will solve the ODE $Q_\Sigma^h =q$, for a constant $q$.  If $q=0$, using corollary \ref{ZeroQ-K-Cor} we obtain \eqref{K0-revAA}. 
    
    If $q\neq 0$, we define the function $g:\R\to \R$ given by $g(\rho)=f'(\rho)$ and then (\ref{Q-rev-AA}) becomes
    \begin{eqnarray*}
       Q_\Sigma^h&=& \frac{-4 \rho  g'(\rho )-8 g(\rho )^2+8 g(\rho ) \left(\left(\rho
   ^3+\rho \right) g'(\rho )+1\right)-2}{\left(4 \left(\rho ^2+1\right)
   g(\rho )^2-4 g(\rho )+1\right)^{3/2}}.
    \end{eqnarray*}
    Next, by using the substitution \begin{eqnarray*}
        v(\rho) &=& \frac{1}{\sqrt{4(\rho^2+1)g(\rho)^2-4g(\rho)+1}},
    \end{eqnarray*}
     the ODE $Q_\Sigma^h =q$ becomes
    \begin{eqnarray*}
        \rho v'(\rho) +v(\rho) &=& -\frac{q}{2} \\
        (\rho v)' &=& -\frac{q}{2} \\
        v(\rho) &=& -\frac{q}{2}+\frac{c_1}{\rho }.
    \end{eqnarray*}
 Thus \begin{eqnarray*}
        f'(\rho) &=& \frac{v\pm \sqrt{\rho ^2(1-v^2)+1}}{2 v (\rho ^2 +1 )}.
    \end{eqnarray*}
\end{proof}
 \begin{ex}\label{cc-flask}{\bf CC-sphere and the flask.}
     This is the counterpart of Example \ref{cc-bubble}; however, neither of the two objects that we describe below is a surface of revolution. Details will appear in the forthcoming paper \cite{BPST}.  
     
     The projections of the CC-geodesics that join the origin $O=(0,1,0)$ of the affine-additive group and an arbitrary point $p = (a,\lambda, t)$ are solutions of the isoperimetric problem in the hyperbolic plane, that is, they are lifts of hyperbolic circles of the form
     \begin{eqnarray}
         \label{AA-geod}
         \gamma_{k,\phi}(\tau)&=&\left(   k\tau-\frac{\sqrt{k^2+1}}{k}\arctan\left(\frac{k}{\sqrt{k^2+1}+1}\tan(k\tau)\right),\right. \\
      \notag   && \left. \frac{1}{k^2}(\sqrt{k^2+1}+\cos\phi)(\sqrt{k^2+1}+\cos(2 k\tau)),\right.\\
         \notag&&\left.\frac{1}{k}\sin\phi+\frac{1}{k^2}(\sqrt{k^2+1}+\cos\phi)\sin(2k\tau) \right),
     \end{eqnarray}
      where $k\in\R$,  $\phi\in [0,2\pi] $ and $\tau$ lies in any interval of length $2\pi/|k|$. Note also that $\coth r=\sqrt{k^2+1}/k$ is the curvature of the hyperbolic circle. For a point $p$ such that $\zeta=\lambda+it\neq 0$, $\gamma_k,\phi$ is unique; when $k=0$. the curve $\gamma_{k,\phi}$ is a hyperbolic line emanating from $O$. 
      
      Setting $\tau=R>0$ in (\ref{AA-geod}), the {\it Carnot-Carath\'eodory sphere $S_{cc}^R=S_{cc}(O,R)$ centred at $O$ and of radius $R$} is given by the surface patch
      \begin{eqnarray*}
         \sigma(k,\phi)&=&\left(kR-\frac{\sqrt{k^2+1}}{k}\arctan\left(\frac{k}{\sqrt{k^2+1}+1}\tan(k R)\right) , \right. \\
       \notag  && \left. \frac{1}{k^2}(\sqrt{k^2+1}+\cos\phi)(\sqrt{k^2+1}+\cos(2 kR)),\right.\\
         \notag&&\left.\frac{1}{k}\sin\phi+\frac{1}{k^2}(\sqrt{k^2+1}+\cos\phi)\sin(2k R) \right),
     \end{eqnarray*}
     with $(k,\phi)\in(-\pi/\sinh R,\pi/\sinh R)\times(0,2\pi)$.
     By eliminating $\phi$ using $\cos^2\phi+\sin^2\phi=1$ we obtain:
$$
f(k,\lambda,t)=(\lambda^2-t^2-1)\cos(2kR)+2\lambda t\sin(2kR)-\sqrt{k^2+1}\left((\lambda-1)^2+t^2\right)=0.
$$
We also have that
$$
a=g(k)=kR-\frac{\sqrt{k^2+1}}{k}\arctan\left(\frac{k}{\sqrt{k^2+1}+1}\tan(k R)\right),
$$
and the equation $u(a,\lambda,t)=f(g^{-1}(a),\lambda,t )=0$ defines $S_{cc}^R$ as a hypersurface in $\Aa.$

We next set $s=k\tau$ and $\sinh R=1/|k|$ in \eqref{AA-geod} to obtain the surface patch
\begin{eqnarray*}
   \sigma_R(s,\phi)&=&\left(s-\cosh R\arctan\left(e^{-R}\tan s\right),\right.\\
   &&\left.(\cosh R+\cos\phi\sinh R)(\cosh R+\cos(2s)\sinh R)\right.\\
   &&\left. \sinh R(\sin\phi+(\cosh R+\cos\phi\sinh R)\sin(2s))\right),
\end{eqnarray*}
$(s,\phi)\in(-\pi ,\pi )\times (0,2\pi)$. This defines the {\it flask} $\mathcal{F}(O,R)$; note that it is foliated by horizontal curves $\gamma_\phi(s)=\sigma_R(s,\phi)$ which are lifts of hyperbolic circles of radius $R$. With the aid of Proposition \ref{k-H} we find that the horizontal mean curvature of the flask is constant and equal to the curvature of a hyperbolic circle of radius $R$; that is, $H_{\mathcal{F}(R)}^h=\coth R.$
\begin{rem}
    The expressions for the horizontal Gauss curvature and symplectic distortion for both $S_{cc}^R$ and ${\mathcal F}(O, R)$ are not constant.
\end{rem}
\begin{figure}[ht!]
\centering
\includegraphics[width=1\textwidth]{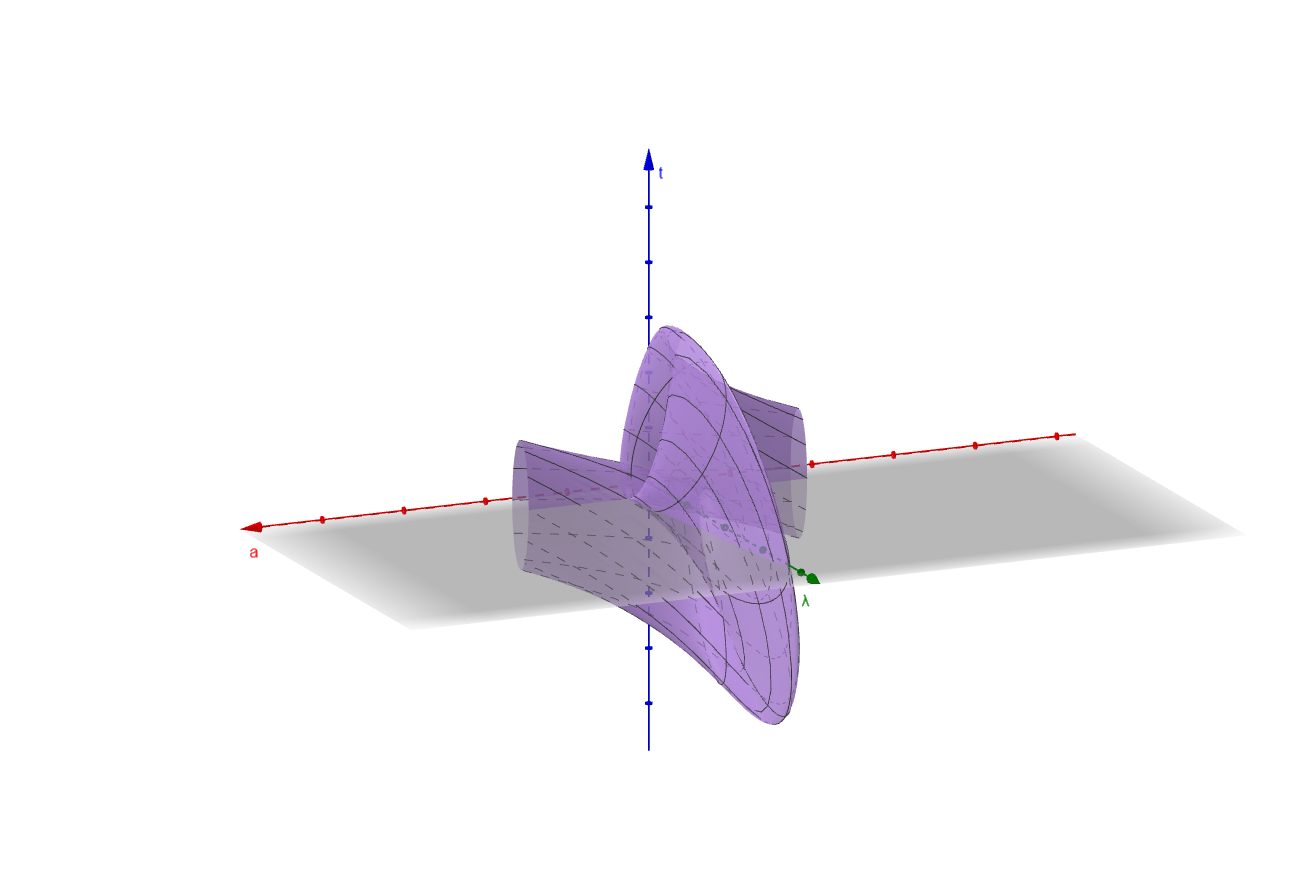}
\caption{Plot of  $\mathcal{F}(O,1)$.}
\end{figure}
 \end{ex}
\newpage

\bibliographystyle{plain}  
\bibliography{Library.bib}    

@article {BPST,
    AUTHOR = {E.~Bubani and I.D.~Platis and G.~Simantiras and D.~Tsolis},
     TITLE = {Sub-{R}iemannian geodesics in the affine-additive groups},
   JOURNAL = {Preprint},
  FJOURNAL = {},
    VOLUME = {Forthcoming},
      YEAR = {2026},
    NUMBER = {},
     PAGES = {},
      ISSN = {},
   MRCLASS = {},
  MRNUMBER = {},
MRREVIEWER = {},
       DOI = {},
       URL = {},
}

@article {B,
    AUTHOR = {Z.M.~Balogh},
     TITLE = {Hausdorff dimension distribution of quasiconformal mappings on
              the {H}eisenberg group},
   JOURNAL = {J. Anal. Math.},
  FJOURNAL = {Journal d'Analyse Math\'{e}matique},
    VOLUME = {83},
      YEAR = {2001},
     PAGES = {289--312},
      ISSN = {0021-7670,1565-8538},
   MRCLASS = {30C65},
  MRNUMBER = {1828495},
MRREVIEWER = {Alexander\ Vasil\cprime ev},
       DOI = {10.1007/BF02790265},
       URL = {https://doi.org/10.1007/BF02790265},
}

@incollection {P,
    AUTHOR = {I.D.~Platis},
     TITLE = {Quasiconformal mappings in the hyperbolic {H}eisenberg group
              and a lifting theorem},
 BOOKTITLE = {Lecture {N}otes of {S}eminario {I}nterdisciplinare di
              {M}atematica. {V}ol. {XV}},
    SERIES = {Lect. Notes Semin. Interdiscip. Mat.},
    VOLUME = {15},
     PAGES = {75--105},
 PUBLISHER = {Semin. Interdiscip. Mat. (S.I.M.), Potenza},
      YEAR = {2020},
      ISBN = {978-88-97478-26-3},
   MRCLASS = {30C65},
  MRNUMBER = {4395000},
MRREVIEWER = {Ilmari\ Kangasniemi},
}

@article {T,
    AUTHOR = {R.~Timsit},
     TITLE = {Geometric construction of quasiconformal mappings in the
              {H}eisenberg group},
   JOURNAL = {Conform. Geom. Dyn.},
  FJOURNAL = {Conformal Geometry and Dynamics. An Electronic Journal of the
              American Mathematical Society},
    VOLUME = {22},
      YEAR = {2018},
     PAGES = {99--140},
      ISSN = {1088-4173},
   MRCLASS = {30C65 (30C75 30L10 32V20)},
  MRNUMBER = {3845546},
MRREVIEWER = {Daniel\ Meyer},
       DOI = {10.1090/ecgd/323},
       URL = {https://doi.org/10.1090/ecgd/323},
}

@book {H,
    AUTHOR = {J.~Heinonen},
     TITLE = {Lectures on analysis on metric spaces},
    SERIES = {Universitext},
 PUBLISHER = {Springer-Verlag, New York},
      YEAR = {2001},
     PAGES = {x+140},
      ISBN = {0-387-95104-0},
   MRCLASS = {30C65 (28A75 28A78 46E35)},
  MRNUMBER = {1800917},
MRREVIEWER = {Christopher\ Bishop},
       DOI = {10.1007/978-1-4613-0131-8},
       URL = {https://doi.org/10.1007/978-1-4613-0131-8},
}

@book {A,
    AUTHOR = {T.~Aubin},
     TITLE = {A course in differential geometry},
    SERIES = {Graduate Studies in Mathematics},
    VOLUME = {27},
 PUBLISHER = {American Mathematical Society, Providence, RI},
      YEAR = {2001},
     PAGES = {xii+184},
      ISBN = {0-8218-2709-X},
   MRCLASS = {53-01 (58-01)},
  MRNUMBER = {1799532},
MRREVIEWER = {Liviu\ I.\ Nicolaescu},
       DOI = {10.1090/gsm/027},
       URL = {https://doi.org/10.1090/gsm/027},
}

@article {W,
    AUTHOR = {M.~Williams},
     TITLE = {Geometric and analytic quasiconformality in metric measure
              spaces},
   JOURNAL = {Proc. Amer. Math. Soc.},
  FJOURNAL = {Proceedings of the American Mathematical Society},
    VOLUME = {140},
      YEAR = {2012},
    NUMBER = {4},
     PAGES = {1251--1266},
      ISSN = {0002-9939,1088-6826},
   MRCLASS = {30L10 (30C65 46E35)},
  MRNUMBER = {2869110},
MRREVIEWER = {Leonid\ V.\ Kovalev},
       DOI = {10.1090/S0002-9939-2011-11035-9},
       URL = {https://doi.org/10.1090/S0002-9939-2011-11035-9},
}

@article {Z,
    AUTHOR = {V.A.~Zorich},
     TITLE = {Asymptotic geometry and conformal types of
              {C}arnot-{C}arath\'eodory spaces},
   JOURNAL = {Geom. Funct. Anal.},
  FJOURNAL = {Geometric and Functional Analysis},
    VOLUME = {9},
      YEAR = {1999},
    NUMBER = {2},
     PAGES = {393--411},
      ISSN = {1016-443X,1420-8970},
   MRCLASS = {53C17},
  MRNUMBER = {1692466},
       DOI = {10.1007/s000390050092},
       URL = {https://doi.org/10.1007/s000390050092},
}

@Book{CDPT,
 Author = {L.~Capogna and D.~Danielli and S.D.~Pauls and J.T.~Tyson},
 Title = {An introduction to the {Heisenberg} group and the sub-{Riemannian} isoperimetric problem},
 FSeries = {Progress in Mathematics},
 Series = {Prog. Math.},
 ISSN = {0743-1643},
 Volume = {259},
 ISBN = {978-3-7643-8132-5},
 Year = {2007},
 Publisher = {Basel: Birkh{\"a}user},
 Language = {English},
 zbMATH = {5083840},
 Zbl = {1138.53003}
}

@Misc{G,
 Author = {H.~Gr{\"o}tzsch},
 Title = {{\"U}ber einige {Extremalprobleme} der konformen {Abbildung}. {I}, {II}.},
 Year = {1928},
 Language = {German},
 HowPublished = {Berichte {Leipzig} 80; 367-376, 497-502 (1928).},
 zbMATH = {2576847},
 JFM = {54.0378.01}
}

@Book{Mont,
 Author = {R.~Montgomery},
 Title = {A tour of subriemannian geometries, their geodesics and applications},
 FSeries = {Mathematical Surveys and Monographs},
 Series = {Math. Surv. Monogr.},
 ISSN = {0076-5376},
 Volume = {91},
 ISBN = {0-8218-1391-9},
 Year = {2002},
 Publisher = {Providence, RI: American Mathematical Society (AMS)},
 Language = {English},
 zbMATH = {1731778},
 Zbl = {1044.53022}
}

@book {Lee-Intro,
    AUTHOR = {J.M.~Lee},
     TITLE = {Introduction to smooth manifolds},
    SERIES = {Graduate Texts in Mathematics},
    VOLUME = {218},
   EDITION = {Second},
 PUBLISHER = {Springer, New York},
      YEAR = {2013},
     PAGES = {xvi+708},
      ISBN = {978-1-4419-9981-8},
   MRCLASS = {58-01 (53-01 57-01)},
  MRNUMBER = {2954043},
}

@Book{Chen-Lin-Biharmonic,
 Author = {B.-Y.~Chen and Y.-L.~Ou},
 Title = {Biharmonic submanifolds and biharmonic maps in {Riemannian} geometry},
 ISBN = {978-981-12-1237-6; 978-981-12-1239-0},
 Year = {2020},
 Publisher = {Hackensack, NJ: World Scientific},
 Language = {English},
 DOI = {10.1142/11610},
 zbMATH = {7184925},
 Zbl = {1455.53002},
}

@article {H-ISR,
    AUTHOR = {R.K.~Hladky},
     TITLE = {Isometries of complemented sub-{R}iemannian manifolds},
   JOURNAL = {Adv. Geom.},
  FJOURNAL = {Advances in Geometry},
    VOLUME = {14},
      YEAR = {2014},
    NUMBER = {2},
     PAGES = {319--352},
      ISSN = {1615-715X,1615-7168},
   MRCLASS = {53C17},
  MRNUMBER = {3263430},
MRREVIEWER = {Erlend\ Grong},
       DOI = {10.1515/advgeom-2014-0015},
       URL = {https://doi.org/10.1515/advgeom-2014-0015},
}

@article {B-Thesis,
    AUTHOR = {E.~Bubani},
     TITLE = {Hyperbolicity and quasiconformal maps on the affine-additive group},
   JOURNAL = {Doctoral Dissertation},
  FJOURNAL = {},
    VOLUME = {\url{https://boristheses.unibe.ch/6619/}},
      YEAR = {Universit\"at Bern, 2025},
    NUMBER = {},
     PAGES = {},
      ISSN = {},
   MRCLASS = {},
  MRNUMBER = {},
MRREVIEWER = {},
       DOI = {},
       URL = {},
}

@article {RR-Rot.inv.,
    AUTHOR = {M.~Ritor\'e and C.~Rosales},
     TITLE = {Rotationally invariant hypersurfaces with constant mean
              curvature in the {H}eisenberg group {$\mathbb{H}^n$}},
   JOURNAL = {J. Geom. Anal.},
  FJOURNAL = {The Journal of Geometric Analysis},
    VOLUME = {16},
      YEAR = {2006},
    NUMBER = {4},
     PAGES = {703--720},
      ISSN = {1050-6926,1559-002X},
   MRCLASS = {53C17 (49Q20 53C42)},
  MRNUMBER = {2271950},
MRREVIEWER = {Constantin\ Vernicos},
       DOI = {10.1007/BF02922137},
       URL = {https://doi.org/10.1007/BF02922137},
}

@article{KD-17,
 author = {V.~Kivioja and E.~Le Donne},
 title = {Isometries of nilpotent metric groups},
 fjournal = {Journal de l'{\'E}cole Polytechnique -- Math{\'e}matiques},
 journal = {J. {\'E}c. Polytech., Math.},
 issn = {2429-7100},
 volume = {4},
 pages = {473--482},
 year = {2017},
 language = {English},
 doi = {10.5802/jep.48},
 zbMATH = {6754333},
 Zbl = {1369.22006}
}

@book{D-MLG,
 author = {E.~Le Donne},
 title = {Metric {Lie} groups. {Carnot}-{Carath{\'e}odory} spaces from the homogeneous viewpoint},
 fseries = {Graduate Texts in Mathematics},
 series = {Grad. Texts Math.},
 issn = {0072-5285},
 volume = {306},
 isbn = {978-3-031-98831-8; 978-3-031-98834-9; 978-3-031-98832-5},
 year = {2025},
 publisher = {Cham: Springer},
 language = {English},
 doi = {10.1007/978-3-031-98832-5},
 zbMATH = {8074076}
}

@article {DO-I,
    AUTHOR = {E.~Le Donne and A.~Ottazzi},
     TITLE = {Isometries of {C}arnot groups and sub-{F}insler homogeneous
              manifolds},
   JOURNAL = {J. Geom. Anal.},
  FJOURNAL = {Journal of Geometric Analysis},
    VOLUME = {26},
      YEAR = {2016},
    NUMBER = {1},
     PAGES = {330--345},
      ISSN = {1050-6926,1559-002X},
   MRCLASS = {53C17 (53C60 58D05)},
  MRNUMBER = {3441517},
MRREVIEWER = {Davide\ Vittone},
       DOI = {10.1007/s12220-014-9552-8},
       URL = {https://doi.org/10.1007/s12220-014-9552-8},
}

@InCollection{Gromov,
 Author = {M.~Gromov},
 Title = {Carnot-{Carath{\'e}odory} spaces seen from within},
 BookTitle = {Sub-Riemannian geometry. Proceedings of the satellite meeting of the first European congress of mathematics `Journ\'ees nonholonomes: g\'eom\'etrie sous-riemannienne, th\'eorie du contr\^ole, robotique', Paris, France, June 30--July 1, 1992},
 ISBN = {3-7643-5476-3},
 Pages = {79--323},
 Year = {1996},
 Publisher = {Basel: Birkh{\"a}user},
 Language = {English},
 zbMATH = {953396},
 Zbl = {0864.53025}
}

@article {BTV,
    AUTHOR = {Z.M.~Balogh and J.T.~Tyson and E.~Vecchi},
     TITLE = {Intrinsic curvature of curves and surfaces and a
              {G}auss-{B}onnet theorem in the {H}eisenberg group},
   JOURNAL = {Math. Z.},
  FJOURNAL = {Mathematische Zeitschrift},
    VOLUME = {287},
      YEAR = {2017},
    NUMBER = {1-2},
     PAGES = {1--38},
      ISSN = {0025-5874,1432-1823},
   MRCLASS = {53C17 (52A39 53A35)},
  MRNUMBER = {3694666},
MRREVIEWER = {Davide\ Vittone},
       DOI = {10.1007/s00209-016-1815-6},
       URL = {https://doi.org/10.1007/s00209-016-1815-6},
}

@Book{DC,
 Author = {M.P.~do Carmo},
 Title = {Riemannian geometry. {Translated} from the {Portuguese} by {Francis} {Flaherty}},
 ISBN = {0-8176-3490-8},
 Year = {1992},
 Publisher = {Boston, MA etc.: Birkh{\"a}user},
 Language = {English},
 zbMATH = {52737},
 Zbl = {0752.53001}
}

@misc{BBP-GaussBonnet,
 author = {D.~Barilari and E.~Bellini and A.~Pinamonti},
 title = {Curvature measures and the sub-{Riemannian} {Gauss}-{Bonnet} theorem},
 year = {2025},
 howpublished = {Preprint, {arXiv}:2509.26460 [math.{DG}] (2025)},
 url = {https://arxiv.org/abs/2509.26460},
 arXiv = {arXiv:2509.26460}
}

@article{DV-GaussBonnet,
 author = {M. M.~Diniz and J. M. M.~Veloso},
 title = {Gauss-Bonnet theorem in sub-{Riemannian} {Heisenberg} space ${H}^1$ },
 fjournal = {Journal of Dynamical and Control Systems},
 journal = {J. Dyn. Control Syst.},
 issn = {1079-2724},
 volume = {22},
 number = {4},
 pages = {807--820},
 year = {2016},
 language = {English},
 doi = {10.1007/s10883-016-9338-3},
 zbMATH = {6646291},
 Zbl = {1362.53040}
}

@article{GHVM-GaussBonnet,
 author = {E.~Grong and J.~Hidalgo and S.~Vega-Molino},
 title = {A sub-{Riemannian} {Gauss}-{Bonnet} theorem for surfaces in contact manifolds},
 fjournal = {The Journal of Geometric Analysis},
 journal = {J. Geom. Anal.},
 issn = {1050-6926},
 volume = {35},
 number = {8},
 pages = {33},
 note = {Id/No 243},
 year = {2025},
 language = {English},
 doi = {10.1007/s12220-025-02076-3},
 zbMATH = {8072179}
}

@article{V-GBgeneral,
 author = {J. M. M.~Veloso},
 title = {Gauss-Bonnet theorems for surfaces in sub-{Riemannian} three-dimensional manifolds},
 fjournal = {Journal of Dynamical and Control Systems},
 journal = {J. Dyn. Control Syst.},
 issn = {1079-2724},
 volume = {29},
 number = {3},
 pages = {1055--1076},
 year = {2023},
 language = {English},
 doi = {10.1007/s10883-022-09626-w},
 zbMATH = {7753816},
 Zbl = {1526.53029}
}

@article{Vel-RSCC1,
 author = {J. M. M.~Veloso},
 title = {Riemannian approximation scheme in sub-{Riemannian} {Heisenberg} space {{\(\mathbb{H}^1\)}} and rotation surfaces of constant {Gaussian} curvature},
 fjournal = {Matem{\'a}tica Contempor{\^a}nea},
 journal = {Mat. Contemp.},
 issn = {0103-9059},
 volume = {50},
 pages = {302--320},
 year = {2022},
 language = {English},
 doi = {10.21711/231766362022/rmc5011},
 zbMATH = {7842501},
 Zbl = {1549.53107}
}

@article{Vel-RSCC2,
 author = {J. M. M.~Veloso},
 title = {Rotation surfaces of constant {Gaussian} curvature and mean curvature in sub-{Riemannian} {Heisenberg} space {{\(\mathbb{H}^1\)}}},
 fjournal = {Journal of Geometry},
 journal = {J. Geom.},
 issn = {0047-2468},
 volume = {113},
 number = {2},
 pages = {19},
 note = {Id/No 38},
 year = {2022},
 language = {English},
 doi = {10.1007/s00022-022-00652-4},
 zbMATH = {7565395},
 Zbl = {1505.53046}
}

@article{Massey,
 author = {W. S.~Massey},
 title = {Surfaces of {Gaussian} curvature zero in {Euclidean} 3-space},
 fjournal = {T{\^o}hoku Mathematical Journal. Second Series},
 journal = {T{\^o}hoku Math. J. (2)},
 issn = {0040-8735},
 volume = {14},
 pages = {73--79},
 year = {1962},
 language = {English},
 doi = {10.2748/tmj/1178244205},
 zbMATH = {3186457},
 Zbl = {0114.36903}
}

@article{P-SRS,
 author = {I.D.~Platis},
 title = {Straight ruled surfaces in the {Heisenberg} group},
 fjournal = {Journal of Geometry},
 journal = {J. Geom.},
 issn = {0047-2468},
 volume = {105},
 number = {1},
 pages = {119--138},
 year = {2014},
 language = {English},
 doi = {10.1007/s00022-013-0199-6},
 zbMATH = {6348441},
 Zbl = {1298.53053}
}

@article{M-76,
title = {Curvatures of left invariant metrics on lie groups},
journal = {Advances in Mathematics},
volume = {21},
number = {3},
pages = {293-329},
year = {1976},
issn = {0001-8708},
doi = {https://doi.org/10.1016/S0001-8708(76)80002-3},
url = {https://www.sciencedirect.com/science/article/pii/S0001870876800023},
author = {J.~Milnor}
}
\Addresses

\end{document}